\begin{document}

\title{Quasi-determinant and right eigenvalues of dual quaternion matrices}
\author{ Chen Ling,\footnote{%
    Department of Mathematics, Hangzhou Dianzi University, Hangzhou, 310018, China
    ({\tt
macling@hdu.edu.cn}). This author's work was supported by Natural Science Foundation of China (No. 11971138).}
\and
Liqun Qi\thanks{Department of Applied Mathematics, The Hong Kong Polytechnic University, Hung Hom,
    Kowloon, Hong Kong; Department of Mathematics, Hangzhou Dianzi University, Hangzhou, 310018, China ({\tt maqilq@polyu.edu.hk}).}
}
\date{\today}
\maketitle

\begin{abstract}
Dual quaternion/complex matrices have important applications in brain science and multi-agent formation control. In this paper, we first study some basic properties of determinants of dual complex matrices, including Sturm theorem and Bloomfield-Watson inequality for dual complex matrices. Then, we show that every eigenvalue of a dual complex matrix must be the root of the characteristic polynomial of this matrix. With the help of the determinants of dual complex matrices, we introduce the concept of quasi-determinants of dual quaternion matrices, and show that every right eigenvalue of a dual quaternion matrix must be the root of the quasi-characteristic polynomial of this matrix, as well as the quasi-determinant of a dual quaternion Hermitian matrix is equivalent to the product of the square of the magnitudes of all eigenvalues. Our results are helpful for the further study of dual quaternion matrix theory, and their applications.

\medskip


  \textbf{Key words.} Dual complex matrix, dual quaternion matrix, right eigenvalue, quasi-determinant, quasi-characteristic polynomial, Bloomfield-Watson inequality.

\end{abstract}

\renewcommand{\Re}{\mathds{R}}
\newcommand{\rank}{\mathrm{rank}}
\renewcommand{\span}{\mathrm{span}}
\newcommand{\X}{\mathcal{X}}
\newcommand{\A}{\mathcal{A}}
\newcommand{\I}{\mathcal{I}}
\newcommand{\B}{\mathcal{B}}
\newcommand{\C}{\mathcal{C}}
\newcommand{\OO}{\mathcal{O}}
\newcommand{\e}{\mathbf{e}}
\newcommand{\0}{\mathbf{0}}
\newcommand{\dd}{\mathbf{d}}
\newcommand{\ee}{\mathbf{e}}
\newcommand{\ii}{\mathbf{i}}
\newcommand{\jj}{\mathbf{j}}
\newcommand{\kk}{\mathbf{k}}
\newcommand{\va}{\mathbf{a}}
\newcommand{\vb}{\mathbf{b}}
\newcommand{\vc}{\mathbf{c}}
\newcommand{\vg}{\mathbf{g}}
\newcommand{\vr}{\mathbf{r}}
\newcommand{\vt}{\rm{vec}}
\newcommand{\vx}{\mathbf{x}}
\newcommand{\vy}{\mathbf{y}}
\newcommand{\vu}{\mathbf{u}}
\newcommand{\vv}{\mathbf{v}}
\newcommand{\y}{\mathbf{y}}
\newcommand{\vz}{\mathbf{z}}
\newcommand{\T}{\top}

\newtheorem{Thm}{Theorem}[section]
\newtheorem{Def}[Thm]{Definition}
\newtheorem{Ass}[Thm]{Assumption}
\newtheorem{Lem}[Thm]{Lemma}
\newtheorem{Prop}[Thm]{Proposition}
\newtheorem{Cor}[Thm]{Corollary}
\newtheorem{example}[Thm]{Example}
\newtheorem{ReK}[Thm]{Remark}
\newtheorem{PropT}[Thm]{Property}

\section{Introduction}\label{Introd}
Dual numbers, dual complex numbers, dual quaternions, their vectors and matrices, as well as their applications have a long history. Dual numbers were introduced by Clifford \cite{Cl73} in 1873, then E. Study \cite{St91} applied dual numbers to measure the relative position of skew lines in three-dimensional space and defined a dual angle, whose standard and infinitesimal parts represent the angle and distance between lines, respectively. These started the study and applications of dual number algebra in kinematics, dynamics and robotics \cite{An98,An12,Fi98,GL87,PS07,PV09,Ud21,Wa21}.  Later, dual complex numbers, as a generalization of dual numbers, have found applications in spatial mechanisms \cite{CT96,CT97}. Recently, dual complex matrices and their singular value decomposition theory have found applications in brain science \cite{QACLL22,WDW24}. It is worth pointing out that, because dual quaternions can be used to represent coordinately a combination of rigid body's rotation and displacement, during the past decades, they have fruitful applications in engineering fields, such as 3D computer graphics, robotics control and computer vision \cite{BLH19,CKJC16,Da99,MKO14,MXXLY18,PV10,TRA11,WYL12,WZ14}. More recently, the eigenvalue theory of dual quaternion matrices have also found applications in  hand-eye calibration and multi-agent formation control, see \cite{CLQY24,QC24,QWL22}.

With the increasingly widespread application of dual quaternion matrices in engineering fields, research on related algebraic fundamental theories and computational methods has received increasing attention. Qi et al \cite{QLY21} introduced a total order for dual numbers, and extended $1$-norm, $\infty$-norm, and $2$-norm to dual quaternion vectors. In \cite{QL21}, it was shown an $m \times m$ dual quaternion Hermitian  matrix has exactly $m$ eigenvalues, which are all dual numbers. Singular value decomposition of qual quaternion matrices was also established in \cite{QL21}. Ling et al \cite{LHQ22} further studied the minimax principle and Weyl's type monotonicity inequality for singular values of dual quaternion matrices, as well as the polar decomposition and best low-rank approximations for dual quaternion matrices. In \cite{LQY22}, Ling et al  presented the minimax principle for eigenvalues of dual quaternion Hermitian matrices and Fan-Hoffman inequality for singular values of dual quaternion matrices, and studied the generalized inverses of dual quaternion matrices. In \cite{LHQF23}, von Neumann type trace inequality and Hoffman-Wielandt type inequality for general dual quaternion matrices were established. Qi et al \cite{QC23} studied the Jordan form of dual complex matrices with diagonalizable standard part, and the Jordan form of dual complex matrices with a Jordan block standard part. Based on these, the authors further proposed a description of the eigenvalues of a general square dual complex matrix in \cite{QC23}. In \cite{QC24}, Qi et al introduced the determinant, the characteristic polynomial and the supplement matrices for dual Hermitian matrices, and presented a practical method for computing eigenvalues of dual Hermitian matrices.

Determinant is an important concept and tool in linear algebra, which can be used to study the linear equation systems, the eigenvalues and the subspace theory of matrices.  Due to the noncommutative multiplication of dual quaternions, the study on the determinant theory of dual quaternion matrices is very difficult \cite{Kyr12, LSZ20, SWC11}. In this paper, we first study the determinant properties of dual complex matrices, and then introduce the concept of quasi-determinant of dual quaternion matrices. Based upon these, the relationship between the right eigenvalues of dual quaternion matrices and their quasi-characteristic polynomials are studied.

Our paper consists of six sections. In Section \ref{Prelim}, we first present some preliminary knowledge on quaternions, dual numbers, dual quaternions, and dual quaternion matrices.
Then, in Section \ref{Deter-DNMat}, we introduce the concept of determinant of dual complex matrices, and study some basic properties of the determinant of dual complex matrices.
In Section \ref{Eigen-DNMat}, we first study the relationship between the eigenvalues of dual complex matrices and their characteristic polynomials. Then we show that, the eigenvalue of a dual complex matrix must be the root of the characteristic polynomial of this matrix, however, a characteristic root of a dual number matrix may not necessarily be its eigenvalue.
With help of the singular value decomposition of dual complex matrices and the minimax principle of dual complex Hermitian matrices, we also obtain several important properties of the determinant of dual complex matrices, including the important Bloomfield-Watson inequality. After introducing the concept of the quasi-determinant of dual quaternion matrices, in Section \ref{DET-DQ}, we show that the right eigenvalue of a dual quaternion matrix must be the root of the quasi-characteristic polynomial of this matrix. Furthermore, we also show that the determinant of a dual quaternion Hermitian matrix is equivalent to the product of the square of the magnitudes of all eigenvalues. Finally, we complete this paper by making some final remarks in Section \ref{Conclusion}.

\section{Preliminaries}\label{Prelim}
\subsection{Quaternions}\label{Quat}
Let $\mathbb{R}$ denote the field of the real numbers, and denote by $\mathbb Q$ the four-dimensional vector space of the quaternions over $\mathbb{R}$. A quaternion $q\in \mathbb{Q}$ has the form
$q = q_0 + q_1\ii + q_2\jj + q_3\kk$, where $q_0, q_1, q_2, q_3\in \mathbb{R}$, $\ii, \jj$ and $\kk$ are three imaginary units of quaternions, satisfying $\ii^2 = \jj^2 = \kk^2 =\ii\jj\kk = -1$, $\ii\jj = -\jj\ii = \kk$, $\jj\kk = - \kk\jj = \ii$, $\kk\ii = -\ii\kk = \jj$. The real part of $q$ is ${\rm Re}(q):= q_0$, 
the imaginary part of $q$ is ${\rm Im}(q):= q_1\ii + q_2\jj +q_3\kk$, the conjugate of $q$ is $\bar q := q_0 -q_1\ii -q_2\jj -q_3\kk$, and the norm of $q$ is $|q|:=\sqrt{\bar qq}=\sqrt{q_0^2+q_1^2+q_2^2+q_3^2}$.
A quaternion is called imaginary if its real part is zero. It is obvious that, if $q_2=q_3=0$, then $q$ is a complex number; if ${\rm Im}(q)=0$, i.e., $q_1=q_2=q_3=0$, then $q$ is a real number. The field of complex numbers is denoted by $\mathbb{C}$. It is obvious that $\mathbb{R}\subset\mathbb{C}\subset\mathbb{Q}$. The multiplication of quaternions satisfies the distribution law, but is noncommutative. 

\subsection{Dual  numbers}
The set of dual numbers is denoted by $\widehat{\mathbb{R}}$. A dual number $q\in \widehat{\mathbb{R}}$ has the form $a = a_{\sf st} + a_{\sf in}\epsilon$, where $a_{\sf st},a_{\sf in}\in \mathbb{R}$,  and $\epsilon$ is the infinitesimal unit, satisfying $\epsilon^2 = 0$ but $\epsilon\neq 0$.  We call $a_{\sf st}$ the standard part of $a$, and $a_{\sf in}$ the infinitesimal part of $a$.   If $a_{\sf st}\neq 0$, then we say that $a$ is appreciable, otherwise, we say that $a$ is infinitesimal. The infinitesimal unit $\epsilon$ is commutative in multiplication with real numbers.

In \cite{QLY21}, a total order was introduced for dual numbers. Suppose $a = a_{\sf st} + a_{\sf in}\epsilon, b = b_{\sf st} + b_{\sf in}\epsilon \in \widehat{\mathbb{R}}$.  We have $b < a$ if $b_{\sf st} < a_{\sf st}$, or $b_{\sf st} = a_{\sf st}$ and $b_{\sf in} < a_{\sf in}$.  It is obvious that $b = a$, if and only if $b_{\sf st} = a_{\sf st}$ and $b_{\sf in} = a_{\sf in}$.   Thus, if $a > 0$, we say that $a$ is a positive dual number; and if $a \ge 0$, we say that $a$ is a nonnegative dual number.  Denote the set of nonnegative dual numbers by $\widehat{\mathbb{R}}_+$, and the set of positive dual numbers by $\widehat{\mathbb{R}}_{++}$.

The absolute value \cite{QLY21} of $a \in \widehat{\mathbb{R}}$ is defined by
\begin{equation} \label{e5}
|a| = \left\{ \begin{array}{ll}|a_{\sf st}| + {\rm sgn}(a_{\sf st})a_{\sf in}\epsilon, & {\rm if~}  a_{\sf st} \not = 0, \\
|a_{\sf in}|\epsilon, & {\rm otherwise},  \end{array}  \right.
\end{equation}
where for any $u \in \mathbb R$,
$${\rm sgn}(u) =  \left\{ \begin{aligned} 1, & \ {\rm if}\  u > 0, \\ 0, &   \ {\rm if}\  u = 0, \\
-1, &   \ {\rm if}\  u < 0.  \end{aligned}  \right. $$
For given $a = a_{\sf st} + a_{\sf in}\epsilon, b = b_{\sf st} + b_{\sf in}\epsilon \in \widehat{\mathbb{R}}$, we define
\begin{equation} \label{e1}
 a+ b =a_{\sf st}+b_{\sf st} +(a_{\sf in}+b_{\sf in})\epsilon,~~~~ab = a_{\sf st}b_{\sf st} +(a_{\sf st}b_{\sf in}+a_{\sf in} b_{\sf st})\epsilon.\end{equation}
If $a$ is appreciable, then $a$ is nonsingular and $a^{-1} = a_{\sf st}^{-1} - a_{\sf st}^{-2}a_{\sf in} \epsilon$, which satisfies $aa^{-1}=1$. If $a$ is infinitesimal, then $a$ is not nonsingular. Furthermore, for any positive integer $k$, we have
 \begin{equation} \label{e2}
a^k = a_{\sf st}^k + ka_{\sf st}^{k-1}a_{\sf in} \epsilon.
\end{equation}
If $a$ is appreciable, then $k$ in (\ref{e2}) can be taken as a negative integer.
\begin{Prop}\label{P6.5}\cite{QLY21}
Suppose $a,b\in \widehat{\mathbb{R}}_+$. Then, we have $ab \in \widehat{\mathbb{R}}_{+}$. 







\end{Prop}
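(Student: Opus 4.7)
The plan is to unfold the definition of the total order on $\widehat{\mathbb{R}}$ and do a short case analysis on the standard parts. Writing $a = a_{\sf st} + a_{\sf in}\epsilon$ and $b = b_{\sf st} + b_{\sf in}\epsilon$, the hypothesis $a,b \in \widehat{\mathbb{R}}_+$ means that, for each of $a$ and $b$, either the standard part is strictly positive, or the standard part is zero and the infinitesimal part is nonnegative. I would then appeal to the multiplication formula $ab = a_{\sf st}b_{\sf st} + (a_{\sf st}b_{\sf in} + a_{\sf in}b_{\sf st})\epsilon$ from (\ref{e1}) and test $ab \ge 0$ against the same definition.

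The case split is natural. First, if $a_{\sf st} > 0$ and $b_{\sf st} > 0$, then the standard part of $ab$ is $a_{\sf st}b_{\sf st} > 0$, so $ab > 0$. Second, if exactly one of them has zero standard part, say $a_{\sf st} > 0$ and $b_{\sf st} = 0$ with $b_{\sf in} \ge 0$, then the standard part of $ab$ is $0$ and its infinitesimal part equals $a_{\sf st}b_{\sf in} + a_{\sf in}\cdot 0 = a_{\sf st}b_{\sf in} \ge 0$, giving $ab \in \widehat{\mathbb{R}}_+$; the symmetric subcase is identical. Finally, if both standard parts vanish, then both the standard and infinitesimal parts of $ab$ are zero, so $ab = 0 \in \widehat{\mathbb{R}}_+$.

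Collecting all the cases gives $ab \in \widehat{\mathbb{R}}_+$. There is no real obstacle here: the only thing to be careful about is to read off the standard and infinitesimal parts of $ab$ correctly from (\ref{e1}) and to match them against the two-part definition of the total order given just above (\ref{e5}). It may be worth remarking in passing that the proof shows slightly more, namely that $ab > 0$ whenever both $a$ and $b$ are appreciable and positive, which is the fact one actually uses later when manipulating determinants built from positive dual scalars.
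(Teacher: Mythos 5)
Your proof is correct. The paper itself states this proposition with a citation to [QLY21] and offers no proof of its own, so there is nothing in the source text to compare your argument against; your direct case analysis, unfolding the total order and the multiplication rule from (\ref{e1}), is the natural and complete argument, and your closing observation that the product of two appreciable positives is again appreciable and positive is also correct and follows from the first case.
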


\subsection{Dual quaternions and dual quaternion matrices}
Denote by $\widehat{\mathbb Q}$ the set of dual quaternions.
A dual quaternion $q$ has the form $q = q_{\rm st} + q_{\rm in}\epsilon$,
where $q_{\rm st}, q_{\rm in} \in \mathbb {Q}$ are the standard part and the infinitesimal part of $q$, respectively.  If $q_{\rm st} \not = 0$, then we say that $q$ is appreciable, otherwise, we say that $q$ is infinitesimal. The conjugate of $q= q_{\rm st} + q_{\rm in}\epsilon$ is $\bar q = \bar q_{\rm st} + \bar q_{\rm in}\epsilon$. See \cite{BK20, CKJC16, Ke12}. We can derive that $q$ is invertible if and only if  $q$ is appreciable. In this case, we have $q^{-1} = q_{\rm st}^{-1} - q_{\rm st}^{-1}q_{\rm in} q_{\rm st}^{-1} \epsilon$. If $q_{\rm st}, q_{\rm in} \in \mathbb {C}$, we say that $q$ is a dual complex number. The set of dual complex numbers is denoted by $\widehat{\mathbb{C}}$. It is obvious that $\widehat{\mathbb R}\subset\widehat{\mathbb C}\subset \widehat{\mathbb Q}$. The magnitude of $q\in \widehat{\mathbb Q}$ is defined as
\begin{equation} \label{e7}
\displaystyle|q| := \left\{ \begin{array}{ll} |q_{\rm st}| +\displaystyle {(q_{\rm st}\bar q_{\rm in}+q_{\rm in} \bar q_{\rm st}) \over 2|q_{\rm st}|}\epsilon, & \ {\rm if}\  q_{\rm st} \not = 0, \\
|q_{\rm in}|\epsilon, &  \ {\rm otherwise},
\end{array} \right.
\end{equation}
which is a dual number. 

Denote by $\widehat{\mathbb Q}^{m\times n}$ the set of $m\times n$ dual quaternion matrices. Then $A\in \widehat{\mathbb Q}^{m\times n}$ can
be written as $A = A_{\rm st} + A_{\rm in}\epsilon$, where $A_{\rm st}, A_{\rm in}\in \mathbb{Q}^{m\times n}$ are the standard part and the infinitesimal part of $A$, respectively.  If $A_{\rm st} \not = O$, we say that $A$ is appreciable, otherwise, we say that $A$ is infinitesimal. 
It is obvious that when $n=1$, dual quaternion matrix $A$ reduces to the dual quaternion column vector with $m$ components. In this case, $\widehat{\mathbb Q}^{m\times 1}$ is abbreviated as $\widehat{\mathbb Q}^m$. For given ${\bf u}=(u_1,u_2,\ldots,u_m)^\top$ and ${\bf v}=(v_1,v_2,\ldots,v_m)^\top$ in $\widehat{\mathbb Q}^m$, denote by $\langle {\bf u},{\bf v}\rangle$ the dual quaternion-valued inner product, i.e., $\langle {\bf u},{\bf v}\rangle=\sum_{i=1}^m\bar v_iu_i$. Accordingly, the orthogonality and orthonormality of vectors in $\widehat{\mathbb Q}^m$ can be defined similarly to vectors in $\mathbb{Q}^m$.
Denote by $[{\bf 0}]$ the set of all infinitesimal vectors in $\widehat{\mathbb Q}^m$.  
For given $A = A_{\sf st} + A_{\sf in} \epsilon = (a_{ij}) \in {\widehat{\mathbb {Q}}}^{m \times n}$, the Frobenius norm of $A$, which is a dual number, is defined by
\begin{equation}\label{FNorm-DQM}
\|A \|_F = \left\{\begin{array}{ll}\displaystyle\sqrt{\sum_{i=1}^m \sum_{j=1}^n |a_{ij}|^2}, & \ {\rm if}\ A_{\sf st} \not = O, \\
\|A_{\sf in}\|_F\epsilon, & \ {\rm otherwise.} \end{array}\right.
\end{equation}

For given $A\in \widehat{\mathbb Q}^{m\times n}$, the transpose of $A$ is denoted as $A^\top = (a_{ji})$, the conjugate of $A$ is denoted as ${\bar A} = (\bar a_{ij})$, and the conjugate transpose of $A$ is denoted as $A^*=(\bar a_{ji})=\bar A^\top$. It is obvious that $A^\top = A_{\rm st}^\top + A_{\rm in}^\top\epsilon$, $\bar A = \bar A_{\rm st}+\bar A_{\rm in}\epsilon$ and $A^* = A_{\rm st}^* + A_{\rm in}^*\epsilon$. A square matrix $A\in \widehat{\mathbb Q}^{m\times m}$ is called nonsingular (invertible) if $AB = BA = I_m$ for some $B\in \widehat{\mathbb Q}^{m\times m}$. In this case, we denote $A^{-1} = B$, and have
\begin{equation}\label{Inver-mat}
A^{-1}=A_{\sf st}^{-1}-A_{\sf st}^{-1}A_{\sf in}A_{\sf st}^{-1}\epsilon.
\end{equation}
A square matrix $A\in \widehat{\mathbb Q}^{m\times m}$ is called normal if $AA^*=A^*A$, Hermitian if $A^*=A$, and unitary if $A$ satisfies $A^*A=I_m$. Similarly, we say that $A\in \widehat{\mathbb C}^{m\times k} ~(k\leq m)$ is partially unitary, if $A$ satisfies $A^*A=I_k$. We have $(AB)^{-1}=B^{-1}A^{-1}$ if $A$ and $B$ are nonsingular, and $(A^*)^{-1}=(A^{-1})^*$ if $A$ is nonsingular. It is easy to see that a square matrix $U=[{\bf u}_1,{\bf u}_2,\ldots,{\bf u}_m]\in \widehat{\mathbb Q}^{m\times m}$ is unitary if and only if $\{{\bf u}_1,{\bf u}_2,\ldots,{\bf u}_m\}$ form an orthonormal basis of $\widehat{\mathbb Q}^m$, i.e., it is orthonormal and any vector ${\bf x}$ in $\widehat{\mathbb Q}^m$ can be written as ${\bf x}=\sum_{i=1}^m{\bf u}_i\alpha_i$ for some $\alpha_1,\alpha_2,\ldots,\alpha_m\in \widehat{\mathbb Q}$. A dual quaternion  Hermitian matrix $A \in \widehat{\mathbb {Q}}^{m \times m}$ is called positive semidefinite if $\vx^\top A\vx \ge 0$ for any $\vx \in \widehat{\mathbb {Q}}^m$; $A$ is called positive definite if for any $\vx \in \widehat{\mathbb {C}}^m$ with $\vx$ being appreciable,  we have $\vx^*A\vx > 0$ and is appreciable. For given $A,B \in \widehat{\mathbb {Q}}^{m \times m}$, if there is an $m\times m$ invertible matrix $P\in \widehat{\mathbb {Q}}^{m \times m}$ such that $A=P^{-1}BP$, then we say that $A$ and $B$ are similar, and denote $A\sim B$.
\begin{Prop}
\label{Prop3.4}
(Corollary 3.10 in \cite{LHQ22}) Suppose that $U \in \widehat{\mathbb{Q}}^{m \times k}$ is partially unitary, $k < m$.    Then there is a $V \in \widehat{\mathbb{Q}}^{m \times (m-k)}$ such that $(U, V) \in \widehat{\mathbb{Q}}^{m \times m}$ is unitary.
\end{Prop}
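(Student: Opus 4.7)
The plan is to construct $V$ in two stages, first fixing its standard part by extending a partial quaternion isometry to a unitary quaternion matrix, and then correcting by an infinitesimal part chosen so that all the first-order orthogonality conditions are satisfied.

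Write $U = U_{\sf st} + U_{\sf in}\epsilon$. Expanding the relation $U^*U = I_k$ in its standard and infinitesimal parts gives
\[
U_{\sf st}^*U_{\sf st} = I_k, \qquad U_{\sf st}^*U_{\sf in} + U_{\sf in}^*U_{\sf st} = O,
\]
so $U_{\sf st}\in\mathbb{Q}^{m\times k}$ is partially unitary in the classical quaternion sense. Using the standard Gram--Schmidt procedure over $\mathbb{Q}$ (extending the $k$ orthonormal columns of $U_{\sf st}$ to an orthonormal basis of $\mathbb{Q}^m$), pick $V_0\in\mathbb{Q}^{m\times(m-k)}$ such that $(U_{\sf st},V_0)\in\mathbb{Q}^{m\times m}$ is unitary. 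This will serve as the standard part of $V$.

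Now look for $V = V_0 + V_1\epsilon$ with $V_1\in\mathbb{Q}^{m\times(m-k)}$ to be determined. The conditions required for $(U,V)$ to be unitary split into the standard parts, which are already satisfied, and the infinitesimal parts
\[
U_{\sf st}^*V_1 + U_{\sf in}^*V_0 = O, \qquad V_0^*V_1 + V_1^*V_0 = O.
\]
Because $(U_{\sf st},V_0)$ is a unitary quaternion matrix, the columns of $U_{\sf st}$ and $V_0$ together span $\mathbb{Q}^m$, so I can write $V_1 = U_{\sf st}X + V_0 Y$ uniquely for some $X\in\mathbb{Q}^{k\times(m-k)}$ and $Y\in\mathbb{Q}^{(m-k)\times(m-k)}$. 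Plugging this ansatz into the two equations reduces them to $X = -U_{\sf in}^*V_0$ and $Y+Y^*=O$, so the choice $Y=O$ and $V_1 = -U_{\sf st}U_{\sf in}^*V_0$ is consistent. Verifying directly that $(U,V)^*(U,V) = I_m$ with this $V$ is then a routine expansion.

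The only nontrivial step is the existence of the standard-part extension $V_0$; once that is in hand, the infinitesimal correction is pure linear algebra because $(U_{\sf st},V_0)$ supplies a convenient basis in which to decompose $V_1$. The fact that one has the freedom $Y+Y^*=O$ (rather than $Y=O$ being forced) shows that the extension is far from unique, which matches the situation already familiar for quaternion matrices.
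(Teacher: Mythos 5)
Your construction is correct. Note that the paper itself gives no proof of this statement: it is quoted verbatim as Corollary~3.10 of the reference \cite{LHQ22}, so there is no internal argument to compare against. Your two-stage approach—first extending the classical quaternion partial isometry $U_{\sf st}$ to a unitary $(U_{\sf st},V_0)\in\mathbb{Q}^{m\times m}$ by Gram--Schmidt, then solving the first-order orthogonality constraints for the infinitesimal correction $V_1$ by decomposing it in the basis $(U_{\sf st},V_0)$—is the natural one and checks out: with $V_1=-U_{\sf st}U_{\sf in}^*V_0$ one has $U_{\sf st}^*V_1+U_{\sf in}^*V_0=O$ immediately, and $V_0^*V_1+V_1^*V_0=O$ because $V_0^*U_{\sf st}=O$ and $U_{\sf st}^*V_0=O$, so $(U,V)^*(U,V)=I_m$. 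The observation that the freedom in $V_1$ is parametrized by a skew-Hermitian $Y$ is accurate and a nice touch.
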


\begin{Thm}\label{SVD-DQM}\cite{QL21}
For given $A\in \widehat{\mathbb{Q}}^{m\times n}$, there exists a unitary matrix $U\in \widehat{\mathbb{Q}}^{m\times m}$ and a unitary matrix $V\in\widehat{\mathbb{Q}}^{n\times n}$, such that
\begin{equation}\label{SVDDQMEQ}
A=U\left(\begin{array}{cc}\Sigma_s&O\\
O&O
\end{array} \right)_{m\times n}V^*,
\end{equation}
where $\Sigma_s\in \widehat{\mathbb{Q}}^{s\times s}$ is a diagonal matrix, taking the form
$\Sigma_s={\rm diag} (\sigma_1,\ldots, \sigma_r,\ldots,\sigma_s)$, $r \leq s\leq t:={\min}\{m, n\}$, $\sigma_1\geq \sigma_2\geq\ldots\geq\sigma_r$ are positive appreciable dual numbers, and $\sigma_{r+1}\geq \sigma_{r+2}\geq\ldots\geq\sigma_s$ are positive infinitesimal dual numbers. Counting possible multiplicities of the diagonal entries, the form $\Sigma_s$ is unique.
\end{Thm}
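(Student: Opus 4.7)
The plan is to bootstrap the SVD from the classical quaternion SVD of the standard part $A_{\sf st}$ and then use a first-order correction in $\epsilon$ to absorb the infinitesimal part. First I would apply the well-known quaternion SVD to $A_{\sf st}\in\mathbb{Q}^{m\times n}$, producing unitary matrices $\tilde U\in\mathbb{Q}^{m\times m}$, $\tilde V\in\mathbb{Q}^{n\times n}$ and a real diagonal matrix $\tilde\Sigma$ whose first $r$ diagonal entries $\sigma_1^{\sf st}\ge\cdots\ge\sigma_r^{\sf st}>0$ are the positive singular values of $A_{\sf st}$ and whose remaining diagonal entries vanish. Set $B:=\tilde U^* A\tilde V=\tilde\Sigma+C\epsilon$, with $C:=\tilde U^*A_{\sf in}\tilde V\in\mathbb{Q}^{m\times n}$, and partition $C$ into blocks $C_{11},C_{12},C_{21},C_{22}$ conformally with the block sizes $r$ and $m-r$ (respectively $r$ and $n-r$).

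Next I would look for the final unitary factors in the form $U=\tilde U P_0(I+W\epsilon)$ and $V=\tilde V Q_0(I+Z\epsilon)$, where $P_0,Q_0$ are quaternion unitary matrices that act as the identity on the first $r$ coordinates and as a classical quaternion SVD of $C_{22}$ on the remaining coordinates, and where $W\in\mathbb{Q}^{m\times m}$, $Z\in\mathbb{Q}^{n\times n}$ are skew-Hermitian (so that $U^*U=I_m+O(\epsilon^2)$, which is exactly unitarity in $\widehat{\mathbb{Q}}$). Expanding $U^*AV=\tilde\Sigma+(P_0^*CQ_0-W\tilde\Sigma+\tilde\Sigma Z)\epsilon$ to first order, the $(2,2)$ block becomes the diagonal matrix of singular values of $C_{22}$ (the infinitesimal singular values $\sigma_{r+1}^{\sf in}\ge\cdots\ge\sigma_s^{\sf in}>0$), so the remaining task is to choose skew-Hermitian $W,Z$ so that the $(1,2)$, $(2,1)$ and off-diagonal part of $(1,1)$ blocks vanish, while the diagonal of $(1,1)$ becomes the real diagonal matrix $\mathrm{diag}(\sigma_1^{\sf in},\ldots,\sigma_r^{\sf in})$ of the infinitesimal parts of the appreciable singular values.

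The $(1,2)$ and $(2,1)$ corrections are immediate: since $\tilde\Sigma$ is invertible on the top block with diagonal entries $\sigma_i^{\sf st}>0$, the choices $Z_{12}=-\tilde\Sigma_r^{-1}(P_0^*CQ_0)_{12}$ and $W_{21}=(P_0^*CQ_0)_{21}\tilde\Sigma_r^{-1}$ kill those blocks, and they automatically satisfy the skew-Hermitian constraint $W_{12}=-W_{21}^*$, $Z_{21}=-Z_{12}^*$. The main obstacle is the $(1,1)$ block: I must solve the Sylvester-type equation $W_{11}\tilde\Sigma_r-\tilde\Sigma_r Z_{11}=C_{11}-\mathrm{diag}(\sigma_i^{\sf in})$ for skew-Hermitian $W_{11},Z_{11}\in\mathbb{Q}^{r\times r}$ and real scalars $\sigma_i^{\sf in}$. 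Entrywise, for $i\neq j$ one obtains an invertible $2\times 2$ quaternion linear system in $(W_{11})_{ij}$ and $(Z_{11})_{ij}$ (assuming the singular values of $A_{\sf st}$ are distinct; the case of equal $\sigma_i^{\sf st}=\sigma_j^{\sf st}$ needs an additional quaternion diagonalization of the corresponding sub-block of $C_{11}$ inside $P_0,Q_0$). For $i=j$, the diagonal entries of $W_{11},Z_{11}$ are pure imaginary quaternions, and matching real versus imaginary parts recovers the real number $\sigma_i^{\sf in}=\mathrm{Re}\,(C_{11})_{ii}$ while leaving one degree of freedom to solve for the imaginary parts.

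Finally, for uniqueness I would argue that the standard part of $\Sigma_s$ consists of the singular values of $A_{\sf st}$ in decreasing order, which are unique, and the infinitesimal singular values are the singular values of $C_{22}$ after the reduction above, which are also unique; together with the total order on $\widehat{\mathbb{R}}$ introduced earlier, this gives uniqueness of $\Sigma_s$ up to multiplicity. The principal technical hurdles are the degenerate cases of repeated standard singular values (handled by an extra block diagonalization absorbed into $P_0,Q_0$) and the verification that the correction $(I+W\epsilon)$ indeed yields a dual-quaternion unitary matrix, which follows from skew-Hermiticity and $\epsilon^2=0$.
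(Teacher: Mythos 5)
This theorem is imported from Qi and Luo \cite{QL21}, so the paper supplies no proof of its own to compare against; the argument in that reference proceeds through the unitary diagonalization of dual quaternion Hermitian matrices (Theorem~4.1 there, cf.\ Theorem~\ref{HUDec}) applied to $A^*A$, whereas you construct the two unitary factors directly by a first-order perturbation of the classical quaternion SVD of $A_{\sf st}$. Both arguments are perturbative in $\epsilon$, but yours bypasses the Hermitian detour at the price of carrying two independent skew-Hermitian corrections $W,Z$ and a more delicate case analysis.

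The existence half of your sketch is workable, but two points need attention. In the degenerate case $\sigma_i^{\sf st}=\sigma_j^{\sf st}$ the entrywise $2\times 2$ system is solvable only when $(C_{11})_{ij}+\overline{(C_{11})_{ji}}=0$, i.e.\ only when the Hermitian part $\tfrac{1}{2}\big(C_{11}^{JJ}+(C_{11}^{JJ})^*\big)$ of each repeated-singular-value block $J$ is already diagonal; the extra unitary (which must satisfy $P_0^{JJ}=Q_0^{JJ}$ so that $P_0^*\tilde\Sigma Q_0$ stays equal to $\tilde\Sigma$) therefore has to diagonalize that \emph{Hermitian part} --- not $C_{11}^{JJ}$ itself, as you wrote --- and the $\sigma_i^{\sf in}$ are then its eigenvalues. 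More seriously, your uniqueness argument does not close: $C=\tilde U^*A_{\sf in}\tilde V$ and the subsequent $P_0,Q_0$ depend on highly non-unique choices, and you have not shown that the quantities you extract (the singular values of $C_{22}$ and the diagonal entries $\sigma_i^{\sf in}$) are invariant under all of them. The clean remedy, which is also the route the reference takes, is to note that any decomposition $A=U\Sigma_sV^*$ forces $A^*A=V(\Sigma_s^*\Sigma_s)V^*$, so uniqueness of $\Sigma_s$ reduces to the uniqueness clause of the Hermitian decomposition theorem applied to $A^*A$.
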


In Theorem \ref{SVD-DQM}, the dual numbers $\sigma_1, \cdots, \sigma_s$ and possibly $\sigma_{s+1} = \cdots = \sigma_t=0$, if $s < t$, are called the singular values of $A$, the integer $s$ is called the rank of $A$, and the integer $r$ is called the appreciable rank of $A$.   We denote the rank of $A$ by rank$(A)$, and the appreciable rank of $A$ by Arank$(A)$.






\section{Determinant of dual complex matrices}\label{Deter-DNMat}
Denote by $\widehat{\mathbb C}^{m\times n}$ the set of $m\times n$ dual complex matrices. Due to the commutative multiplication of dual complex numbers, we present the following definition of the determinant of dual complex matrices.
\begin{Def}\label{Def-Det}
Let $A=(a_{ij})\in \widehat{\mathbb{C}}^{m\times m}$. 
The determinant of $A$ is defined by
$$
{\rm det}[A]=\mathop{\sum}\limits_{(j_1,j_2,\ldots,j_m)}(-1)^{\tau(j_1,j_2,\ldots,j_m)}a_{1j_1}a_{2j_2}\cdots a_{mj_m},
$$
where the summation, of $m!$ terms, being extended over all permutations $(j_1,j_2,\ldots,j_m)$ of column indexes of the elements $a_{ij}$, and $\tau(j_1,j_2,\ldots,j_m)$ represents the number of the reverse order pairs of the permutation $(j_1,j_2,\ldots,j_m)$.
\end{Def}

From Definition \ref{Def-Det}, it is obvious that ${\rm det}[\bar A]=\overline{{\rm det}[A]}$, and ${\rm det}[A]=\prod_{i=1}^ma_{ii}$ for any given diagonal matrix $A={\rm diag}(a_{11},a_{22},\ldots,a_{mm})\in \widehat{\mathbb{C}}^{m\times m}$. 
Moreover, due to the commutativity and distributive law of the multiplication of dual complex numbers, by Definition \ref{Def-Det}, we have
\begin{PropT}\label{Five-Prop-Det}
Let $A=({\bf a}_1;\ldots;{\bf a}_i;\ldots;{\bf a}_j;\ldots;{\bf a}_m)\in \widehat{\mathbb{C}}^{m\times m}$ with $i<j$ and ${\bf a}_k$ being the $k$-th row of $A$ for $k=1,2,\ldots,m$, we have

(a) ${\rm det}[A^\top]={\rm det}[A]$;

(b) ${\rm det}[B]=-{\rm det}[A]$, where $B=({\bf a}_1;\ldots;{\bf a}_{i-1};{\bf a}_j;{\bf a}_{i+1};\ldots;{\bf a}_{j-1};{\bf a}_i;{\bf a}_{j+1};\ldots;{\bf a}_m)$;

(c) ${\rm det}[B]=\alpha{\rm det}[A]$, where $B=({\bf a}_1;\ldots;{\bf a}_{i-1};\alpha{\bf a}_i;{\bf a}_{i+1};\ldots;{\bf a}_m)$ and $\alpha\in \widehat{\mathbb{C}}$;

(d) ${\rm det}[B]={\rm det}[A]$, where $B=({\bf a}_1;\ldots;{\bf a}_i;\ldots;{\bf a}_{j-1};{\bf a}_j+\alpha{\bf a}_i;{\bf a}_{j+1};\ldots;{\bf a}_m)$ and $\alpha\in \widehat{\mathbb{C}}$;

(e) ${\rm det}[C]={\rm det}[A]+{\rm det}[B]$, where $$B=({\bf a}_1;\ldots;{\bf a}_{i-1};{\bf b}_i;{\bf a}_{i+1};\ldots;{\bf a}_m)~~{\rm and}~~C=({\bf a}_1;\ldots;{\bf a}_{i-1};{\bf a}_i+{\bf b}_i;{\bf a}_{i+1};\ldots;{\bf a}_m).$$
\end{PropT}

Similar to the determinant of the sum of two complex square matrices, we also have the following expansion formula.
\begin{Prop}\label{Prop-+} Let $A, B\in \widehat{\mathbb{C}}^{m\times m}$. It holds that
\begin{equation}\label{A+B}
\begin{array}{l}
{\rm det}[A+B]\\
={\rm det}[A]+\mathop{\sum}\limits_{1\leq i_1\leq m}\mathop{\sum}\limits_{1\leq j_1\leq m}{\rm det}\big[\hat{A}_C\left(\begin{array}{c}i_1\\j_1\end{array}\right)\big]{\rm det}\big[B\left(\begin{array}{c}i_1\\j_1\end{array}\right)\big]\\
~~+\mathop{\sum}\limits_{1\leq i_1<i_2\leq m}\mathop{\sum}\limits_{1\leq j_1<j_2\leq m}{\rm det}\big[\hat{A}_C\left(\begin{array}{c}i_1i_2\\j_1j_2\end{array}\right)\big]{\rm det}\big[B\left(\begin{array}{c}i_1i_2\\j_1j_2\end{array}\right)\big]+\ldots\\
~~+\mathop{\sum}\limits_{1\leq i_1<\ldots <i_{m-1}\leq m}\mathop{\sum}\limits_{1\leq j_1<\ldots<j_{m-1}\leq m}{\rm det}\big[\hat{A}_C\left(\begin{array}{c}i_1\cdots i_{m-1}\\j_1\cdots j_{m-1}\end{array}\right)\big]{\rm det}\big[B\left(\begin{array}{c}i_1\cdots i_{m-1}\\j_1\cdots j_{m-1}\end{array}\right)\big]+{\rm det}[B],
\end{array}
\end{equation}
where ${\rm det}\big[D\left(\begin{array}{c}i_1\cdots i_{k}\\j_1\cdots j_{k}\end{array}\right)\big]$ represents the $k$-order determinant composed of $k^2$ elements at the intersection of rows $i_1,i_2,\ldots,i_k$ and columns $j_1,j_2,\ldots, j_k$ in the original order in $D$, and ${\rm det}\big[\hat{D}_C\left(\begin{array}{c}i_1\cdots i_{k}\\j_1\cdots j_{k}\end{array}\right)\big]$ represents the algebraic cofactor of ${\rm det}\big[D\left(\begin{array}{c}i_1\cdots i_{k}\\j_1\cdots j_{k}\end{array}\right)\big]$ in $D$.
\end{Prop}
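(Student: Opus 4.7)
The plan is to use the multilinearity of $\det$ as a function of rows (Property~(e) in the preceding list), combined with the Laplace expansion theorem along a set of rows. Both tools go through unchanged for dual complex matrices because dual complex multiplication is commutative, so the classical combinatorial identities for the permutation-sum definition carry over verbatim.

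First I would write each row of $A+B$ as $\va_i + \vb_i$, where $\va_i,\vb_i$ denote the $i$-th rows of $A,B$, and iterate Property~(e) once per row. This expands $\det[A+B]$ into a sum of $2^m$ determinants indexed by subsets $S \subseteq \{1,\ldots,m\}$: for each $S$, the corresponding determinant is that of the matrix $M_S$ whose $i$-th row equals $\vb_i$ if $i\in S$ and $\va_i$ if $i\notin S$. I would then group terms by cardinality $k = |S|$, getting the $k=0$ term $\det[A]$ and the $k=m$ term $\det[B]$, plus intermediate groups indexed by $1\le i_1<\cdots<i_k\le m$ for $1\le k\le m-1$.

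Second, I would apply the Laplace expansion to $\det[M_S]$ along the $k$ rows indexed by $S=\{i_1,\ldots,i_k\}$, all of which come from $B$. This yields
$$
\det[M_S] \;=\; \sum_{1\le j_1<\cdots<j_k\le m} \det\bigl[B\bigl(\begin{smallmatrix}i_1\cdots i_k\\ j_1\cdots j_k\end{smallmatrix}\bigr)\bigr]\cdot \det\bigl[\hat{A}_C\bigl(\begin{smallmatrix}i_1\cdots i_k\\ j_1\cdots j_k\end{smallmatrix}\bigr)\bigr],
$$
where the sign $(-1)^{i_1+\cdots+i_k+j_1+\cdots+j_k}$ is absorbed into the algebraic cofactor $\hat{A}_C$, and the remaining minor involves only the entries of $A$ in the rows $\{1,\ldots,m\}\setminus\{i_1,\ldots,i_k\}$ and columns $\{1,\ldots,m\}\setminus\{j_1,\ldots,j_k\}$, i.e.\ precisely the complementary $A$-minor. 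Summing over $S$ within each size class $k$ and then over $k$ reproduces (\ref{A+B}).

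The only genuine obstacle is establishing the Laplace expansion of $\det[M_S]$ for a dual complex matrix. I would verify this by tracking the permutation-sum definition of Definition~\ref{Def-Det}: partition each permutation $(j_1,\ldots,j_m)$ according to the values $\{j_{i_1},\ldots,j_{i_k}\}$, factor the product of entries into a part involving only the chosen $B$-rows and a complementary part involving only the $A$-rows, and show that the sign separates as the product of the intra-block signs times the combinatorial sign $(-1)^{\sum i_\ell+\sum j_\ell}$. This is the standard derivation, and it relies only on the commutativity and distributivity of $\widehat{\mathbb C}$, both of which hold. Once Laplace expansion is in hand, the proof is a bookkeeping exercise and (\ref{A+B}) follows.
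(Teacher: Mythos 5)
Your proof is correct and is essentially the paper's argument: the paper simply cites Property~(e) (row additivity), which is exactly the $2^m$-term multilinear expansion you carry out, followed by regrouping by $|S|$ and Laplace expansion along the chosen rows. You have just spelled out the bookkeeping that the paper leaves implicit.
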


\begin{proof}
It follows from (e) in Property \ref{Five-Prop-Det}.
\end{proof}
\begin{Prop}\label{A-det-Prop-1} Let $A=A_{\sf st}+A_{\sf in}\epsilon$, where $A_{\sf st}, A_{\sf in}\in \mathbb{C}^{m\times m}$. Then we have
\begin{equation}\label{Det-A}
{\rm det}[A]={\rm det}[A_{{\sf st}}]+\left(\sum_{i=1}^m{\rm det}[A(i)]\right)\epsilon,
\end{equation}
where
$$
A(i)=\left(\begin{array}{cccc}
(a_{11})_{\sf st}&(a_{12})_{\sf st}&\cdots&(a_{1m})_{\sf st}\\
\vdots&\vdots&\ddots&\vdots\\
(a_{i-11})_{\sf st}&(a_{i-12})_{\sf st}&\cdots&(a_{i-1m})_{\sf st}\\
(a_{i1})_{\sf in}&(a_{i2})_{\sf in}&\cdots&(a_{im})_{\sf in}\\
(a_{i+11})_{\sf st}&(a_{i+12})_{\sf st}&\cdots&(a_{i+1m})_{\sf st}\\
\vdots&\vdots&\ddots&\vdots\\
(a_{m1})_{\sf st}&(a_{m2})_{\sf st}&\cdots&(a_{mm})_{\sf st}\\
\end{array}
\right),~~~~~i=1,2,\ldots,m.
$$
\end{Prop}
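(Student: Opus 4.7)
The plan is to apply the row multilinearity of the determinant (property (e) of Property \ref{Five-Prop-Det}) together with the nilpotency $\epsilon^2=0$ to expand $\det[A]$ into its standard and infinitesimal parts. First, decompose each row of $A$ as the sum of two rows: for $i=1,\ldots,m$, the $i$-th row of $A$ equals $(A_{\sf st})_{i,:}+\epsilon\,(A_{\sf in})_{i,:}$, where $(A_{\sf st})_{i,:}$ and $(A_{\sf in})_{i,:}$ denote the $i$-th rows of $A_{\sf st}$ and $A_{\sf in}$ respectively. Iterating property (e) across all $m$ rows expands $\det[A]$ as a sum of $2^m$ determinants indexed by subsets $S\subseteq\{1,\ldots,m\}$, where the $S$-term is the determinant of the matrix whose rows $i\in S$ are $\epsilon\,(A_{\sf in})_{i,:}$ and whose rows $i\notin S$ are $(A_{\sf st})_{i,:}$.

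Next, apply property (c) to pull the factor $\epsilon$ out of every row indexed by $S$; the $S$-term then becomes $\epsilon^{|S|}$ times an ordinary complex determinant. Because $\epsilon^2=0$, every term with $|S|\ge 2$ vanishes, leaving only $|S|=0$ and $|S|=1$. The $S=\emptyset$ term is exactly $\det[A_{\sf st}]$, and for each $i\in\{1,\ldots,m\}$ the $S=\{i\}$ term is precisely $\epsilon\,\det[A(i)]$, since $A(i)$ is by definition the matrix formed from $A_{\sf st}$ by replacing its $i$-th row with the $i$-th row of $A_{\sf in}$. Summing over $i$ yields formula (\ref{Det-A}).

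The argument is essentially mechanical once multilinearity is invoked together with $\epsilon^2=0$; the only bookkeeping point to be careful about is verifying that pulling $\epsilon$ out of a single row via property (c) indeed produces the matrix $A(i)$ with the precise row/column arrangement prescribed in the statement, and there is no substantive obstacle beyond this.
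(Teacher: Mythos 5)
Your argument is correct, and it takes a somewhat different route from the paper. The paper works directly from the Leibniz definition (Definition \ref{Def-Det}): it expands the product $\prod_{i=1}^m a_i$ of $m$ dual complex numbers, discards all terms carrying $\epsilon^2$, and then recognizes the coefficient of $\epsilon$ in the resulting sum over permutations as $\sum_{i=1}^m\det[A(i)]$ by comparing with the complex Leibniz formula applied to each $A(i)$. You instead invoke the row multilinearity and row scaling properties (Property \ref{Five-Prop-Det}, items (e) and (c)) to expand $\det[A]$ into $2^m$ terms indexed by subsets $S\subseteq\{1,\ldots,m\}$, pull the scalar $\epsilon$ out of each row in $S$, and then let nilpotency $\epsilon^2=0$ kill every term with $|S|\ge 2$. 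The two proofs are really two faces of the same underlying fact---multilinearity of the determinant---applied at different granularities: the paper at the level of individual Leibniz products, you at the level of whole rows. Your version is slightly more structural (it never needs to unpack the permutation sum explicitly) and reuses the already-stated Property \ref{Five-Prop-Det}, whereas the paper's version is more self-contained, deriving everything from Definition \ref{Def-Det} without invoking that Property. Both are complete and correct.
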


\begin{proof}
Since $\Pi_{i=1}^ma_i=\Pi_{i=1}^m(a_i)_{\sf st}+\big((a_1)_{\sf in}(a_2)_{\sf st}(a_3)_{\sf st}\cdots (a_m)_{\sf st}+(a_1)_{\sf st}(a_2)_{\sf in}(a_3)_{\sf st}\cdots (a_m)_{\sf st}+\ldots+(a_1)_{\sf st}(a_2)_{\sf st}\cdots(a_{m-1})_{\sf st}(a_m)_{\sf in}\big)\epsilon$ for any $a_i=(a_i)_{\sf st}+(a_i)_{\sf in}\epsilon\in \widehat{\mathbb{C}}$ with $i=1,2,\ldots,m$, by Definition \ref{Def-Det}, it holds that
$$
\begin{array}{l}
{\rm det}[A]\\
\displaystyle=\mathop{\sum}\limits_{(j_1,j_2,\ldots,j_m)}(-1)^{\tau(j_1,j_2,\ldots,j_m)}(a_{1j_1})_{\sf st}(a_{2j_2})_{\sf st}\cdots (a_{mj_m})_{\sf st}\\
~~~\displaystyle+\left(\mathop{\sum}\limits_{(j_1,j_2,\ldots,j_m)}(-1)^{\tau(j_1,j_2,\ldots,j_m)}\sum_{i=1}^m(a_{1j_1})_{\sf st}\cdots(a_{i-1j_{i-1}})_{\sf st}(a_{ij_i})_{\sf in}(a_{i+1j_{i+1}})_{\sf st}\cdots (a_{mj_m})_{\sf st}\right)\epsilon.
\end{array}
$$
From the definition of the determinant of $m\times m$ complex matrices, we know that for every $i=1,2,\ldots,m$,
$$
{\rm det}[A(i)]=\mathop{\sum}\limits_{(j_1,j_2,\ldots,j_m)}(-1)^{\tau(j_1,j_2,\ldots,j_m)}(a_{1j_1})_{\sf st}\cdots(a_{i-1j_{i-1}})_{\sf st}(a_{ij_i})_{\sf in}(a_{i+1j_{i+1}})_{\sf st}\cdots (a_{mj_m})_{\sf st}.
$$
Consequently, we obtain the desired result and complete the proof.
\end{proof}

From (\ref{Det-A}), we know that, $A$ is invertible if and only if ${\rm det}(A_{\rm st})\neq 0$ (i.e., ${\rm det}(A)$ is appreciable), which is equivalent to that $A_{\rm st}$ is invertible.

\begin{Prop}\label{Prop-2}
Let $A\in \widehat{\mathbb{C}}^{m\times m}$, $B\in \widehat{\mathbb{C}}^{n\times n}$, $C\in \widehat{\mathbb{C}}^{n\times m}$ and $D\in \widehat{\mathbb{C}}^{m\times n}$. It holds that
$${\rm det}\left(\begin{array}{cc}
A&O\\
C&B
\end{array}
\right)={\rm det}[A]{\rm det}[B]~~~{\rm and}~~~{\rm det}\left(\begin{array}{cc}
A&D\\
O&B
\end{array}
\right)={\rm det}[A]{\rm det}[B].$$
\end{Prop}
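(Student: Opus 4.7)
The plan is to mimic the classical proof of the block-triangular determinant formula for complex matrices, leveraging the fact that Definition \ref{Def-Det} of the dual complex determinant is formally identical to the Leibniz formula and that dual complex multiplication is commutative. I will focus on the first identity; the second follows either by the same argument or by applying Property (a) of \ref{Five-Prop-Det} since the transpose of a block upper triangular matrix is block lower triangular with the diagonal blocks transposed.

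Write $M = \left(\begin{smallmatrix} A & O \\ C & B \end{smallmatrix}\right) = (m_{ij}) \in \widehat{\mathbb{C}}^{(m+n)\times(m+n)}$, so $m_{ij} = 0$ whenever $i \le m$ and $j > m$. Expand $\det[M]$ by Definition \ref{Def-Det} as a signed sum over all permutations $\pi$ of $\{1,2,\ldots,m+n\}$. The key observation is that if $\pi(i) > m$ for some $i \le m$, then the corresponding product $m_{1,\pi(1)}\cdots m_{m+n,\pi(m+n)}$ contains the factor $m_{i,\pi(i)} = 0$ and therefore vanishes. Hence the only permutations contributing nonzero terms are those satisfying $\pi(\{1,\ldots,m\}) = \{1,\ldots,m\}$, which by a counting argument forces $\pi(\{m+1,\ldots,m+n\}) = \{m+1,\ldots,m+n\}$. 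Such a $\pi$ decomposes uniquely as $\pi = (\sigma,\tau)$, where $\sigma$ is a permutation of $\{1,\ldots,m\}$ and $\tau$ is a permutation of $\{m+1,\ldots,m+n\}$; moreover the number of inversions of $\pi$ equals the sum of those of $\sigma$ and $\tau$, so that $(-1)^{\tau(\pi)} = (-1)^{\tau(\sigma)}(-1)^{\tau(\tau)}$.

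Using the commutativity of multiplication in $\widehat{\mathbb{C}}$, the remaining sum factors as
\begin{equation*}
\det[M] = \left(\sum_{\sigma} (-1)^{\tau(\sigma)} \prod_{i=1}^{m} a_{i,\sigma(i)}\right)\left(\sum_{\tau} (-1)^{\tau(\tau)} \prod_{j=1}^{n} b_{j,\tau(m+j)-m}\right) = \det[A]\,\det[B],
\end{equation*}
which is the desired identity. For the block upper triangular case, applying Property (a) of \ref{Five-Prop-Det} gives
\begin{equation*}
\det\left(\begin{array}{cc} A & D \\ O & B \end{array}\right) = \det\left(\begin{array}{cc} A^\top & O \\ D^\top & B^\top \end{array}\right) = \det[A^\top]\,\det[B^\top] = \det[A]\,\det[B].
\end{equation*}

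The argument is essentially bookkeeping; the only nontrivial ingredient is the observation that permutations violating the block structure kill the corresponding term, combined with the multiplicativity of the sign. I do not expect a genuine obstacle: commutativity of dual complex multiplication means that neither the rearrangement of factors nor the factorization of the double sum raises any issue, in contrast to what would happen for dual quaternion matrices (which is precisely why only the \emph{quasi}-determinant is defined in the quaternion setting in Section \ref{DET-DQ}).
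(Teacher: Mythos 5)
Your proof is correct, but it takes a genuinely different route from the paper's. The paper first invokes Proposition \ref{A-det-Prop-1} to write $\det[G]=\det[G_{\sf st}]+\bigl(\sum_i \det[G(i)]\bigr)\epsilon$, observes that each of $G_{\sf st}$ and the matrices $G(i)$ inherits the block-triangular structure, and then reduces to the classical complex block-determinant identity term by term, recombining at the end via the same formula applied to $A$ and $B$. You instead argue directly from the Leibniz expansion of Definition \ref{Def-Det}: permutations that move an index from $\{1,\ldots,m\}$ into $\{m+1,\ldots,m+n\}$ hit a zero entry of the off-diagonal block, so only block-preserving permutations survive, the sign splits multiplicatively, and commutativity of $\widehat{\mathbb{C}}$ lets the double sum factor. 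Your argument is more self-contained and makes explicit that the classical combinatorial proof goes through verbatim over any commutative ring, which is the real reason the result holds; the paper's route is less elementary but illustrates the general device (Proposition \ref{A-det-Prop-1}) of reducing dual determinant identities to pairs of complex ones, a technique it reuses elsewhere. One minor stylistic point: you overload $\tau$ both as the inversion-count function from Definition \ref{Def-Det} and as the name of the second permutation, producing expressions like $(-1)^{\tau(\tau)}$; rename one of them to avoid confusion. The reduction of the upper-triangular case to the lower-triangular one via Property (a) of \ref{Five-Prop-Det} is fine.
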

\begin{proof}
We only prove the first formula. The second formula can be proved similarly. Write $A=A_{\sf st}+A_{\sf in}\epsilon$ with $A_{\sf st},A_{\sf in}\in \mathbb{C}^{m\times m}$, $B=B_{\sf st}+B_{\sf in}\epsilon$ with $B_{\sf st},B_{\sf in}\in \mathbb{C}^{n\times n}$, and $C=C_{\sf st}+C_{\sf in}\epsilon$ with $C_{\sf st},C_{\sf in}\in \mathbb{C}^{m\times n}$. Write $G=\left(\begin{array}{cc}
A&O\\
C&B
\end{array}
\right)$. Then $G=G_{\sf st}+G_{\sf in}\epsilon$ with $G_{\sf st}=\left(\begin{array}{cc}
A_{\sf st}&O\\
C_{\sf st}&B_{\sf st}
\end{array}
\right),G_{\sf in}=\left(\begin{array}{cc}
A_{\sf in}&O\\
C_{\sf in}&B_{\sf in}
\end{array}
\right)\in \mathbb{C}^{(m+n)\times (m+n)}$. By Proposition \ref{A-det-Prop-1}, we have
$$
{\rm det}[G]={\rm det}[G_{\sf st}]+\left(\sum_{i=1}^{m+n}{\rm det}[G(i)]\right)\epsilon,
$$
where
$$
G(i)=\left(
\begin{array}{cc}
A(i)&O\\
C_{\sf st}&B_{\sf st}
\end{array}
\right),~~i=1,2,\ldots,m$$
and
$$G(i)=\left(
\begin{array}{cc}
A_{\sf st}&O\\
C(i-m)&B(i-m)
\end{array}
\right),~~i=m+1,m+2,\ldots,m+n.
$$
By well-known matrix theory, we know ${\rm det}[G_{\sf st}]={\rm det}[A_{\sf st}]{\rm det}[B_{\sf st}]$, ${\rm det}[G(i)]={\rm det}[A(i)]{\rm det}[B_{\sf st}]$ for $i=1,2,\ldots,m$
 and ${\rm det}[G(i)]={\rm det}[A_{\sf st}]{\rm det}[B(i-m)]$ for $i=m+1,\ldots,m+n$. Consequently, we have
 $$
 \begin{array}{lll}
 {\rm det}[G]&=&\displaystyle{\rm det}[A_{\sf st}]{\rm det}[B_{\sf st}]+\left({\rm det}[B_{\sf st}]\sum_{i=1}^m{\rm det}[A(i)]+{\rm det}[B_{\sf st}]\sum_{i=1}^{n}{\rm det}[B(i)]\right)\epsilon\\
 &=&\displaystyle\left({\rm det}[A_{\sf st}]+\left(\sum_{i=1}^m{\rm det}[A(i)]\right)\epsilon\right)\left({\rm det}[B_{\sf st}]+\left(\sum_{i=1}^n{\rm det}[B(i)]\right)\epsilon\right)\\
 &=&{\rm det}[A]{\rm det}[B].
 \end{array}
 $$
 We complete the proof.
\end{proof}

\begin{Prop}\label{AB-Product}
Let $C=AB$, where $A,B\in \widehat{\mathbb{C}}^{m\times m}$. It holds that ${\rm det}[C]={\rm det}[A]{\rm det}[B]$.
\end{Prop}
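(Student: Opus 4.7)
The plan is to follow the classical $2m\times 2m$ block-matrix trick that proves multiplicativity of the determinant over any commutative ring. This adapts cleanly here because multiplication in $\widehat{\mathbb{C}}$ is commutative and the two tools I need—the elementary row/column rules of Property \ref{Five-Prop-Det} and the block-triangular formula of Proposition \ref{Prop-2}—are already established.

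First, I would introduce the auxiliary matrix
$$M=\left(\begin{array}{cc} A & O \\ -I_m & B \end{array}\right)\in\widehat{\mathbb{C}}^{2m\times 2m}.$$
A direct application of Proposition \ref{Prop-2} gives $\det[M]=\det[A]\det[B]$. I would then compute $\det[M]$ a second way by row-reducing the top block. Iterate the row operation of Property \ref{Five-Prop-Det}(d): for each $i=1,\ldots,m$ and $k=1,\ldots,m$, add $a_{ik}$ times row $m+k$ to row $i$, one scalar multiple at a time. Since the rows $m+1,\ldots,2m$ are never themselves modified, each individual step is a legitimate instance of (d), and the cumulative effect on the top block is to replace $(A,\,O)$ by $(A-A,\,AB)=(O,\,AB)$. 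Thus, without changing the determinant, $M$ becomes
$$M'=\left(\begin{array}{cc} O & AB \\ -I_m & B \end{array}\right),\qquad \det[M']=\det[A]\det[B].$$

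Next, I would interchange column $j$ with column $m+j$ for $j=1,\ldots,m$, a total of $m$ column transpositions. Each such swap changes the sign of the determinant (apply (a) to pass to the transpose, use (b), then apply (a) again), giving
$$\det\left(\begin{array}{cc} AB & O \\ B & -I_m \end{array}\right)=(-1)^m\det[M']=(-1)^m\det[A]\det[B].$$
By Proposition \ref{Prop-2} the left-hand side equals $\det[AB]\cdot\det[-I_m]$, and by $m$ applications of Property \ref{Five-Prop-Det}(c) we have $\det[-I_m]=(-1)^m$. Cancelling the common factor $(-1)^m$ yields $\det[AB]=\det[A]\det[B]$.

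The one point that requires care is ensuring that every manipulation is literally one clause of Property \ref{Five-Prop-Det}: the composite operation ``add a linear combination of other rows to a given row'' must be broken into single-multiple additions permitted by (d), and the column swaps must be reduced to row swaps via the transpose identity (a). Both reductions rely on commutativity of dual complex multiplication, which is precisely the feature absent in $\widehat{\mathbb{Q}}$ and the reason this proof cannot be pushed verbatim to general dual quaternion matrices.
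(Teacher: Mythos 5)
Your proof is correct and follows essentially the same block-matrix strategy as the paper: form the $2m\times 2m$ matrix $\left(\begin{array}{cc} A & O \\ \pm I_m & B \end{array}\right)$, apply Proposition \ref{Prop-2} to evaluate its determinant as $\det[A]\det[B]$, and then use the row operations of Property \ref{Five-Prop-Det}(d) to make the product $AB$ appear. The only divergence is in the finish: the paper additionally clears the bottom-right block $B$, reduces to the anti-block-diagonal matrix $\left(\begin{array}{cc} O & -C \\ I & O \end{array}\right)$, and then evaluates that determinant directly from the permutation definition with an explicit parity count, whereas you instead perform $m$ column swaps and invoke Proposition \ref{Prop-2} a second time, cancelling the common $(-1)^m$; your variant is a touch cleaner since it avoids the hands-on sign bookkeeping, but it is the same underlying argument.
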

\begin{proof}
Let $D=\left(\begin{array}{cc}
A&O\\
I&B\end{array}\right)$. By Proposition \ref{Prop-2}, it holds that ${\rm det}[D]={\rm det}[A]{\rm det}[B]$. Moreover, by (d) in Property \ref{Five-Prop-Det}, it is easy to see that ${\rm det}[D]={\rm det}[F]$ where $F=\left(\begin{array}{cc}
O&-C\\
I&O\end{array}\right)$, which can be obtained through elementary row transformations similar to complex determinants. On the other hand, by the definition of ${\rm det}[\cdot]$, we know that
$$
{\rm det}[F]=\mathop{\sum}\limits_{(j_1,j_2,\ldots,j_m,j_{m+1},\ldots,j_{2m})}(-1)^{\tau(j_1,j_2,\ldots,j_m,j_{m+1},\ldots,j_{2m})}f_{1j_1}\cdots f_{mj_m} f_{m+1j_{m+1}}\cdots f_{2mj_{2m}}.
$$
From the special structure of $F$, we further know
$$
{\rm det}[F]=(-1)^m\mathop{\sum}\limits_{(j_1,j_2,\ldots,j_m),m+1\leq j_1,j_2,\ldots,j_m\leq 2m}(-1)^{\tau(j_1,j_2,\ldots,j_m,1,2,\ldots,m)}c_{1j_1}c_{2j_2}\cdots c_{mj_{m}}.
$$
Since $m+1\leq j_1,j_2,\ldots,j_m\leq 2m$, it is obvious that $\tau(j_1,j_2,\ldots,j_m,1,2,\ldots,m)=m^2+\tau(j_1,j_2,\ldots,j_m,1,2,\ldots,m)$, which implies
 that
$$
\begin{array}{lll}
{\rm det}[F]&=&\displaystyle(-1)^{m+m^2}\mathop{\sum}\limits_{(j_1,j_2,\ldots,j_m)}(-1)^{\tau(j_1,j_2,\ldots,j_m)}c_{1j_1}c_{2j_2}\cdots c_{mj_{m}}\\
&=&\displaystyle\mathop{\sum}\limits_{(j_1,j_2,\ldots,j_m)}(-1)^{\tau(j_1,j_2,\ldots,j_m)}c_{1j_1}c_{2j_2}\cdots c_{mj_{m}}\\
&=&{\rm det}[C],
\end{array}
$$
where the second equality is due to the fact that $m^2+m$ is always even for any positive integer $m$.
Therefore, we obtain ${\rm det}[C]={\rm det}[A]{\rm det}[B]$ and complete the proof.
\end{proof}

When $A\in\widehat{\mathbb{C}}^{m\times m}$ is invertible, by Proposition \ref{AB-Product}, it holds that ${\rm det}[A]{\rm det}[A^{-1}]=1$ from the fact $AA^{-1}=I$, which implies ${\rm det}[A^{-1}]=({\rm det}[A])^{-1}$. Moreover, by (\ref{e2}) and (\ref{Det-A}), we have
$$
{\rm det}[A^{-1}]=({\rm det}[A_{\sf st}])^{-1}-({\rm det}[A_{\sf st}])^{-2}\left(\sum_{i=1}^m{\rm det}[A(i)]\right)\epsilon.
$$
In particular, if $A$ is unitary, i.e., $A^{-1}=A^*(=\bar A^\top)$, then ${\rm det}[A]=({\rm det}[\bar A^\top])^{-1}=({\rm det}[\bar A])^{-1}$, which implies ${\rm det}[A]{\rm det}[A^*]={\rm det}[A]\overline{{\rm det}[A]}=1$, since ${\rm det}[\bar A]=\overline{{\rm det}[A]}$.

\begin{Prop}\label{ABCD} Let $A\in \widehat{\mathbb{C}}^{m\times m}$, $B\in \widehat{\mathbb{C}}^{m\times n}$, $C\in \widehat{\mathbb{C}}^{n\times m}$ and $D\in \widehat{\mathbb{C}}^{n\times n}$. If $A$ is invertible, then it holds that
$$
{\rm det}\left(\begin{array}{cc}
A&B\\
C&D\\
\end{array}\right)={\rm det}[A]{\rm det}[D-CA^{-1}B].
$$
\end{Prop}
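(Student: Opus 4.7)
The plan is to reduce the statement to a product of two block triangular determinants by means of a block LU-type factorization, then apply Propositions \ref{AB-Product} and \ref{Prop-2}. Concretely, since $A$ is invertible (so $A^{-1}\in \widehat{\mathbb{C}}^{m\times m}$ exists by the remark following Proposition \ref{A-det-Prop-1}), I would first verify the block identity
\[
\begin{pmatrix} A & B \\ C & D \end{pmatrix}
=
\begin{pmatrix} I_m & O \\ CA^{-1} & I_n \end{pmatrix}
\begin{pmatrix} A & B \\ O & D - CA^{-1}B \end{pmatrix}
\]
by direct block multiplication. This step is routine because the multiplication of dual complex numbers is commutative and distributive, so the usual block-matrix arithmetic over $\widehat{\mathbb{C}}$ behaves exactly as over $\mathbb{C}$.

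Next, I would apply Proposition \ref{AB-Product} (multiplicativity of the determinant) to the above factorization, getting
\[
{\rm det}\begin{pmatrix} A & B \\ C & D \end{pmatrix}
=
{\rm det}\begin{pmatrix} I_m & O \\ CA^{-1} & I_n \end{pmatrix}\cdot
{\rm det}\begin{pmatrix} A & B \\ O & D - CA^{-1}B \end{pmatrix}.
\]
Then I would evaluate each factor using Proposition \ref{Prop-2}: the first factor, being block lower triangular with identity diagonal blocks, has determinant ${\rm det}[I_m]\,{\rm det}[I_n]=1$; the second factor, being block upper triangular, has determinant ${\rm det}[A]\,{\rm det}[D-CA^{-1}B]$. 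Combining these yields the desired formula.

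I do not anticipate a genuine obstacle here. The only thing to be mindful of is to cite the correct earlier results to justify that all manipulations stay within the dual complex framework: namely, that $A^{-1}$ is a well-defined element of $\widehat{\mathbb{C}}^{m\times m}$, that block multiplication of dual complex matrices works entrywise as in the commutative case, and that Proposition \ref{Prop-2} covers both block lower and block upper triangular forms. With these in hand, the argument mirrors the classical Schur complement proof line for line.
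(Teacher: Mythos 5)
Your proof is correct and takes essentially the same approach as the paper: the paper factors the block matrix as a three-term LDU product (lower unitriangular, block diagonal, upper unitriangular) and then applies Propositions \ref{AB-Product} and \ref{Prop-2}, while you fold the diagonal and upper factors into a single block upper triangular matrix, which is an immaterial variation of the same Schur-complement argument.
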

\begin{proof}
Since $$\left(
\begin{array}{cc}
A&B\\
C&D\\
\end{array}
\right)=\left(
\begin{array}{cc}
I&O\\
CA^{-1}&I\\
\end{array}
\right)\left(
\begin{array}{cc}
A&O\\
O&D-CA^{-1}B\\
\end{array}
\right)\left(
\begin{array}{cc}
I&A^{-1}B\\
O&I\\
\end{array}
\right),
$$
the desired formula follows from Proposition \ref{AB-Product} and Proposition \ref{Prop-2}.
\end{proof}

\section{Eigenvalues and characteristic roots of dual number matrices}\label{Eigen-DNMat}

Similar to complex matrices, the eigenvalues and eigenvectors of dual complex matrices were introduced, which have applications in brain science \cite{QACLL22,WDW24}. For $A\in \widehat{\mathbb{C}}^{m\times m}$, if there are $\lambda\in \widehat{\mathbb{C}}$, and ${\bf x}\in \widehat{\mathbb{C}}^m$ with  ${\bf x}$ being appreciable, such that $A{\bf x} = \lambda{\bf x}$, then we say that $\lambda$ is an eigenvalue of $A$, with ${\bf x}$ as an associated eigenvector. 
We have the following propositions and theorem, which can be found in \cite{QC23}.

\begin{Prop}\label{A-similar-B} Let $A, B\in \widehat{\mathbb{C}}^{m\times m}$. If $A\sim B$, i.e., $A = P^{-1}BP$ for some invertible matrix $P\in \widehat{\mathbb{C}}^{m\times m}$, and $\lambda\in \widehat{\mathbb{C}}$ is an eigenvalue of $A$ with an
eigenvector ${\bf x}\in \widehat{\mathbb{C}}^m$. Then $\lambda$ is an eigenvalue of $B$ with an eigenvector $P{\bf x}$.
\end{Prop}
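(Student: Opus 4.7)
The approach is the classical one-line change of variables, with the single subtle point being the verification that the new vector $P\mathbf{x}$ still qualifies as an eigenvector under the dual complex definition, i.e., that it is appreciable. The plan is to start from the eigenvalue relation $A\mathbf{x}=\lambda\mathbf{x}$, substitute $A=P^{-1}BP$ to get $P^{-1}BP\mathbf{x}=\lambda\mathbf{x}$, and then left-multiply by $P$ to arrive at $B(P\mathbf{x})=\lambda(P\mathbf{x})$. Commuting $\lambda$ past $P$ is legitimate because $\lambda\in\widehat{\mathbb{C}}$ is a dual complex scalar, and scalar multiplication commutes with the matrix-vector product (dual complex multiplication is commutative, as exploited throughout Section \ref{Deter-DNMat}).

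The step that requires a (short) argument is showing that $P\mathbf{x}$ is appreciable, since otherwise it would not serve as an eigenvector of $B$ in the sense of the definition recalled just before the proposition. I would write $P=P_{\sf st}+P_{\sf in}\epsilon$ and $\mathbf{x}=\mathbf{x}_{\sf st}+\mathbf{x}_{\sf in}\epsilon$, so that $(P\mathbf{x})_{\sf st}=P_{\sf st}\mathbf{x}_{\sf st}$. Since $P$ is invertible, by the remark following Proposition \ref{A-det-Prop-1} its standard part $P_{\sf st}$ is invertible, and since $\mathbf{x}$ is appreciable we have $\mathbf{x}_{\sf st}\neq 0$. Therefore $(P\mathbf{x})_{\sf st}=P_{\sf st}\mathbf{x}_{\sf st}\neq 0$, i.e., $P\mathbf{x}$ is appreciable.

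Putting these two pieces together yields $B(P\mathbf{x})=\lambda(P\mathbf{x})$ with $P\mathbf{x}$ appreciable, which is exactly the statement that $\lambda$ is an eigenvalue of $B$ with eigenvector $P\mathbf{x}$. There is no real obstacle here; the only thing to be careful about is to invoke the appreciability of $\mathbf{x}$ and invertibility of $P_{\sf st}$ explicitly, so the conclusion lives inside the eigenvalue framework of Section \ref{Eigen-DNMat} rather than just being a formal algebraic identity.
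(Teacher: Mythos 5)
Your proof is correct and is the natural (essentially the only reasonable) argument: the paper itself does not supply a proof for this proposition, stating only that it can be found in \cite{QC23}. You correctly identify the one non-trivial point, namely that $P\mathbf{x}$ must be appreciable to qualify as an eigenvector, and you settle it properly by noting that invertibility of $P$ forces invertibility of $P_{\sf st}$ (via the remark after Proposition \ref{A-det-Prop-1}), so $(P\mathbf{x})_{\sf st}=P_{\sf st}\mathbf{x}_{\sf st}\neq 0$. The commutation $P(\lambda\mathbf{x})=\lambda(P\mathbf{x})$ is indeed legitimate because $\lambda$ is a dual complex scalar and $\widehat{\mathbb{C}}$ is commutative. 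Nothing is missing.
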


\begin{Prop}
Let $A=A_{\sf st}+A_{\sf in}\epsilon\in \widehat{\mathbb{C}}^{m\times m}$, and $\lambda_{\sf st}\in \mathbb{C}$ be an  eigenvalue
of $A_{\sf st}$. Then there exist $\lambda_{\sf in}\in \mathbb{C}$ and ${\bf x}_{\sf st}, {\bf x}_{\sf in}\in \mathbb{C}$
such that $\lambda=\lambda_{\sf st}+\lambda_{\sf in}\epsilon$ is an eigenvalue of $A$ with an eigenvector ${\bf x}={\bf x}_{\sf st}+{\bf x}_{\sf in}\epsilon$, if and only if ${\bf x}_{\sf st}$ is an eigenvector of $A_{\sf st}$, and
\begin{equation}
A_{\sf st}{\bf x}_{\sf st}\in {\rm Span}(D),\end{equation}
where $D=(A_{\sf st}-\lambda_{\sf st}I_m,{\bf x}_{\sf st})$. If furthermore we
have ${\bf x}_{\sf st}\not\in {\rm Span}(A_{\sf st}-\lambda_{\sf st}I_m)$,
then $\lambda_{\sf in}$, hence $\lambda$, is unique with such ${\bf x}_{\sf st}$. Otherwise, $\lambda_{\sf in}$ can be any complex number.
\end{Prop}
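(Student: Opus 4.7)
The plan is to substitute the dual decompositions $A = A_{\sf st} + A_{\sf in}\epsilon$, $\lambda = \lambda_{\sf st} + \lambda_{\sf in}\epsilon$, and $\vx = \vx_{\sf st} + \vx_{\sf in}\epsilon$ into the eigenvalue equation $A\vx = \lambda\vx$, then expand using $\epsilon^2=0$ and match the standard and infinitesimal parts. This yields two coupled equations: $A_{\sf st}\vx_{\sf st} = \lambda_{\sf st}\vx_{\sf st}$ at the standard level, and $(A_{\sf st}-\lambda_{\sf st}I_m)\vx_{\sf in} = \lambda_{\sf in}\vx_{\sf st}-A_{\sf in}\vx_{\sf st}$ at the infinitesimal level. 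Since $\vx$ is appreciable, $\vx_{\sf st}\neq\mathbf{0}$, and the first equation is precisely the statement that $\vx_{\sf st}$ is a complex eigenvector of $A_{\sf st}$ corresponding to $\lambda_{\sf st}$.

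Next, I would rearrange the infinitesimal equation as $A_{\sf in}\vx_{\sf st} = \lambda_{\sf in}\vx_{\sf st} + (A_{\sf st}-\lambda_{\sf st}I_m)(-\vx_{\sf in})$. Solvability for some pair $(\lambda_{\sf in},\vx_{\sf in})$ is therefore equivalent to the membership of $A_{\sf in}\vx_{\sf st}$ in the sum of the column space of $A_{\sf st}-\lambda_{\sf st}I_m$ and the line spanned by $\vx_{\sf st}$, which is exactly the column space of the augmented matrix $D=(A_{\sf st}-\lambda_{\sf st}I_m,\vx_{\sf st})$ appearing in the statement. Conversely, given any decomposition $A_{\sf in}\vx_{\sf st}=\alpha\vx_{\sf st}+(A_{\sf st}-\lambda_{\sf st}I_m)\vy$, reading off $\lambda_{\sf in}:=\alpha$ and $\vx_{\sf in}:=-\vy$ produces a genuine dual eigenpair; this covers the ``if'' direction.

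For the uniqueness clause, suppose two admissible decompositions exist, say $\alpha\vx_{\sf st}+(A_{\sf st}-\lambda_{\sf st}I_m)\vy=\alpha'\vx_{\sf st}+(A_{\sf st}-\lambda_{\sf st}I_m)\vy'$. Subtracting gives $(\alpha-\alpha')\vx_{\sf st}=(A_{\sf st}-\lambda_{\sf st}I_m)(\vy'-\vy)$. Under the hypothesis $\vx_{\sf st}\notin {\rm Span}(A_{\sf st}-\lambda_{\sf st}I_m)$, the right-hand side lies in that column space while the left-hand side does only when $\alpha=\alpha'$, forcing $\lambda_{\sf in}$ to be unique. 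If instead $\vx_{\sf st}\in {\rm Span}(A_{\sf st}-\lambda_{\sf st}I_m)$, write $\vx_{\sf st}=(A_{\sf st}-\lambda_{\sf st}I_m)\vz$; then for any proposed shift $\lambda_{\sf in}\mapsto\lambda_{\sf in}+\delta$, the correction $\vx_{\sf in}\mapsto\vx_{\sf in}-\delta\vz$ keeps the infinitesimal equation satisfied, so $\lambda_{\sf in}$ is unconstrained.

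The only mild obstacle I anticipate is the bookkeeping around the augmented-column-space condition: one must be careful that the emerging condition governs solvability of the infinitesimal equation in $\vx_{\sf in}$ and simultaneously delimits the admissible scalars $\lambda_{\sf in}$. Once the equivalence between solvability and the column-space membership is cleanly stated, the remaining content is routine linear algebra over $\mathbb{C}$ applied componentwise to the standard and infinitesimal parts.
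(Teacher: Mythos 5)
The paper does not actually prove this proposition; it is quoted from \cite{QC23} without proof. Your blind attempt therefore stands on its own, and it is essentially correct: expanding $A\vx=\lambda\vx$ over $\widehat{\mathbb{C}}$, separating standard and infinitesimal parts, and reducing the infinitesimal equation $(A_{\sf st}-\lambda_{\sf st}I)\vx_{\sf in}=\lambda_{\sf in}\vx_{\sf st}-A_{\sf in}\vx_{\sf st}$ to the column-space membership of $A_{\sf in}\vx_{\sf st}$ in ${\rm Span}(D)$ is exactly the intended argument, as is the dichotomy on whether $\vx_{\sf st}\in {\rm Span}(A_{\sf st}-\lambda_{\sf st}I_m)$.

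Two small remarks. First, you write the condition with $A_{\sf in}\vx_{\sf st}$, whereas the statement as printed reads $A_{\sf st}\vx_{\sf st}\in{\rm Span}(D)$. Since $\vx_{\sf st}$ is an eigenvector of $A_{\sf st}$, we have $A_{\sf st}\vx_{\sf st}=\lambda_{\sf st}\vx_{\sf st}$, and $\vx_{\sf st}$ is itself a column of $D$, so the condition as printed is vacuously true; it is almost certainly a typo for $A_{\sf in}\vx_{\sf st}$, which is what your derivation produces, and you should say so explicitly rather than passing it over in silence. Second, in the last case, with $\vx_{\sf st}=(A_{\sf st}-\lambda_{\sf st}I_m)\vz$ and $\vx_{\sf in}=-\vy$, the compensating shift is $\vx_{\sf in}\mapsto\vx_{\sf in}+\delta\vz$, not $\vx_{\sf in}-\delta\vz$: substituting into $(A_{\sf st}-\lambda_{\sf st}I_m)\vx_{\sf in}=\lambda_{\sf in}\vx_{\sf st}-A_{\sf in}\vx_{\sf st}$ adds $\delta\vx_{\sf st}$ to both sides. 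This sign slip does not affect the conclusion, but should be fixed.
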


\begin{Thm} \label{Diagable} Let $A=A_{\sf st}+A_{\sf in}\epsilon\in \widehat{\mathbb{C}}^{m\times m}$. Suppose that $A_{\sf st}$ is diagonalizable, i.e.,
$A_{\sf st}=QB_{\sf st}Q^{-1}$ for some invertible matrix $Q\in \mathbb{C}^{m\times m}$, where $B_{\sf st}={\rm diag}(\lambda_{1\sf st}I_{m_1},\ldots,\lambda_{k\sf st}I_{m_k})$ and $\lambda_{1\sf st},\ldots,\lambda_{k\sf st}\in \mathbb{C}$ are distinct complex numbers. 
Then, $A = P^{-1}J_0P$ for some invertible matrix
$P\in \widehat{\mathbb{C}}^{m\times m}$. Here, $J_0$ is the Jordan form of $A$ as expressed by $
J_0={\rm diag}\big(\lambda_{1\sf st}I_{m_1}+J_{1}\epsilon,\lambda_{2\sf st}I_{m_2}+J_{2}\epsilon,\ldots,\lambda_{k\sf st}I_{m_k}+J_{k}\epsilon,\big)
$, where $J_{i}={\rm diag}\big(J_{i1}(\lambda_{i1\sf in}),J_{i2}(\lambda_{i2\sf in}),\ldots,J_{it_i}(\lambda_{it_i\sf in})\big)$ and
$$
J_{ij}(\lambda_{ij\sf in})=\left(
\begin{array}{ccccc}
\lambda_{ij\sf in}&1&\cdots&0&0\\
0&\lambda_{ij\sf in}&\cdots&0&0\\
\vdots&\vdots&\ddots&\vdots&\vdots\\
0&0&\cdots&\lambda_{ij\sf in}&1\\
0&0&\cdots&0&\lambda_{ij\sf in}
\end{array}
\right)_{m_{ij}\times m_{ij}},
$$
with $\sum_{j=1}^{t_i}m_{ij}=m_i$ for $i=1,\ldots,k$.
In fact, the matrix $A$ has $\sum_{i=1}^kt_i$ distinct eigenvalues $\lambda_{i\sf st}+\lambda_{ij\sf in}\epsilon$ for $i=1,2,\ldots,k$ and $j=1,2,\ldots,t_i$. 
\end{Thm}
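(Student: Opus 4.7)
The plan is to construct the invertible dual complex matrix $P$ as a composition of three successive similarity transformations, each simplifying the structure further, and then read off the eigenvalues from $J_0$. First, I would lift $Q\in\mathbb{C}^{m\times m}$ to $\widehat{\mathbb{C}}^{m\times m}$ as a dual complex matrix with zero infinitesimal part; since $\det Q\neq 0$, Proposition~\ref{A-det-Prop-1} gives invertibility in $\widehat{\mathbb{C}}^{m\times m}$. A direct computation yields $Q^{-1}AQ=B_{\sf st}+(Q^{-1}A_{\sf in}Q)\epsilon$, so writing $\tilde A_{\sf in}:=Q^{-1}A_{\sf in}Q$ in block form $(\tilde A_{ij\sf in})$ conforming with $B_{\sf st}$ reduces the problem to the case where the standard part is already block-diagonal with blocks of sizes $m_1,\ldots,m_k$ and pairwise distinct scalars $\lambda_{1\sf st},\ldots,\lambda_{k\sf st}$ on the diagonal.

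\medskip

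Second, I would eliminate the off-diagonal blocks of $\tilde A_{\sf in}$ by a similarity $P_1=I+S\epsilon$ with $S\in\mathbb{C}^{m\times m}$. Using (\ref{Inver-mat}) and $\epsilon^2=0$,
$$P_1^{-1}(B_{\sf st}+\tilde A_{\sf in}\epsilon)P_1 \;=\; B_{\sf st}+(\tilde A_{\sf in}+B_{\sf st}S-SB_{\sf st})\epsilon.$$
Block-wise, $(B_{\sf st}S-SB_{\sf st})_{ij}=(\lambda_{i\sf st}-\lambda_{j\sf st})S_{ij}$, so setting $S_{ij}=-\tilde A_{ij\sf in}/(\lambda_{i\sf st}-\lambda_{j\sf st})$ for $i\neq j$ and $S_{ii}=0$ yields a valid $S$, and the transformed matrix becomes ${\rm diag}(\lambda_{1\sf st}I_{m_1}+\tilde A_{11\sf in}\epsilon,\ldots,\lambda_{k\sf st}I_{m_k}+\tilde A_{kk\sf in}\epsilon)$. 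Third, for each $i$, I would invoke the classical complex Jordan decomposition $\tilde A_{ii\sf in}=R_i J_i R_i^{-1}$ with $J_i={\rm diag}(J_{i1}(\lambda_{i1\sf in}),\ldots,J_{it_i}(\lambda_{it_i\sf in}))$. Since $\lambda_{i\sf st}I_{m_i}$ commutes with $R_i$, similarity by the block-diagonal matrix $R={\rm diag}(R_1,\ldots,R_k)$, lifted to $\widehat{\mathbb{C}}^{m\times m}$, produces precisely $J_0$. Composing the three similarities yields an invertible $P\in\widehat{\mathbb{C}}^{m\times m}$ with $A=P^{-1}J_0 P$.

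\medskip

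For the eigenvalue assertion, by Proposition~\ref{A-similar-B} it suffices to exhibit the claimed eigenvalues inside $J_0$. The $(i,j)$-Jordan block $\lambda_{i\sf st}I_{m_{ij}}+J_{ij}(\lambda_{ij\sf in})\epsilon$ is upper triangular with every diagonal entry equal to $\lambda_{i\sf st}+\lambda_{ij\sf in}\epsilon$, and its first standard basis vector satisfies $(\lambda_{i\sf st}I_{m_{ij}}+J_{ij}(\lambda_{ij\sf in})\epsilon)\ve_1=(\lambda_{i\sf st}+\lambda_{ij\sf in}\epsilon)\ve_1$. Embedded at the appropriate location in $\widehat{\mathbb{C}}^m$, this vector is appreciable and hence an eigenvector of $J_0$; so each $\lambda_{i\sf st}+\lambda_{ij\sf in}\epsilon$ is an eigenvalue of $A$, with the total count equal to $\sum_{i=1}^k t_i$.

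\medskip

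The step I expect to be the main obstacle is the Sylvester-type block decoupling in the second stage: the equation $(\lambda_{i\sf st}-\lambda_{j\sf st})S_{ij}=-\tilde A_{ij\sf in}$ is solvable precisely because the standard eigenvalues are pairwise distinct, and no analogous cancellation is possible on the diagonal since $\lambda_{i\sf st}-\lambda_{i\sf st}=0$. This is exactly why the diagonal blocks $\tilde A_{ii\sf in}$ persist through the reduction and must be treated separately by the classical complex Jordan form, contributing the infinitesimal eigenvalues $\lambda_{ij\sf in}$ that enrich the Jordan structure of $A$ beyond that of $A_{\sf st}$.
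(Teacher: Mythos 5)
The paper does not actually prove this theorem; it cites it directly from \cite{QC23}, so there is no in-paper proof to compare against. Your constructive argument is essentially the standard approach one would expect: conjugate by the ordinary complex similarity $Q$ to make the standard part block-diagonal, remove the off-diagonal infinitesimal blocks by a similarity $I+S\epsilon$ via a Sylvester-type equation (solvable precisely because the $\lambda_{i\sf st}$ are pairwise distinct), and then apply the classical complex Jordan form to each remaining diagonal block $\tilde A_{ii\sf in}$. The algebra in each of the three steps is correct, including the key identity $P_1^{-1}(B_{\sf st}+\tilde A_{\sf in}\epsilon)P_1=B_{\sf st}+(\tilde A_{\sf in}+B_{\sf st}S-SB_{\sf st})\epsilon$, and your identification of the unit vectors $\ve_1$ of each Jordan sub-block as eigenvectors of $J_0$.

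One small gap: for the final assertion you show that each $\lambda_{i\sf st}+\lambda_{ij\sf in}\epsilon$ is an eigenvalue, but you do not show that these exhaust the eigenvalues of $A$, which the word \emph{has} in the theorem implicitly requires. The missing half is short: if $J_0\vx=\lambda\vx$ with $\vx=\vx_{\sf st}+\vx_{\sf in}\epsilon$ appreciable, then comparing standard parts forces $\lambda_{\sf st}=\lambda_{i\sf st}$ for a unique $i$ and $\vx_{\sf st}$ supported on block $i$; comparing infinitesimal parts restricted to block $i$ (where $(J_0)_{\sf st}-\lambda_{\sf st}I$ vanishes) gives $J_i\vx_{\sf st}^{(i)}=\lambda_{\sf in}\vx_{\sf st}^{(i)}$, so $\lambda_{\sf in}$ is an eigenvalue of $J_i$, i.e.\ one of the $\lambda_{ij\sf in}$. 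You should also note that "$\sum_{i=1}^k t_i$ \emph{distinct} eigenvalues" tacitly assumes the $\lambda_{ij\sf in}$ are pairwise distinct for each fixed $i$ (equivalently, $\tilde A_{ii\sf in}$ has a single Jordan block per eigenvalue); this is an imprecision inherited from the theorem statement itself, not a flaw in your construction, but it is worth flagging rather than silently adopting.
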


From Theorem 4.1 in \cite{QL21}, we have the following eigenvalue decomposition theorem of dual complex Hermitian matrices.
\begin{Thm}\label{HUDec} Let $A=A_{\sf st}+A_{\sf in}\epsilon\in \widehat{\mathbb{C}}^{m\times m}$ be Hermitian. Then there are unitary matrix $U\in \widehat{\mathbb{C}}^{m\times m}$ and a diagonal matrix $\Sigma\in \widehat{\mathbb{C}}^{m\times m}$ such that $A=U\Sigma U^*$, where
\begin{equation}\label{Sigmn}\Sigma := {\rm diag} (\mu_1+\mu_{1,1}\epsilon,\ldots,\mu_1+\mu_{1,k_1}\epsilon, \mu_2+\mu_{2,1}\epsilon,\ldots,\mu_r+\mu_{r,k_r}\epsilon),
\end{equation}
where $\mu_1>\mu_2>\ldots>\mu_r$ are real numbers, $\mu_i$ is a $k_i$-multiple eigenvalue of $A_{\sf st}$, $\mu_{i,1}\geq \mu_{i,2}\geq\ldots\geq\mu_{i,k_i}$ are also real numbers. Counting possible multiplicities $\mu_{i,j}$, the form $\Sigma$ is unique.
\end{Thm}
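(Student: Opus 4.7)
The plan is to adapt the construction used for dual quaternion Hermitian matrices in Theorem 4.1 of \cite{QL21} to the dual complex setting, producing the unitary $U$ as a product of three unitaries of progressively finer granularity. Writing $A=A_{\sf st}+A_{\sf in}\epsilon$, the Hermitian hypothesis $A^*=A$ forces both $A_{\sf st}$ and $A_{\sf in}$ to be Hermitian in $\mathbb{C}^{m\times m}$.

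First, I would invoke the classical complex spectral theorem to obtain a unitary $U_0\in\mathbb{C}^{m\times m}$ with $U_0^*A_{\sf st}U_0=D:={\rm diag}(\mu_1 I_{k_1},\ldots,\mu_r I_{k_r})$, where $\mu_1>\mu_2>\cdots>\mu_r$ are the distinct real eigenvalues of $A_{\sf st}$. Conjugating, $U_0^*AU_0=D+\widetilde{A}_{\sf in}\epsilon$ with $\widetilde{A}_{\sf in}:=U_0^*A_{\sf in}U_0$ Hermitian.

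Second, I would eliminate the off-diagonal blocks of $\widetilde{A}_{\sf in}$ by a unitary of the form $V=I+V_{\sf in}\epsilon$. The relation $V^*V=I+(V_{\sf in}+V_{\sf in}^*)\epsilon$ (using $\epsilon^2=0$) shows that such a $V$ is unitary if and only if $V_{\sf in}$ is skew-Hermitian, and a direct expansion yields
\begin{equation*}
V^*(D+\widetilde{A}_{\sf in}\epsilon)V=D+(DV_{\sf in}-V_{\sf in}D+\widetilde{A}_{\sf in})\epsilon.
\end{equation*}
Partitioning conformally with $D$, the $(i,j)$-block of the parenthesized expression is $(\mu_i-\mu_j)V_{ij}+\widetilde{A}_{ij}$. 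For $i\neq j$ I set $V_{ij}:=-\widetilde{A}_{ij}/(\mu_i-\mu_j)$, well defined since the $\mu_i$ are distinct; the skew-Hermitian requirement $V_{ji}=-V_{ij}^*$ is automatically consistent because $\widetilde{A}_{ji}=\widetilde{A}_{ij}^*$, so with diagonal blocks $V_{ii}=0$ the matrix $V_{\sf in}$ is skew-Hermitian. This leaves $V^*U_0^*AU_0V=D+{\rm diag}(\widetilde{A}_{11},\ldots,\widetilde{A}_{rr})\epsilon$.

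Third, I would diagonalize each Hermitian block $\widetilde{A}_{ii}\in\mathbb{C}^{k_i\times k_i}$ by a complex unitary $W_i$, ordering its real eigenvalues $\mu_{i,1}\geq\cdots\geq\mu_{i,k_i}$, and set $W:={\rm diag}(W_1,\ldots,W_r)$; since $W$ commutes with $D$, conjugation by $W$ produces the required $\Sigma$ and $U:=U_0VW$ is the total unitary. For uniqueness, the $\mu_i$ and $k_i$ are determined as the distinct eigenvalues of $A_{\sf st}$ and their multiplicities, while the multiset $\{\mu_{i,j}\}_{j=1}^{k_i}$ is the spectrum of the compression of $A_{\sf in}$ onto the $\mu_i$-eigenspace of $A_{\sf st}$, a unitary invariant of $A$. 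I expect the main subtlety to lie in the repeated-eigenvalue case: when $k_i>1$, the off-diagonal-killing step of stage two is blind to the diagonal blocks $\widetilde{A}_{ii}$, which must be diagonalized separately in stage three, and this is precisely what produces the characteristic nested two-level structure of $\Sigma$ (one real part $\mu_i$ per distinct standard eigenvalue, with a cluster of $k_i$ infinitesimal corrections $\mu_{i,j}$ attached to it).
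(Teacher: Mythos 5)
Your construction is correct and complete: the near-identity unitary $V=I+V_{\sf in}\epsilon$ is well defined because the $\mu_i$ are distinct, the skew-Hermitian consistency of $V_{\sf in}$ uses exactly $\widetilde{A}_{\sf in}^*=\widetilde{A}_{\sf in}$, the block-diagonal $W$ commutes with $D$ as needed, and your identification of $\{\mu_{i,j}\}_{j=1}^{k_i}$ as the spectrum of the compression of $A_{\sf in}$ to the $\mu_i$-eigenspace of $A_{\sf st}$ does give uniqueness (after noting, as you do implicitly, that the commutator $K\Sigma_{\sf st}-\Sigma_{\sf st}K$ coming from the infinitesimal part of any diagonalizing unitary vanishes on the diagonal blocks). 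The paper, however, does not actually prove this theorem: it simply asserts that the statement follows from Theorem 4.1 of \cite{QL21}, the dual quaternion version of the same decomposition, specialized to $\widehat{\mathbb{C}}$. Your three-stage construction (diagonalize $A_{\sf st}$, annihilate the off-diagonal infinitesimal blocks with a skew-Hermitian correction, diagonalize the residual Hermitian diagonal blocks) is precisely the argument underlying that cited theorem, so in spirit the approaches coincide; but you carry it out from scratch inside the dual complex algebra. One genuine advantage of your direct route is that citing the quaternion theorem leaves a bridging step implicit: Theorem 4.1 of \cite{QL21} produces a dual \emph{quaternion} unitary $U$, and it is not literally immediate that this $U$ may be chosen in $\widehat{\mathbb{C}}^{m\times m}$ when $A$ is dual complex. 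Your construction produces a dual complex $U$ automatically, so no such remark is needed.
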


From Theorem \ref{HUDec}, we know that the diagonal elements $\mu_i+\mu_{i,j}\epsilon~(i=1,2,\ldots r,~j=1,2,\ldots,k_i)$ in $\Sigma$ are exactly all eigenvalues of $A$. It is obvious that $k_1+k_2+\ldots+k_r=m$. For the sake of simplicity, we write the $m$ eigenvalues of the dual complex Hermitian  matrix $A\in \widehat{\mathbb{C}}^{m\times m}$ as $\lambda_1\geq\lambda_2\geq\ldots\geq\lambda_m$ in descending order. By the definition of positive (semi-)definiteness  and Theorem \ref{HUDec}, we have the following proposition.

\begin{Prop} \label{Positive-SemiDef} Let $A\in \widehat{\mathbb{C}}^{m\times m}$ be Hermitian. We have the following conclusions.


(a) If $A$ is positive semidefinite, then $PAP^*$ is positive semidefinite  for any $P\in \widehat{\mathbb{C}}^{k\times m}$.

(b) $A$ is positive semidefinite (definite) if and only if its eigenvalues are nonnegative (positive).
\end{Prop}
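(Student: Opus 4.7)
\smallskip

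\noindent\textbf{Proof plan for Proposition \ref{Positive-SemiDef}.}

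For part (a), the plan is direct. First I would observe that $PAP^*$ is Hermitian, since $(PAP^*)^* = PA^*P^* = PAP^*$. Then, for any $\vy \in \widehat{\mathbb{C}}^k$, I set $\vx := P^*\vy \in \widehat{\mathbb{C}}^m$, so that
\[
\vy^*(PAP^*)\vy = (P^*\vy)^*A(P^*\vy) = \vx^*A\vx \ge 0,
\]
using the defining inequality for positive semidefiniteness of $A$. This gives positive semidefiniteness of $PAP^*$ immediately.

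For part (b), my approach is to invoke the spectral decomposition of Theorem \ref{HUDec}, which yields a unitary $U\in\widehat{\mathbb{C}}^{m\times m}$ and a real diagonal $\Sigma = \mathrm{diag}(\lambda_1,\ldots,\lambda_m)$ (dual numbers) with $A = U\Sigma U^*$, where the $\lambda_i$ are the eigenvalues of $A$. For the forward direction (``PSD $\Rightarrow$ nonnegative eigenvalues''), I would apply part (a) with $P = \ve_i^* U^* \in \widehat{\mathbb{C}}^{1\times m}$ (equivalently, test against the $i$-th column $\vu_i$ of $U$), obtaining $\vu_i^* A \vu_i = \lambda_i \ge 0$ for each $i$. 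For the reverse direction, given any $\vx \in \widehat{\mathbb{C}}^m$, I set $\vy := U^*\vx = (y_1,\ldots,y_m)^\top$ and compute
\[
\vx^*A\vx = \vy^*\Sigma\vy = \sum_{i=1}^m \lambda_i\, \overline{y_i}y_i = \sum_{i=1}^m \lambda_i |y_i|^2.
\]
Since each $|y_i|^2 \in \widehat{\mathbb{R}}_+$, Proposition \ref{P6.5} gives $\lambda_i|y_i|^2 \in \widehat{\mathbb{R}}_+$ whenever $\lambda_i \ge 0$, and the sum of nonnegative dual numbers is nonnegative, so $\vx^*A\vx \ge 0$.

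The positive-definite case follows the same template but requires extra care, which I expect to be the main obstacle. I need to exploit that $U$ is unitary (so its standard part is invertible), hence $\vx$ being appreciable is equivalent to $\vy = U^*\vx$ being appreciable. When $\vy$ is appreciable some component $y_{i_0}$ has nonzero standard part, making $|y_{i_0}|^2$ appreciable; combined with $\lambda_{i_0} > 0$ this forces $\lambda_{i_0}|y_{i_0}|^2$ to be positive and appreciable, and the remaining terms are nonnegative, so the sum $\vx^*A\vx$ is positive and appreciable. The converse again uses test vectors $\vu_i$, which are appreciable columns of the unitary $U$, forcing $\lambda_i = \vu_i^*A\vu_i > 0$ with appropriate appreciability. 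The delicate step is tracking how positivity and appreciability interact under the order on $\widehat{\mathbb{R}}$ defined in Section \ref{Prelim}, but once the spectral decomposition is in hand this reduces to a componentwise bookkeeping argument.
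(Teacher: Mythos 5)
The paper does not spell out a proof of this proposition---it simply asserts that it follows ``by the definition of positive (semi-)definiteness and Theorem~\ref{HUDec}''---so your plan is effectively reconstructing the intended argument, and for the positive semidefinite statements it does so correctly: part~(a) by direct substitution, and the PSD half of part~(b) via the decomposition $A = U\Sigma U^*$, testing against the columns $\vu_i$ of $U$ in one direction and expanding $\vx^*A\vx = \sum_i \lambda_i|y_i|^2$ with $\vy = U^*\vx$ in the other, together with Proposition~\ref{P6.5} and closure of $\widehat{\mathbb{R}}_+$ under addition.

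One step in the positive-definite backward direction needs repair. You write that ``$\lambda_{i_0} > 0$ forces $\lambda_{i_0}|y_{i_0}|^2$ to be positive and appreciable,'' but under the total order on $\widehat{\mathbb{R}}$ a positive dual number need not be appreciable: $\lambda_{i_0} = \epsilon$ satisfies $\lambda_{i_0} > 0$, and then $\lambda_{i_0}|y_{i_0}|^2$ is infinitesimal no matter how $|y_{i_0}|^2$ behaves, so $\vx^*A\vx$ would fail the appreciability clause in the paper's definition of positive definiteness. The equivalence in~(b) must therefore be read as: $A$ is positive definite iff all its eigenvalues are positive \emph{and appreciable}. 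Your forward argument in fact already produces this---each $\vu_i$ is an appreciable unit vector, so the definition of positive definiteness directly yields that $\lambda_i = \vu_i^*A\vu_i$ is positive and appreciable---and you should carry that appreciability of $\lambda_{i_0}$ into the backward step; combined with the appreciability of $|y_{i_0}|^2$ (which you correctly obtain from $(U^*)_{\sf st}$ being invertible) the product $\lambda_{i_0}|y_{i_0}|^2$ and hence the full sum is then positive and appreciable. You acknowledge the ``delicate step'' of tracking positivity versus appreciability, but the intermediate claim as stated is wrong in general, and the plan does not actually resolve it.
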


Now we introduce the conception of the characteristic polynomial of dual complex matrices. For given $A=A_{\sf st}+A_{\sf in}\epsilon\in \widehat{\mathbb{C}}^{m\times m}$, define $f_A:\widehat{\mathbb{C}}\rightarrow\widehat{\mathbb{C}}$ by
$$
f_A(\lambda)={\rm det}[\lambda I-A],
$$
where $\lambda=\lambda_{\sf st}+\lambda_{\sf in}\epsilon\in \widehat{\mathbb{C}}$. We call the $f_A$ above the characteristic polynomial of $A$, and call $\lambda=\lambda_{\sf st}+\lambda_{\sf in}\epsilon\in \widehat{\mathbb{C}}$ satisfying $f_A(\lambda)=0$ the characteristic root of $A$. By Proposition \ref{A-det-Prop-1}, we know that
\begin{equation}\label{Det-Lambda}
f_A(\lambda)=g_A(\lambda_{\sf st})+\left(\sum_{i=1}^m{\rm det}[\tilde{A}(i,\lambda)]\right)\epsilon,
\end{equation}
where $g_A(\lambda_{\sf st}):={\rm det}[\lambda_{\sf st} I-A_{\sf st}]$ and
$$
\begin{array}{l}
\tilde{A}(i,\lambda)\\
=\left(\begin{array}{ccccccc}
\lambda_{\sf st}-(a_{11})_{\sf st}&\cdots&-(a_{1i-1})_{\sf st}&-(a_{1i})_{\sf st}&-(a_{1i+1})_{\sf st}&\cdots&-(a_{1m})_{\sf st}\\
\vdots&\ddots&\vdots&\vdots&\vdots&\ddots&\vdots\\
-(a_{i-11})_{\sf st}&\cdots&\lambda_{\sf st}-(a_{i-1i-1})_{\sf st}&-(a_{i-1i})_{\sf st}&-(a_{i-1i+1})_{\sf st}&\cdots&-(a_{i-1m})_{\sf st}\\
-(a_{i1})_{\sf in}&\cdots&-(a_{ii-1})_{\sf in}&\lambda_{\sf in}-(a_{ii})_{\sf in}&-(a_{ii+1})_{\sf in}&\cdots&-(a_{im})_{\sf in}\\
-(a_{i+11})_{\sf st}&\cdots&-(a_{i+1i-1})_{\sf st}&-(a_{i+1i})_{\sf st}&\lambda_{\sf st}-(a_{i+1i+1})_{\sf st}&\cdots&-(a_{i+1m})_{\sf st}\\
\vdots&\ddots&\vdots&\vdots&\vdots&\ddots&\vdots\\
-(a_{m1})_{\sf st}&\cdots&-(a_{mi-1})_{\sf st}&-(a_{mi})_{\sf st}&-(a_{mi+1})_{\sf st}&\cdots&\lambda_{\sf st}-(a_{mm})_{\sf st}\\
\end{array}
\right)
\end{array}
$$
for $i=1,2,\ldots,m$. It is obvious that
\begin{equation}\label{Ai-tilde}
{\rm det}\big[\tilde{A}(i,\lambda)\big]=\lambda_{\sf in}{\rm det}\big[{\lambda_{\sf st}I_{m-1}-(A_{ii})_{\sf st}}\big]+{\rm det}\big[\bar A(i)\big],
\end{equation}
where $A_{ii}$ is the complementary submatrix of $A$ with respect to $a_{ii}$ and
$$
\begin{array}{l}
\bar{A}(i)\\
=\left(\begin{array}{ccccccc}
\lambda_{\sf st}-(a_{11})_{\sf st}&\cdots&-(a_{1i-1})_{\sf st}&-(a_{1i})_{\sf st}&-(a_{1i+1})_{\sf st}&\cdots&-(a_{1m})_{\sf st}\\
\vdots&\ddots&\vdots&\vdots&\vdots&\ddots&\vdots\\
-(a_{i-11})_{\sf st}&\cdots&\lambda_{\sf st}-(a_{i-1i-1})_{\sf st}&-(a_{i-1i})_{\sf st}&-(a_{i-1i+1})_{\sf st}&\cdots&-(a_{i-1m})_{\sf st}\\
-(a_{i1})_{\sf in}&\cdots&-(a_{ii-1})_{\sf in}&-(a_{ii})_{\sf in}&-(a_{ii+1})_{\sf in}&\cdots&-(a_{im})_{\sf in}\\
-(a_{i+11})_{\sf st}&\cdots&-(a_{i+1i-1})_{\sf st}&-(a_{i+1i})_{\sf st}&\lambda_{\sf st}-(a_{i+1i+1})_{\sf st}&\cdots&-(a_{i+1m})_{\sf st}\\
\vdots&\ddots&\vdots&\vdots&\vdots&\ddots&\vdots\\
-(a_{m1})_{\sf st}&\cdots&-(a_{mi-1})_{\sf st}&-(a_{mi})_{\sf st}&-(a_{mi+1})_{\sf st}&\cdots&\lambda_{\sf st}-(a_{mm})_{\sf st}\\
\end{array}
\right).
\end{array}
$$
Moreover, it is obvious that $f_A(\lambda)=0$ if and only if
$$
\left\{
\begin{array}{l}
{\rm det}[\lambda_{\sf st} I-A_{\sf st}]=0\\
\displaystyle\sum_{i=1}^m{\rm det}[\tilde{A}(i,\lambda)]=0,
\end{array}
\right.
$$
which is equivalent to, by (\ref{Ai-tilde}), that
\begin{equation}\label{Charact-Root-e1}
\left\{
\begin{array}{l}
{\rm det}[\lambda_{\sf st} I-A_{\sf st}]=0\\
\displaystyle\lambda_{\sf in}\sum_{i=1}^m{\rm det}\big[{\lambda_{\sf st}I_{m-1}-(A_{ii})_{\sf st}}\big]=-\sum_{i=1}^m{\rm det}\big[\bar A(i)\big].
\end{array}
\right.
\end{equation}
It is easy to see that $\frac{dg_A(\lambda_{\sf st})}{d\lambda_{\sf st}}=\sum_{i=1}^m{\rm det}\big[{\lambda_{\sf st}I_{m-1}-(A_{ii})_{\sf st}}\big]$. Hence, (\ref{Charact-Root-e1}) can be written as
\begin{equation}\label{Charact-Root-e2}
\left\{
\begin{array}{l}
g_A(\lambda_{\sf st})=0\\
\displaystyle\lambda_{\sf in}\frac{dg_A(\lambda_{\sf st})}{d\lambda_{\sf st}}+\tau(\lambda_{\sf st})=0,
\end{array}
\right.
\end{equation}
where $\tau(\lambda_{\sf st})=\sum_{i=1}^m{\rm det}\big[\bar A(i)\big]$.

\begin{example}A dual number matrix $A$, which has no characteristic root at all.
Let $A=A_{\sf st} + A_{\sf in}\epsilon$, where
$$
A_{\sf st}=\left(\begin{array}{cc}
1&1\\
0&1
\end{array}\right)~~~{\rm and}~~~A_{\sf in}=\left(\begin{array}{cc}
0&0\\
1&0
\end{array}\right).
$$
Then $f_A(\lambda)={\rm det}[\lambda I-A]=(\lambda_{\sf st}-1)^2+2(\lambda_{\sf st}-1)\lambda_{\sf in}\epsilon-\epsilon$. Consequently, all possible characteristic roots $\lambda=\lambda_{\sf st}+\lambda_{\sf in}\epsilon$ of $A$ must satisfy $\lambda_{\sf st}=1$. Hence, $f_A(\lambda)=-\epsilon\neq 0$.  i.e.,
$A$ has no characteristic root at all.
\end{example}

\begin{Prop}\label{Eig-Char}
Let $A=A_{\sf st}+A_{\sf in}\epsilon\in \widehat{\mathbb{C}}^{m\times m}$, and $\lambda_{\sf st}\in \mathbb{R}$ be an eigenvalue of $A_{\sf st}$. If $\lambda_{\sf st}$ is a single eigenvalue of $A_{\sf st}$, then $\lambda=\lambda_{\sf st}+\lambda_{\sf in}\epsilon$ is a characteristic root of $A$, where $\lambda_{\sf in}$ is uniquely determined by $\lambda_{\sf in}=-\tau(\lambda_{\sf st})/\frac{dg_A(\lambda_{\sf st})}{d\lambda_{\sf st}}$; If $\lambda_{\sf st}$ is an eigenvalue of $A_{\sf st}$ with algebraic multiplicity $k\geq 2$, then for any $b\in \mathbb{R}$, $\lambda=\lambda_{\sf st}+b\epsilon$ is  characteristic root of $A$.
\end{Prop}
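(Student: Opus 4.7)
The plan is to invoke the equivalent system (\ref{Charact-Root-e2}), which states that $\lambda=\lambda_{\sf st}+\lambda_{\sf in}\epsilon$ is a characteristic root of $A$ if and only if both $g_A(\lambda_{\sf st})=0$ and $\lambda_{\sf in}\,g_A'(\lambda_{\sf st})+\tau(\lambda_{\sf st})=0$ hold. Since the hypothesis supplies the first equation automatically, the entire proof reduces to understanding the second equation under the multiplicity assumption on $\lambda_{\sf st}$.

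In the simple-eigenvalue case, I would argue that because $\lambda_{\sf st}$ is a simple root of the complex polynomial $g_A$, we have $g_A'(\lambda_{\sf st})\neq 0$; hence the second equation is a non-degenerate scalar linear equation in $\lambda_{\sf in}$, yielding the unique solution $\lambda_{\sf in}=-\tau(\lambda_{\sf st})/g_A'(\lambda_{\sf st})$ advertised in the statement.

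For the multiplicity $k\ge 2$ case, the factor $(\lambda-\lambda_{\sf st})^k$ divides $g_A(\lambda)$, so $g_A'(\lambda_{\sf st})=0$ and the second equation collapses to the single requirement $\tau(\lambda_{\sf st})=0$. To verify this, I would Laplace-expand each $\det[\bar A(i)]$ along its $i$-th row (the one carrying the infinitesimal-part entries) to obtain $\tau(\lambda_{\sf st})=-\operatorname{tr}\!\big(\operatorname{adj}(\lambda_{\sf st}I-A_{\sf st})\,A_{\sf in}\big)$, so that the problem reduces to showing the adjugate vanishes. Since a multiple eigenvalue drops the rank of $\lambda_{\sf st}I-A_{\sf st}$ to at most $m-2$, every $(m{-}1)\times(m{-}1)$ minor is zero, forcing $\operatorname{adj}(\lambda_{\sf st}I-A_{\sf st})=O$ and hence $\tau(\lambda_{\sf st})=0$; then any $b\in\mathbb{R}$ satisfies the second equation.

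The main obstacle is exactly this last step: passing from the algebraic-multiplicity hypothesis to the rank drop needed for the adjugate to vanish. For Hermitian $A_{\sf st}$, or more generally when $\lambda_{\sf st}$ is semisimple, algebraic and geometric multiplicities coincide and the argument goes through verbatim. In the fully general non-semisimple situation this implication fails—indeed the preceding $2\times 2$ Jordan-block example already exhibits algebraic multiplicity $2$ with $\tau(\lambda_{\sf st})\neq 0$ and no characteristic root—so the proof of the multiplicity case really does rely on an implicit semisimplicity/diagonalizability assumption on $\lambda_{\sf st}$, which I would either flag explicitly or absorb into the statement before carrying out the adjugate-vanishing computation above.
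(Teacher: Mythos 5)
Your proposal follows the same route as the paper: reduce to the equivalent system (\ref{Charact-Root-e2}) and treat the two cases separately. The simple-eigenvalue case is handled identically, via $g_A'(\lambda_{\sf st})\neq 0$. For the multiplicity case the paper argues as you do: it asserts ${\rm rank}(\lambda_{\sf st}I-A_{\sf st})=m-k\leq m-2$, concludes ${\rm rank}(\bar A(i))\leq m-1$ so that ${\rm det}[\bar A(i)]=0$ for every $i$, hence $\tau(\lambda_{\sf st})=0$ and any $b\in\mathbb{R}$ works. Your adjugate-trace reformulation $\tau(\lambda_{\sf st})=-\operatorname{tr}\bigl(\operatorname{adj}(\lambda_{\sf st}I-A_{\sf st})A_{\sf in}\bigr)$ is a tidier way to package the same rank-drop argument.

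More importantly, the obstruction you flag is real, and it is a genuine error in the paper. The identity ${\rm rank}(\lambda_{\sf st}I-A_{\sf st})=m-k$ holds with $k$ the \emph{geometric} multiplicity, not the algebraic one; for a non-semisimple eigenvalue of algebraic multiplicity $\ge 2$ the rank can be $m-1$, in which case $\operatorname{adj}(\lambda_{\sf st}I-A_{\sf st})$ is nonzero of rank one and $\tau(\lambda_{\sf st})$ need not vanish. The paper's own example immediately preceding this proposition --- $A_{\sf st}$ a $2\times 2$ Jordan block with $A_{\sf in}=\left(\begin{smallmatrix}0&0\\1&0\end{smallmatrix}\right)$ --- is precisely such a counterexample: $\lambda_{\sf st}=1$ has algebraic multiplicity $2$, yet $f_A(1+b\epsilon)=-\epsilon\neq 0$ for all $b$, so the second clause of the proposition as stated is false. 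Your diagnosis that the hypothesis should be geometric multiplicity (equivalently, a semisimplicity assumption on $\lambda_{\sf st}$) is exactly the needed correction, and it is what actually makes both the paper's rank claim and your adjugate-vanishing step valid.
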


\begin{proof}
If $\lambda_{\sf st}$ is a single eigenvalue of $A_{\sf st}$, then $\frac{dg_A(\lambda_{\sf st})}{d\lambda_{\sf st}}\neq0$. Consequently, the first conclusion comes from (\ref{Charact-Root-e2}). If $\lambda_{\sf st}$ ia an eigenvalue of $A_{\sf st}$ with algebraic multiplicity $k\geq 2$, then we know that $\frac{dg_A(\lambda_{\sf st})}{d\lambda_{\sf st}}=0$. Moreover, under the given condition on $A_{\sf st}$, we know that ${\rm rank}(\lambda_{\sf st}I-A_{\sf st})=m-k\leq m-2$, which implies ${\rm rank}(\bar A(i))\leq m-1$ for every $i\in [m]$. Hence,
${\rm det}\big[\bar A(i)\big]=0$ for every $i\in [m]$. Consequently, the second expression in (\ref{Charact-Root-e2}) holds for any $\lambda_{\sf in}=b\in \mathbb{R}$.
\end{proof}

 For given $A\in \widehat{\mathbb{C}}^{m\times m}$ with $A_{\sf st}$ being diagonalizable, by Theorem \ref{Diagable}, we have $f_A(\lambda)={\rm det}[\lambda I-A]={\rm det}[\lambda I-J_0]=\prod_{i=1}^k\big(\prod_{j=1}^{t_i}(\lambda-\lambda_{i\sf st}-\lambda_{ij\sf in}\epsilon)\big)$, which mens that the eigenvalues of $A\in \widehat{\mathbb{C}}^{m\times m}$ are necessarily the characteristic roots of $A$. In particular, when $A\in \widehat{\mathbb{C}}^{m\times m}$ is Hermitian, due to the fact that $A_{\sf st}$ is diagonalizable, we know that the eigenvalue set $\{\lambda_1,\lambda_2,\ldots,\lambda_m\}$ of $A$ must be a subset of the set composed of characteristic roots of $A$. The following theorem indicates that the similar conclusion also holds for general dual complex matrices.

\begin{Thm}\label{Eig-Root}
Let $A=A_{\sf st}+A_{\sf in}\epsilon\in \widehat{\mathbb{C}}^{m\times m}$. The eigenvalues of $A$ must be the characteristic roots of $A$.
\end{Thm}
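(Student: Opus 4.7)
My plan is to exploit that $\widehat{\mathbb{C}}$, equipped with the operations in (\ref{e1}), is a commutative ring, and then invoke the classical adjugate identity, which holds over any commutative ring. Suppose $\lambda \in \widehat{\mathbb{C}}$ is an eigenvalue of $A$ with an appreciable eigenvector $\mathbf{x} \in \widehat{\mathbb{C}}^m$, so that $(\lambda I - A)\mathbf{x} = \mathbf{0}$. The goal is to prove $f_A(\lambda) = {\rm det}[\lambda I - A] = 0$.

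First I would define the adjugate ${\rm adj}(M) \in \widehat{\mathbb{C}}^{m\times m}$ of $M := \lambda I - A$ by ${\rm adj}(M)_{ij} := (-1)^{i+j} {\rm det}[M^{(ji)}]$, where $M^{(ji)}$ denotes the $(m-1)\times(m-1)$ submatrix obtained by deleting the $j$th row and $i$th column of $M$, and then establish the identity
\begin{equation*}
{\rm adj}(M)\, M \;=\; {\rm det}[M]\, I_m.
\end{equation*}
This identity is a universal polynomial identity in $\mathbb{Z}[x_{11}, \ldots, x_{mm}]$, and hence valid in any commutative ring; since $\widehat{\mathbb{C}}$ is commutative it transfers directly. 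If instead one wishes to remain entirely inside the framework of the paper, the same identity follows from the multilinearity of rows (Property \ref{Five-Prop-Det}(c)(e)) together with the vanishing of ${\rm det}$ on any matrix having two equal rows, which is a consequence of the row-swap rule (Property \ref{Five-Prop-Det}(b)) and the characteristic-zero nature of $\widehat{\mathbb{C}}$, via the standard cofactor-expansion argument.

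Right-multiplying ${\rm adj}(M)\, M = {\rm det}[M]\, I_m$ by $\mathbf{x}$ and using $M\mathbf{x} = \mathbf{0}$ yields ${\rm det}[\lambda I - A]\, \mathbf{x} = \mathbf{0}$. Because $\mathbf{x}$ is appreciable, at least one component $x_{j_0}$ satisfies $(x_{j_0})_{\sf st} \neq 0$, so $x_{j_0}$ is appreciable and hence invertible in $\widehat{\mathbb{C}}$. Cancelling $x_{j_0}$ in the scalar equation ${\rm det}[\lambda I - A]\, x_{j_0} = 0$ produces ${\rm det}[\lambda I - A] = 0$, i.e.\ $f_A(\lambda)=0$, so $\lambda$ is a characteristic root of $A$.

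The only step that requires genuine care is justifying the adjugate identity inside $\widehat{\mathbb{C}}^{m\times m}$; the crucial input is the commutativity of dual complex multiplication, which is precisely why this clean argument works here and why a different approach will be needed in the noncommutative dual quaternion setting of Section \ref{DET-DQ}. Everything else is a routine consequence of the fact that appreciability in $\widehat{\mathbb{C}}$ implies invertibility.
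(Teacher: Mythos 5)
Your proof is correct and takes a genuinely different route from the paper. The paper extends a basis of the $\bar\lambda$-eigenspace to an invertible matrix $U$ (citing a basis-extension result from an earlier paper), conjugates $A$ to a block upper triangular matrix with top-left block $\bar\lambda I_s$, and then factors $f_A(\lambda) = (\lambda-\bar\lambda)^s\,{\rm det}[\lambda I_{m-s}-B_{22}]$ using Propositions \ref{Prop-2} and \ref{AB-Product}; this yields, as a byproduct, that the geometric multiplicity $s$ of $\bar\lambda$ appears as the exponent of the factor $(\lambda-\bar\lambda)$. Your adjugate argument, by contrast, is shorter and more elementary: it bypasses eigenspace dimension, basis selection, and extension of bases in $\widehat{\mathbb{C}}^m$ entirely, relying only on the universal Cramer identity ${\rm adj}(M)M={\rm det}[M]I_m$ over the commutative ring $\widehat{\mathbb{C}}$ (derivable, as you observe, from Property \ref{Five-Prop-Det}) together with the invertibility of the appreciable component $x_{j_0}$ to cancel it in ${\rm det}[M]\,x_{j_0}=0$. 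What the paper's route buys is the explicit factorization of $f_A$; what yours buys is brevity and independence from any discussion of module bases over the non-field ring $\widehat{\mathbb{C}}$.
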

\begin{proof}
Take any eigenvalue $\bar{\lambda}$ of $A$. Denote by $\mathbb{U}$ the set of all eigenvectors of $A$ with respect to $\bar{\lambda}$. It is obvious that $\mathbb{U}$ is a subspace in $\widehat{\mathbb{C}}^m$ with $s:={\rm dim}(\mathbb{U})\geq 1$. Let $\{{\bf u}_1,\ldots, {\bf u}_s\}$ be a basis of $\mathbb{U}$. Denote $U_1=({\bf u}_1,\ldots, {\bf u}_s)$. It is clear that $U_1\in \widehat{\mathbb{C}}^{m\times s}$. By Proposition 3.4 in \cite{LHQ22}, there exists a dual number matrix $U_2\in \widehat{\mathbb{C}}^{m\times (m-s)}$ such that $U=(U_1, U_2)$ is invertible. It is obvious that $U^{-1}AU=B$, where
$$
B=\left(
\begin{array}{cc}
\bar{\lambda} I_s&B_{12}\\
O&B_{22}
\end{array}\right)~{\rm with}~~B_{12}\in \widehat{\mathbb{C}}^{s\times (m-s)}~~{\rm and}~~ B_{22}\in \widehat{\mathbb{C}}^{(m-s)\times (m-s)}.
$$
By Proposition \ref{AB-Product} and Proposition \ref{Prop-2}, we know $$f_A(\lambda)={\rm det}[\lambda I-A]={\rm det}[\lambda I-B]=(\lambda-\bar{\lambda})^s{\rm det}[\lambda I_{m-s}-B_{22}].$$
Consequently, we obtain the desired conclusion and complete the proof.
\end{proof}
 From Propositions \ref{Eig-Char} and Theorem \ref{Eig-Root}, we know that, for given $A\in \widehat{\mathbb{C}}^{m\times m}$ with $A_{\rm st}$ having $m$ distinct eigenvalues, then the eigenvalues of $A$ are exactly the same as its characteristic roots. The following example shows that a characteristic root of a general dual number matrix $A$ may not necessarily be the eigenvalue of $A$.
 \begin{example}
 Consider $A=A_{\sf st}+A_{\sf in}\epsilon\in \widehat{\mathbb{C}}^{3\times 3}$ with
 $$
 A_{\sf st}=\left(\begin{array}{ccc}
 0&1&1\\
 1&0&1\\
 1&1&0
 \end{array}\right)~~~{\rm and}~~~A_{\sf in}=\left(\begin{array}{ccc}
 1&0&0\\
 0&1&0\\
 0&0&1
 \end{array}\right).
 $$It is easy to see that $f_A(\lambda)=(\lambda+1-\epsilon)^2(\lambda-2-\epsilon)$, which implies that all $\lambda=-1+b\epsilon$ are characteristic roots of $A$ for any $b\in \mathbb{R}$. It is obvious that an eigenvector of $A_{\sf st}$ with respect to the eigenvalue $\lambda_{\sf st}=-1$ is $(-1,1,0)^\top$. We claim that $\lambda=-1+b\epsilon$ with $b\neq 1$ is not an eigenvalue of $A$. In fact, if $\lambda=-1+b\epsilon$ is an eigenvalue of $A$, then the equation $A{\bf x}=\lambda {\bf x}$ must have a solution ${\bf x}=(-1,1,0)^\top+{\bf y}\epsilon$, which means $(A_{\sf st}-\lambda_{\sf st}I){\bf y}=(-b+1,b-1,0)^\top$ must have a solution. By the classical linear equation theory, we know $b=1$, which is a contradiction.
 \end{example}

By Theorem \ref{HUDec} and Proposition \ref{AB-Product}, we know that, when $A\in \widehat{\mathbb{C}}^{m\times m}$ is Hermitian , we have ${\rm det}[A]={\rm det}[U\Sigma U^*]={\rm det}[\Sigma]=\prod_{i=1}^m\lambda_i$, where the second equality is due to ${\rm det}[U]{\rm det}[U^*]=1$ which comes from the unitarity of $U$. However, for a general $A\in \widehat{\mathbb{C}}^{m\times m}$, this conclusion does not hold.

\begin{example}\label{Ex-1}
 Consider $A=A_{\sf st}+A_{\sf in}\epsilon$ with
 $$
 A_{\sf st}=\left(\begin{array}{ccc}
 -1&2&2\\
 3&-1&1\\
 2&2&-1
 \end{array}\right)~~~{\rm and}~~~A_{\sf in}=\left(\begin{array}{ccc}
 1&0&-1\\
 -2&0&3\\
 2&1&0
 \end{array}\right).
 $$
 It is easy to see that $g_A(\lambda_{\sf st})={\rm det}[\lambda_{\sf st}I-A_{\sf st}]=(\lambda_{\sf st}-3)(\lambda_{\sf st}+3)^2$. Moreover, it is easy to verify that $\lambda_1=3+(7/6)\epsilon$ is an eigenvalue of $A$ and the corresponding eigenvector is $\bar{\bf x}=(1,1,1)^\top-(1/2,5/12,0)^\top \epsilon$. However, by a simple computation, we know
 $$
 {\rm det}[\tilde{A}(1,\lambda_1)]=49/3,~~{\rm det}[\tilde{A}(2,\lambda_1)]=2,~~{\rm det}[\tilde{A}(3,\lambda_1)]=-55/3,
 $$
 which implies $f_A(\lambda_1)=0$ by (\ref{Det-Lambda}). Hence $\lambda_1$ is a characteristic root of $A$. Moreover, it is also easy to verify that, for every $a\in \mathbb{R}$, $\lambda_2=-3+a\epsilon$ is also an eigenvalue of $A$ and the corresponding eigenvector is $\hat{\bf x}=(1,-2,1)^\top+(-3a-1,(7/2)a+1,0)^\top \epsilon$. In this case, we call $\lambda_2$ an eigenvalue of $A$ with appreciable algebraic multiplicity $2$. By a simple computation, we know
 $$
 {\rm det}[\tilde{A}(1,\lambda_2)]=2a,~~{\rm det}[\tilde{A}(2,\lambda_2)]=0,~~{\rm det}[\tilde{A}(3,\lambda_2)]=-2a,
 $$
 which implies $f_A(\lambda_2)=0$ by (\ref{Det-Lambda}). Hence, $\lambda_2$ is a characteristic root of $A$ for every $a\in \mathbb{R}$. On the other hand, we have ${\rm det}[A]=27+12\epsilon$, and $\lambda_1\lambda_2\lambda_3=27+(21/2-9a-9b)\epsilon$, where $\lambda_3=-3+b\epsilon$ for $b\in \mathbb{R}$. Hence ${\rm det}[A]\neq\lambda_1\lambda_2\lambda_3$.
 \end{example}


\begin{Prop} For any given $A,B\in \widehat{\mathbb{C}}^{m\times m}$, it holds that $f_{AB}(\lambda)=f_{BA}(\lambda)$.
\end{Prop}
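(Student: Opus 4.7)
The plan is to adapt the classical block-matrix proof, which transfers verbatim to the commutative ring $\widehat{\mathbb{C}}$ because Property \ref{Five-Prop-Det}, Proposition \ref{Prop-2}, and Proposition \ref{AB-Product} supply every determinant identity needed.

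First I would form the $2m\times 2m$ dual complex block matrix
$$
M(\lambda)=\left(\begin{array}{cc}\lambda I_m & A \\ B & I_m\end{array}\right).
$$
Left-multiplication by $\left(\begin{array}{cc} I_m & -A \\ 0 & I_m\end{array}\right)$, whose determinant is $1$ by Proposition \ref{Prop-2}, zeros out the upper-right block, so Propositions \ref{AB-Product} and \ref{Prop-2} give ${\rm det}[M(\lambda)]={\rm det}[\lambda I_m-AB]=f_{AB}(\lambda)$ for every $\lambda\in\widehat{\mathbb{C}}$. When $\lambda$ is additionally appreciable it is invertible in $\widehat{\mathbb{C}}$, so the analogous left-multiplication by $\left(\begin{array}{cc} I_m & 0 \\ -\lambda^{-1}B & I_m\end{array}\right)$ zeros out the lower-left block, and after pulling $\lambda^{-1}$ out of each of the $m$ rows of the resulting lower-right block via Property \ref{Five-Prop-Det}(c) one obtains ${\rm det}[M(\lambda)]={\rm det}[\lambda I_m]\cdot\lambda^{-m}{\rm det}[\lambda I_m-BA]=f_{BA}(\lambda)$. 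Comparing the two expressions gives $f_{AB}(\lambda)=f_{BA}(\lambda)$ whenever $\lambda$ is appreciable.

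The main obstacle is then to remove the appreciability hypothesis: since $\widehat{\mathbb{C}}$ has zero divisors, no continuity or density argument is available verbatim. My plan here is a purely algebraic coefficient argument. By Definition \ref{Def-Det}, $p(\lambda):=f_{AB}(\lambda)-f_{BA}(\lambda)$ is a polynomial in $\lambda$ with coefficients $c_k=(c_k)_{\sf st}+(c_k)_{\sf in}\epsilon\in\widehat{\mathbb{C}}$. Writing $\lambda=\lambda_{\sf st}+\lambda_{\sf in}\epsilon$ with $\lambda_{\sf st}\neq 0$ and expanding via (\ref{e2}), the standard part of $p(\lambda)$ equals the ordinary complex polynomial $\sum_k (c_k)_{\sf st}\lambda_{\sf st}^k$ in $\lambda_{\sf st}$; its vanishing on $\mathbb{C}\setminus\{0\}$ forces $(c_k)_{\sf st}=0$ for every $k$. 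Feeding this back into the infinitesimal part and letting $\lambda_{\sf in}$ vary freely then forces $(c_k)_{\sf in}=0$ as well, so $p\equiv 0$ on all of $\widehat{\mathbb{C}}$, completing the proof.
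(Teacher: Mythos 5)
Your argument is correct, and it is a genuinely different route from the paper's. The paper proves the result via the SVD of dual complex matrices (Theorem \ref{SVD-DQM}): it factors $A=PDQ$ with $P,Q$ invertible and $D$ diagonal of a special form, reduces to comparing $f_{DG}$ and $f_{GD}$, and then evaluates both via a Schur-complement formula for ``$\lambda$ large enough'' -- implicitly invoking, without spelling out, the same polynomial-identity principle you make explicit. Your proof instead runs the classical $2m\times 2m$ block argument: forming $M(\lambda)=\left(\begin{smallmatrix}\lambda I & A\\ B & I\end{smallmatrix}\right)$ and eliminating in two ways shows directly that $\det[M(\lambda)]=f_{AB}(\lambda)$ always and $\det[M(\lambda)]=f_{BA}(\lambda)$ whenever $\lambda$ is appreciable, using only Properties \ref{Five-Prop-Det}, \ref{Prop-2}, and \ref{AB-Product}. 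You then carefully close the gap left by the non-invertible $\lambda$'s with a coefficient argument: both $f_{AB}$ and $f_{BA}$ are polynomials with coefficients in $\widehat{\mathbb{C}}$; taking standard parts forces all $(c_k)_{\sf st}=0$ from a complex polynomial vanishing on $\mathbb{C}\setminus\{0\}$, after which the infinitesimal part reduces to another complex polynomial identity forcing $(c_k)_{\sf in}=0$. What you gain over the paper's version is that you avoid the heavier SVD machinery and that the extension to infinitesimal $\lambda$ is fully justified rather than elided; the cost is negligible. One tiny cosmetic note: after you establish $(c_k)_{\sf st}=0$, the term involving $\lambda_{\sf in}$ in the infinitesimal part drops out entirely, so the conclusion $(c_k)_{\sf in}=0$ already follows from varying $\lambda_{\sf st}$ alone -- ``letting $\lambda_{\sf in}$ vary freely'' is not actually needed, but this does not affect correctness.
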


\begin{proof}
By Theorem \ref{SVD-DQM}, there exist dual complex unitary matrices $U,V\in \widehat{\mathbb{C}}^{m\times m}$, such that
\begin{equation}\label{mmSVDDQMEQ}
A=U\left(\begin{array}{cc}\Sigma_s&O\\
O&O
\end{array} \right)_{m\times m}V^*,
\end{equation}
where $\Sigma_s={\rm diag} (\sigma_1,\ldots, \sigma_r,\sigma_{r+1},\ldots,\sigma_{s})\in \widehat{\mathbb{C}}^{s\times s}$, with $\sigma_1\geq \ldots\geq\sigma_r$ being positive appreciable dual numbers, and $\sigma_{r+1}\geq \ldots\geq\sigma_s$ being positive infinitesimal dual numbers. Denote
$$
R={\rm diag}(\sqrt{\sigma_1},\ldots,\sqrt{\sigma_r},\sqrt{\sigma_{r+1\sf in}},\ldots,\sqrt{\sigma_{s\sf in}},I_{m-s})
$$
and $P=UR$ and Q=$RV^*$. It is obvious that $P, Q\in \widehat{\mathbb{C}}^{m\times m}$ are invertible. Moreover, it is easy to see that $A=PDQ$, where $D={\rm diag}(I_r,I_{s-r}\varepsilon,O_{(m-s)\times (m-s)})$. Consequently, it holds that
$$
AB=PDQBPP^{-1}=PDGP^{-1}~~~{\rm and}~~~BA=Q^{-1}QBPDQ=Q^{-1}GDQ,
$$
where $G:=QBP=\left(
\begin{array}{ccc}
G_{11}&G_{12}&G_{13}\\
G_{21}&G_{22}&G_{23}\\
G_{31}&G_{32}&G_{33}\\
\end{array}
\right)$. By the definition of the characteristic polynomial, we have $f_{AB}(\lambda)=f_{DG}(\lambda)$ and $f_{BA}(\lambda)=f_{GD}(\lambda)$. Moreover, it is not difficult to know that
$$
f_{DG}(\lambda)={\rm det}[\left(\begin{array}{cc}
\lambda I_r-G_{11}&-G_{12}\\
-G_{21}\epsilon&\lambda I_{s-r}-G_{22}\epsilon
\end{array}\right)]\lambda^{m-s},
$$
which implies, together with Propositions \ref{AB-Product} and \ref{ABCD}, that
$$
f_{DG}(\lambda)={\rm det}[\left(\begin{array}{cc}
\lambda I_r-G_{11}&O\\
O&\lambda I_{s-r}-G_{22}\epsilon-G_{21}(\lambda I_r-G_{11})^{-1}G_{12}\epsilon
\end{array}\right)]\lambda^{m-s},
$$
where $\lambda$ can be selected to be large enough such that $(\lambda I_r-G_{11})^{-1}$ exists.
Similarly, we can obtain
$$
f_{GD}(\lambda)={\rm det}[\left(\begin{array}{cc}
\lambda I_r-G_{11}&O\\
O&\lambda I_{s-r}-G_{22}\epsilon-G_{21}(\lambda I_r-G_{11})^{-1}G_{12}\epsilon
\end{array}\right)]\lambda^{m-s}.
$$
Hence, we know $f_{AB}(\lambda)=f_{BA}(\lambda)$ and complete the proof.
\end{proof}



\begin{Thm}\label{Det-A+B} Let $A,B\in \widehat{\mathbb{C}}^{m\times m}$ be Hermitian. If $A, B$ are positive semidefinite, then it holds that ${\rm det}[A+B]\geq {\rm det}[A]+{\rm det}[B]$.
\end{Thm}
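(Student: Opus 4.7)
The plan is to adapt the classical Minkowski-type proof to the dual complex setting. I would first dispose of the case in which $A$ is positive definite, meaning $A_{\sf st}$ is positive definite. Theorem \ref{HUDec} then provides $A = U\Sigma U^*$ with $\Sigma$ a diagonal dual matrix whose entries $\lambda_i$ are positive and appreciable, so each $\sqrt{\lambda_i}$ is a well-defined positive dual number and $A = PP^*$ for the invertible matrix $P := U\Sigma^{1/2}$. Setting $C := P^{-1}BP^{-*}$, which is readily checked to be Hermitian positive semidefinite, Proposition \ref{AB-Product} gives
\[
{\rm det}[A+B] = {\rm det}[P(I+C)P^*] = {\rm det}[A]\cdot{\rm det}[I+C],
\]
and the identity $B = PCP^*$ similarly yields ${\rm det}[B] = {\rm det}[A]\cdot{\rm det}[C]$. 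The task therefore reduces to the sub-lemma ${\rm det}[I+C] \geq 1 + {\rm det}[C]$.

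This sub-lemma would be proved by diagonalising $C = W\,{\rm diag}(d_1,\ldots,d_m)W^*$ with $d_i \geq 0$ via Theorem \ref{HUDec}, writing
\[
{\rm det}[I+C] - 1 - {\rm det}[C] = \prod_{i=1}^m(1+d_i) - 1 - \prod_{i=1}^m d_i = \sum_{k=1}^{m-1} e_k(d_1,\ldots,d_m),
\]
where $e_k$ denotes the elementary symmetric polynomial of degree $k$. Each $e_k(d)$ is a sum of products of nonnegative dual numbers, hence nonnegative by iterated application of Proposition \ref{P6.5}. This completes the positive-definite case: ${\rm det}[A+B] \geq {\rm det}[A] + {\rm det}[A]\cdot{\rm det}[C] = {\rm det}[A] + {\rm det}[B]$.

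For general PSD $A$ (where $A_{\sf st}$ may be singular), I would use the perturbation $A_\tau := A + \tau I$ with real $\tau > 0$. Since $(A_\tau)_{\sf st}$ is positive definite, the positive-definite case produces $p(\tau) := {\rm det}[A_\tau + B] - {\rm det}[A_\tau] - {\rm det}[B] \geq 0$ as a dual number for every $\tau>0$. Decomposing $p(\tau) = p_{\sf st}(\tau) + p_{\sf in}(\tau)\epsilon$, the standard part $p_{\sf st}(\tau)$ is the classical complex Minkowski difference for $A_{\sf st}+\tau I$ and $B_{\sf st}$, nonnegative at all $\tau\geq 0$; whenever $p_{\sf st}(0) > 0$, the dual-number order gives $p(0) > 0$ at once.

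The main obstacle is the borderline case $p_{\sf st}(0) = 0$, in which the dual-number ordering does not pass cleanly through the limit $\tau \to 0^+$ (a strict standard-part inequality at $\tau > 0$ provides no control over the infinitesimal part at $\tau = 0$). By the equality case of the classical complex Minkowski inequality, $p_{\sf st}(0) = 0$ forces either $A_{\sf st} = 0$, or $B_{\sf st} = 0$, or both $A_{\sf st}$ and $B_{\sf st}$ singular with a common null space $K = \ker(A_{\sf st}) \cap \ker(B_{\sf st})$. I would handle each subcase directly. If $A_{\sf st} = 0$ (symmetrically for $B_{\sf st}=0$), then by Proposition \ref{A-det-Prop-1} one has ${\rm det}[A] = 0$ (for $m\geq 2$) and row-linearity of ${\rm det}$ yields $({\rm det}[A+B] - {\rm det}[B])_{\sf in} = {\rm tr}({\rm adj}(B_{\sf st})\,A_{\sf in})$, which is nonnegative because ${\rm adj}(B_{\sf st})$ is PSD (as a limit of $\det(B_{\sf st}+\delta I)(B_{\sf st}+\delta I)^{-1}$ as $\delta\to 0^+$) and $A_{\sf in}$ is PSD. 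In the common-kernel subcase, a unitary change of basis sending $K$ to the first $k$ coordinates puts $A,B$ in block form with their standard parts supported only on the complementary block $(A_{11},B_{11})$, and Proposition \ref{A-det-Prop-1} together with the block structure reduces the inequality to the monotonicity ${\rm det}[A_{11}+B_{11}] \geq {\rm det}[A_{11}]$ (and $\geq {\rm det}[B_{11}]$) of the determinant on complex positive semidefinite matrices, which is immediate.
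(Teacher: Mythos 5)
Your proposal takes a genuinely different route from the paper, and the comparison is instructive. The paper diagonalizes $B=U\Sigma U^*$, sets $C=U^*AU$, and then applies the determinant-expansion formula (Proposition~\ref{Prop-+}) directly to $\det[C+\Sigma]$: because $\Sigma$ is diagonal only the principal cofactor terms survive, each is a product of a nonnegative $\prod\lambda_{i_l}$ with a complementary principal minor of the PSD matrix $C$, and every cross term is therefore nonnegative by Proposition~\ref{P6.5}. That argument needs no invertibility hypothesis and handles the entire PSD cone in one stroke.

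Your factorization $A=PP^*$ with $P=U\Sigma^{1/2}$ requires $A$ to be positive definite (so that $\Sigma^{1/2}$ and $P^{-1}$ exist as appreciable dual matrices); your sub-lemma $\det[I+C]\geq 1+\det[C]$ via elementary symmetric functions is correct and is essentially the same diagonal-expansion computation as the paper's, just conjugated differently. The genuine weak point is the passage from positive definite to positive semidefinite. You correctly diagnose that $p(\tau)\geq 0$ for all real $\tau>0$ does \emph{not} give $p(0)\geq 0$ under the dual-number total order when $p_{\sf st}(0)=0$, and that insight is right. But the patch you then propose is substantially heavier than what the paper does: it invokes, without proof, the equality-case trichotomy for the \emph{additive} inequality $\det(A_{\sf st}+B_{\sf st})=\det(A_{\sf st})+\det(B_{\sf st})$ (which is not the equality case of the $m$-th-root Minkowski inequality you cite, though the trichotomy you state is in fact correct for $m\geq 2$); it then requires separate hand computations in each subcase, including the observation that $A_{\sf st}=0$ forces $A_{\sf in}$ to be PSD and $\det[A]=0$, that $\operatorname{adj}(B_{\sf st})$ is PSD, and a block reduction in the common-kernel case whose stated form (``reduces to $\det[A_{11}+B_{11}]\geq\det[A_{11}]$'') glosses over what actually appears, namely a mixed inequality of the form $\bigl((A_{\sf in})_{11}+(B_{\sf in})_{11}\bigr)\det[\tilde A+\tilde B]\geq (A_{\sf in})_{11}\det[\tilde A]+(B_{\sf in})_{11}\det[\tilde B]$ when the common kernel has dimension one (with all three determinants trivially zero when it has dimension at least two). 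These pieces can all be filled in, so there is no fatal error, but the net effect is a much longer proof than the paper's, and the detour is avoidable: if instead of factoring $A$ you diagonalize one of the matrices and expand $\det[C+\Sigma]$ as in Proposition~\ref{Prop-+}, the nonnegativity of the cross terms holds regardless of whether the eigenvalues are appreciable, infinitesimal, or zero, and the PSD case is subsumed without any limit argument.
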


\begin{proof} It is easy to see that ${\rm det}[A], {\rm det}[B],{\rm det}[A+B]\in \widehat{\mathbb{R}}$, since $A,B$ are Hermitian. By Theorem \ref{HUDec}, there are unitary matrix $U\in \widehat{\mathbb{C}}^{m\times m}$ and a diagonal matrix $\Sigma\in \widehat{\mathbb{C}}^{m\times m}$ such that $B=U\Sigma U^*$, where $\Sigma := {\rm diag} (\lambda_1,\lambda_2,\ldots,\lambda_m)$. Notice that $\lambda_i\geq 0$ for any $i=1,2,\ldots,m$, which from the given condition that $B$ is positive semidefinite. Since $U$ is unitary, we have ${\rm det}[A+B]= {\rm det}[C+\Sigma]$, where $C=U^*AU$. Moreover, since $\Sigma$ is diagonal, by (\ref{A+B}), it holds that
\begin{equation}\label{ABC+}
\begin{array}{l}
{\rm det}[C+\Sigma]\\
={\rm det}[C]+\mathop{\sum}\limits_{1\leq i_1\leq m}\lambda_{i_1}{\rm det}\big[\hat{A}_C\left(\begin{array}{c}i_1\\i_1\end{array}\right)\big]+\mathop{\sum}\limits_{1\leq i_1<i_2\leq m}\lambda_{i_1}\lambda_{i_2}{\rm det}\big[\hat{A}_C\left(\begin{array}{c}i_1i_2\\i_1i_2\end{array}\right)\big]+\ldots\\
~~+\mathop{\sum}\limits_{1\leq i_1<\ldots <i_{m-1}\leq m}\lambda_{i_1}\cdots\lambda_{i_{m-1}}{\rm det}\big[\hat{A}_C\left(\begin{array}{c}i_1\cdots i_{m-1}\\i_1\cdots i_{m-1}\end{array}\right)\big]+\prod_{i=1}^m\lambda_i.
\end{array}
\end{equation}
Since $A$ is positive semidefinite, it is obvious that ${\rm det}\big[\hat{A}_C\left(\begin{array}{c}i_1\cdots i_{k}\\i_1\cdots i_{k}\end{array}\right)\big]\geq 0$ for any $k=1,2,\ldots, m-1$. Consequently, by (\ref{ABC+}) and $\lambda_i\geq 0$ for $i=1,2,\ldots,m$, it holds that $${\rm det}[C+\Sigma]\geq {\rm det}[C]+\prod_{i=1}^m\lambda_i={\rm det}[A]+{\rm det}[B],$$ where the inequality comes from Proposition \ref{P6.5}, and the last equality is due to ${\rm det}[C]={\rm det}[A]$ and  $\prod_{i=1}^m\lambda_i={\rm det}[B]$, which means that the desired result holds, since ${\rm det}[A+B]= {\rm det}[C+\Sigma]$. We complete the proof.
\end{proof}

\begin{Thm}\label{D-PosDet}
Let $A\in \widehat{\mathbb{C}}^{m\times m}$, $B\in \widehat{\mathbb{C}}^{n\times n}$ and $C\in \widehat{\mathbb{C}}^{m\times n}$. If $
D=\left(
\begin{array}{cc}
A&C\\
C^\top&B\\
\end{array}
\right)
$
is positive semidefinite, then we have
${\rm det}[D]\leq {\rm det}[A]{\rm det}[B]$.
\end{Thm}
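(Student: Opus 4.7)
The plan is to mimic the classical Schur-complement proof of Fischer's inequality, with the key determinantal step supplied by Theorem \ref{Det-A+B}.

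First, suppose $A$ is invertible (i.e., $A_{\sf st}$ is nonsingular). The factorization
\[
D=\begin{pmatrix} I&0\\ C^\top A^{-1}&I\end{pmatrix}\begin{pmatrix} A&0\\ 0&S\end{pmatrix}\begin{pmatrix} I&A^{-1}C\\ 0&I\end{pmatrix},\qquad S:=B-C^\top A^{-1}C,
\]
combined with Propositions \ref{AB-Product} and \ref{Prop-2} (equivalently, Proposition \ref{ABCD}), gives $\det[D]=\det[A]\cdot\det[S]$. The congruence shows that $S$ is PSD whenever $D$ is, and since $A$ is PSD and invertible, all its eigenvalues are positive appreciable dual numbers by Theorem \ref{HUDec}, so $A^{-1}$ and hence $C^\top A^{-1}C$ are PSD. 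Now $B=S+C^\top A^{-1}C$ is a sum of two PSD Hermitian matrices, so Theorem \ref{Det-A+B} yields
\[
\det[B]\ \ge\ \det[S]+\det[C^\top A^{-1}C]\ \ge\ \det[S],
\]
where the second inequality uses $\det[C^\top A^{-1}C]\ge 0$: this determinant is the product of nonnegative dual eigenvalues of a PSD matrix, and is therefore nonnegative by iterated application of Proposition \ref{P6.5}. Multiplying through by $\det[A]\ge 0$ and invoking Proposition \ref{P6.5} once more to preserve the dual inequality, we conclude $\det[A]\cdot\det[B]\ge\det[A]\cdot\det[S]=\det[D]$.

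For the general case where $A_{\sf st}$ is singular, introduce the perturbation $A(t):=A+tI_m$ and $D(t):=D+t\,\mathrm{diag}(I_m,O_n)$ for real $t>0$. Then $A(t)$ is positive definite, hence invertible, and $D(t)$ is PSD, so the previous case delivers $\det[D(t)]\le\det[A(t)]\cdot\det[B]$ for every $t>0$. The main obstacle will be passing to the limit $t\to 0^+$, since the dual-number total order is not automatically preserved under real limits: a strict inequality of standard parts at each $t>0$ could collapse to equality at $t=0$, leaving the infinitesimal parts uncontrolled. To handle this I would write the polynomial difference $h(t):=\det[A(t)]\cdot\det[B]-\det[D(t)]=F(t)+G(t)\epsilon$ and analyze its standard and infinitesimal parts separately: the classical complex Fischer inequality applied to the standard part $D_{\sf st}(t)$ gives $F(t)\ge 0$ for all $t\ge 0$, so $F(0)\ge 0$. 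In the critical subcase $F(0)=0$ with $\det A_{\sf st}\det B_{\sf st}>0$, the equality case of the classical Fischer inequality forces $C_{\sf st}=0$, and a direct calculation using $\epsilon^2=0$ then yields $\det[D]=\det[A]\cdot\det[B]$ exactly; the remaining subcase $\det A_{\sf st}\det B_{\sf st}=0$ is dispatched by a unitary congruence (from Theorem \ref{HUDec}) that diagonalizes the singular part of $A$ and reduces to a smaller instance in which the argument can be iterated.
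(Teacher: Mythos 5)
Your invertible-\(A\) argument is essentially the paper's proof verbatim: the Schur factorization, Proposition~\ref{ABCD} to get \({\rm det}[D]={\rm det}[A]{\rm det}[B-C^\top A^{-1}C]\), and Theorem~\ref{Det-A+B} applied to \(B=S+C^\top A^{-1}C\). So far so good.

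Where you part ways with the paper is the singular case, and here you have correctly put your finger on a real subtlety that the paper simply asserts away. The paper writes ``we only need to prove the inequality for the case \(D\) is positive definite'' with no justification, whereas the implicit perturbation argument behind that reduction does not obviously survive in the dual-number total order: as you observe, the strict inequality \(F(t)>0\) of standard parts for \(t>0\) can degenerate to \(F(0)=0\), at which point the infinitesimal parts \(G(0)\) must be compared and nothing in the \(t>0\) data controls them. Your instinct to treat \(h(t)=F(t)+G(t)\epsilon\) componentwise is the right one. However, your sketch of how to close the gap does not actually close it. The ``critical subcase \(F(0)=0\) with \(\det A_{\sf st}\det B_{\sf st}>0\)'' is vacuous under the hypothesis you just imposed (\(A_{\sf st}\) singular forces \(\det A_{\sf st}=0\)); and even if you meant the subcase \(\det D_{\sf st}=\det A_{\sf st}\det B_{\sf st}>0\) before restricting to singular \(A_{\sf st}\), that case is already covered by your invertible argument. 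The only genuinely remaining case is \(\det A_{\sf st}\det B_{\sf st}=0\), for which both sides of the inequality have zero standard part and one must prove an inequality of infinitesimal parts directly. Your one-line ``unitary congruence that diagonalizes the singular part of \(A\) and iterates'' is not a proof: it does not explain why the reduction terminates, nor why positive semidefiniteness of \(D\) forces the off-diagonal blocks paired with the singular part of \(A\) to be infinitesimal (which is what actually kills the problematic cross-terms modulo \(\epsilon^2=0\)). To make this rigorous you would need to (i) diagonalize \(A\) by a unitary congruence as \({\rm diag}(A_1,A_2)\) with \(A_1\) positive definite and \(A_2\) infinitesimal, (ii) show \(D\) PSD forces the corresponding block \(C_2\) of \(C\) to be infinitesimal, (iii) observe that if \(A_2\) has order \(\ge 2\), every term in the expansion of \({\rm det}[D]\) contains at least two infinitesimal factors, hence \({\rm det}[D]=0\), and (iv) handle the residual order-one case by a direct computation. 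As written, neither the paper nor your proposal carries this out; you deserve credit for spotting that there is something to carry out.
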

\begin{proof}
Since $D$ is positive semidefinite, it is obvious that $A,B$ are positive semidefinite, which implies ${\rm det}[A]\geq 0$, ${\rm det}[B]\geq 0$ and ${\rm det}[D]\geq 0$. We only need to prove the inequality for the case $D$ is positive definite. 
 In this case, we know that $A$ is positive definite, which implies that $A^{-1}$ is also positive definite. Consequently, by Proposition \ref{ABCD}, it holds that ${\rm det}[D]= {\rm det}[A]{\rm det}[B-C^\top A^{-1}C]$. Since $C^\top A^{-1}C$ is positive semidefinite, by Theorem \ref{Det-A+B}, we know ${\rm det}[B-C^\top A^{-1}C]\leq {\rm det}[B]$, which implies, together with ${\rm det}[A]>0$, that ${\rm det}[D]\leq {\rm det}[A]{\rm det}[B]$. We complete the proof.
\end{proof}
From Theorem \ref{D-PosDet}, by using mathematical induction, we can easily prove that, if
$$
A=\left(
\begin{array}{cccc}
A_{11}&A_{12}&\cdots&A_{1k}\\
A_{21}&A_{22}&\cdots&A_{2k}\\
\vdots&\vdots&\ddots&\vdots\\
A_{k1}&A_{k2}&\cdots&A_{kk}\\
\end{array}
\right)
$$
is positive semidefinite, where $A_{ii}~(i\in [k])$ are square, then we have ${\rm det}[A]\leq\prod_{i=1}^k{\rm det}[A_{ii}]$. In particular, if $A=(a_{ij})\in \widehat{\mathbb{C}}^{m\times m}$ is positive semidefinite, then ${\rm det}[A]\leq a_{11}a_{22}\cdots a_{mm}$.

\begin{Thm}\label{Cauchy-Schwarz-Inequality}
For any given $A,B\in \widehat{\mathbb{C}}^{m\times n}$, it holds that
$$
\big|{\rm det}[A^* B]\big|^2\leq {\rm det}[A^* A]{\rm det}[B^* B].
$$
\end{Thm}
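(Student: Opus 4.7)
The plan is to mimic the classical Gram--matrix proof of the Cauchy--Schwarz determinant inequality by splitting $A^*A$ along the orthogonal projector onto the column space of $B$ and then applying the super-additivity of the determinant on Hermitian positive semidefinite dual complex matrices established in Theorem \ref{Det-A+B}.

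I would first dispose of the degenerate case where $B^*B$ is not invertible. Then $(B^*B)_{\sf st}=B_{\sf st}^*B_{\sf st}$ is singular, so $B_{\sf st}$ has rank strictly less than $n$, which forces $(A^*B)_{\sf st}=A_{\sf st}^*B_{\sf st}$ to be singular as well; hence $\det[A^*B]$ has vanishing standard part, and the definition (\ref{e7}) of the magnitude gives $|\det[A^*B]|^2=0$. Since the right-hand side is a product of determinants of Hermitian positive semidefinite dual complex matrices and is therefore a nonnegative dual number, the inequality is immediate. The symmetric situation where $A^*A$ is non-invertible is handled identically.

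In the generic case where $B^*B$ is invertible, set $P_B:=B(B^*B)^{-1}B^*$ and $Q_B:=I_m-P_B$. Direct computations give $P_B^*=P_B$, $P_B^2=P_B$, and the analogous identities for $Q_B$, so
\begin{equation*}
A^*A\ =\ A^*(Q_B+P_B)A\ =\ (Q_BA)^*(Q_BA)+(P_BA)^*(P_BA),
\end{equation*}
exhibiting $A^*A$ as a sum of two matrices of the form $X^*X$. Each such summand is Hermitian and positive semidefinite, since $y^*(X^*X)y=\sum_i|(Xy)_i|^2$ is a sum of nonnegative dual numbers. Theorem \ref{Det-A+B} now yields $\det[A^*A]\geq\det[(P_BA)^*(P_BA)]=\det[A^*P_BA]$ after dropping the nonnegative term $\det[(Q_BA)^*(Q_BA)]$ and applying $P_B^2=P_B$. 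Using the multiplicativity of the determinant (Proposition \ref{AB-Product}), the identity $\det[X^*]=\overline{\det[X]}$, and $\det[(B^*B)^{-1}]=(\det[B^*B])^{-1}$, one computes $\det[A^*P_BA]=|\det[A^*B]|^2/\det[B^*B]$; multiplying through by the positive appreciable quantity $\det[B^*B]$, which preserves the inequality by Proposition \ref{P6.5}, delivers the claimed bound. The only delicate point is confirming that $(Q_BA)^*(Q_BA)$ and $(P_BA)^*(P_BA)$ satisfy the positive semidefiniteness hypothesis of Theorem \ref{Det-A+B} in the dual complex sense, but this reduces to the elementary fact that $|z|^2=z\bar z$ is a nonnegative dual number for every $z\in\widehat{\mathbb{C}}$.
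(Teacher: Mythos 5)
Your proof is correct and rests on the same two pillars as the paper's — the superadditivity of the determinant on PSD Hermitian dual complex matrices (Theorem \ref{Det-A+B}) together with multiplicativity (Proposition \ref{AB-Product}) — but it organizes the argument differently. The paper forms the $2\times 2$ block Gram matrix with blocks $A^*A$, $B^*A$, $A^*B$, $B^*B$, applies a congruence and Proposition \ref{Positive-SemiDef}(a) to recognize the Schur complement $B^*B-A^*B(A^*A)^{-1}B^*A$ as PSD, and then bounds $\det[B^*B]$ from below; you instead construct the projector $P_B=B(B^*B)^{-1}B^*$ and split $A^*A=(Q_BA)^*(Q_BA)+(P_BA)^*(P_BA)$, making both summands manifestly of the form $X^*X$ and hence PSD with no congruence step, and bound $\det[A^*A]$ from below. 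These are mirror computations (your $A^*Q_BA$ is exactly the Schur complement of $B^*B$ in the Gram matrix, with the roles of $A$ and $B$ interchanged relative to the paper), so there is no gain in strength, though your version is slightly more self-contained since the positive semidefiniteness of the two summands is immediate. The degenerate case is also handled by a parallel but different route: the paper splits on $\mathrm{Arank}(A)<n$ and invokes rank lemmas from \cite{LHQ22} to conclude $\det[A^*B]$ is infinitesimal, whereas you split on non-invertibility of $B^*B$ and deduce the same from the elementary bound $\mathrm{rank}(A_{\sf st}^*B_{\sf st})\le\mathrm{rank}(B_{\sf st})<n$ on standard parts, avoiding the external citation. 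One small stylistic point: the remark about ``the symmetric situation where $A^*A$ is non-invertible'' is superfluous, since your two cases (invertibility or not of $B^*B$) already exhaust all possibilities and your generic argument places no invertibility requirement on $A^*A$.
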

\begin{proof}
We prove this conclusion for two cases: (i) ${\rm Arank}(A)<n$, and (ii) ${\rm Arank}(A)=n$. When (i) ${\rm Arank}(A)<n$, by Theorem \ref{SVD-DQM}, there exists a dual complex unitary matrix $V\in \widehat{\mathbb{C}}^{n\times n}$, such that $$A^* A=V\left(\begin{array}{cc}\Sigma_r^2&O\\
O&O
\end{array} \right)_{n\times n}V^*,$$
where $\Sigma_r={\rm diag} (\sigma_1,\ldots, \sigma_r,0\ldots,0)$ with $r={\rm Arank}(A)$. Since $r<n$, we know ${\rm det}[A^* A]=0$ from Proposition \ref{AB-Product}. On the other hand, by Theorem 4.8 and Proposition 4.5 in \cite{LHQ22}, it holds that ${\rm Arank}(A^* B)\leq{\rm Arank}(A)<n$. Consequently, by applying Theorem \ref{SVD-DQM} to $A^*B$, we know that ${\rm det}[A^* B]$ is infinitesimal, which implies $\big|{\rm det}[A^* B]\big|^2=0$. Hence, the desired conclusion holds for the case (i).

Now we prove the conclusion for the case (ii). Denote
$$
C=\left(\begin{array}{cc}
A^* A&B^* A\\
A^* B&B^* B
\end{array}\right).
$$
 It is obvious that $C$ is positive semidefinite, which implies ${\rm det}[C]\geq 0$. It is obvious that $A^* A$ is positive definite since ${\rm Arank}(A)=n$, which implies that $A^*A$ is invertible and the dual number ${\rm det}[A^* A]$ is  positive and appreciable. It is easy to verify that
$$
\begin{array}{l}
\left(\begin{array}{cc}
A^* A&O\\
O&B^* B-A^* B(A^* A)^{-1}B^* A
\end{array}\right)
=\left(\begin{array}{cc}
I&O\\
-A^* B(A^* A)^{-1}&I
\end{array}\right)C\left(\begin{array}{cc}
I&O\\
-A^* B(A^* A)^{-1}&I
\end{array}\right)^*.
\end{array}
$$
Consequently, since $C$ is positive semidefinite, by (a) in Proposition \ref{Positive-SemiDef}, we know that $B^* B-A^* B(A^* A)^{-1}B^* A$ is positive semidefinite, which implies, together with (b) in Proposition \ref{Positive-SemiDef} and Proposition \ref{P6.5}, that ${\rm det}[B^* B-A^* B(A^* A)^{-1}B^* A]\geq 0$. Since $B^* B-A^* B(A^* A)^{-1}B^* A$ and $A^* B(A^* A)^{-1}B^* A$ are positive semidefinite, by Proposition \ref{Det-A+B}, we have
$$
\begin{array}{lll}
{\rm det}[B^* B]&\geq &{\rm det}[B^* B-A^* B(A^* A)^{-1}B^* A]
+{\rm det}[A^* B(A^* A)^{-1}B^* A]\\
&\geq& {\rm det}[A^* B(A^* A)^{-1}B^* A]\\
&=&{\rm det}[A^* B]{\rm det}[(A^* A)^{-1}]{\rm det}[B^* A]\\
&=&{\rm det}[A^* B] {\rm det}[\overline{A^* B}]({\rm det}[A^* A])^{-1},
\end{array}
$$
which implies, together with Proposition \ref{P6.5}, that $\big|{\rm det}[A^* B]\big|^2\leq {\rm det}[A^* A]{\rm det}[B^* B]$, since ${\rm det}[\overline{A^* B}]=\overline{{\rm det}[A^* B]}$. We obtain the desired result and complete the proof.
\end{proof}

\begin{Thm}\label{Lemma3}
\cite{LQY22} Let $A\in \widehat{\mathbb{C}}^{m\times m}$ be Hermitian, and $\lambda_1\geq\lambda_2\geq\ldots\geq\lambda_m$ be eigenvalues of $A$. Then for $k=2,3,\ldots,m$, we have
\begin{equation}\label{MLambda_k}
\lambda_k=\min_{B\in \widehat{\mathbb{C}}^{m\times (k-1)}}\max_{{\bf x}\in N(B^*)\backslash[{\bf 0}]}\|{\bf x}\|^{-2}({\bf x}^* A{\bf x}),
\end{equation}
and it attains $\lambda_k$ when $B=[{\bf u}_1,{\bf u}_2,\ldots,{\bf u}_{k-1}]$;
and for $k=1,2,\ldots,m-1$, we have
\begin{equation}\label{MLambda_{n-k}}\lambda_{m-k}=\max_{C\in \widehat{\mathbb{C}}^{m\times k}}\min_{{\bf x}\in N(C^*)\backslash[{\bf 0}]}\|{\bf x}\|^{-2}({\bf x}^* A{\bf x}),
\end{equation}
and it attains the $\lambda_{m-k}$ when $C=[{\bf u}_{m-k+1},{\bf u}_{m-k+2},\ldots,{\bf u}_m]$, where ${\bf u}_i$ is the $i$th column of unitary matrix $U$ in  Theorem \ref{HUDec}. Here, for given $W\in \widehat{\mathbb{C}}^{p\times q}$, $N(W):=\{{\bf z}\in \widehat{\mathbb{C}}^{q}~|~W{\bf z}={\bf 0}\}$. 
\end{Thm}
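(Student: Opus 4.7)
The plan is to diagonalize $A$ via Theorem~\ref{HUDec} and then mimic the classical Courant--Fischer argument, taking extra care to produce \emph{appreciable} extremal vectors in the dual setting. Write $A=U\Sigma U^*$ with $U=[\mathbf{u}_1,\ldots,\mathbf{u}_m]$ unitary and $\Sigma=\mathrm{diag}(\lambda_1,\ldots,\lambda_m)$; since $\{\mathbf{u}_1,\ldots,\mathbf{u}_m\}$ is an orthonormal basis of $\widehat{\mathbb{C}}^m$, every $\mathbf{x}\in\widehat{\mathbb{C}}^m$ expands uniquely as $\mathbf{x}=\sum_{i=1}^m\alpha_i\mathbf{u}_i$ with $\alpha_i=\mathbf{u}_i^*\mathbf{x}\in\widehat{\mathbb{C}}$, and
\begin{equation*}
\mathbf{x}^*A\mathbf{x}=\sum_{i=1}^m\lambda_i|\alpha_i|^2,\qquad \|\mathbf{x}\|^2=\sum_{i=1}^m|\alpha_i|^2.
\end{equation*}

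First I would establish the ``$\le$'' half of (\ref{MLambda_k}) by taking $B_0=[\mathbf{u}_1,\ldots,\mathbf{u}_{k-1}]$. Any $\mathbf{x}\in N(B_0^*)$ satisfies $\alpha_i=\mathbf{u}_i^*\mathbf{x}=0$ for $i\le k-1$, so
\begin{equation*}
\mathbf{x}^*A\mathbf{x}=\sum_{i=k}^m\lambda_i|\alpha_i|^2\le \lambda_k\|\mathbf{x}\|^2,
\end{equation*}
because each summand $(\lambda_k-\lambda_i)|\alpha_i|^2$ of the difference lies in $\widehat{\mathbb{R}}_+$ by Proposition~\ref{P6.5}. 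Equality holds at $\mathbf{x}=\mathbf{u}_k$, so the inner maximum equals $\lambda_k$ for $B_0$, establishing the required outer upper bound.

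The harder direction is to show that, for \emph{every} $B\in\widehat{\mathbb{C}}^{m\times(k-1)}$, the inner maximum is at least $\lambda_k$. I would restrict the search to $V_k:=\mathrm{span}\{\mathbf{u}_1,\ldots,\mathbf{u}_k\}$; parametrizing $\mathbf{x}=\sum_{i=1}^k\alpha_i\mathbf{u}_i$, the condition $B^*\mathbf{x}=\mathbf{0}$ becomes $M\boldsymbol{\alpha}=\mathbf{0}$ with $M:=B^*[\mathbf{u}_1,\ldots,\mathbf{u}_k]\in\widehat{\mathbb{C}}^{(k-1)\times k}$. Applying Theorem~\ref{SVD-DQM} to $M$, its rank cannot exceed $k-1<k$, so the right unitary factor of $M$ supplies a column $\boldsymbol{\alpha}\in\widehat{\mathbb{C}}^k$ in $N(M)$ that is \emph{appreciable} (being a column of a unitary matrix, whose standard part is unitary and therefore nonzero). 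The corresponding $\mathbf{x}\in V_k\cap N(B^*)$ is then appreciable, and
\begin{equation*}
\mathbf{x}^*A\mathbf{x}=\sum_{i=1}^k\lambda_i|\alpha_i|^2\ge\lambda_k\|\mathbf{x}\|^2,
\end{equation*}
again by Proposition~\ref{P6.5} applied to the nonnegative summands $(\lambda_i-\lambda_k)|\alpha_i|^2$ for $i\le k$. Combining both directions yields (\ref{MLambda_k}), and the max-min identity (\ref{MLambda_{n-k}}) follows by applying (\ref{MLambda_k}) to $-A$, whose ordered eigenvalues are $-\lambda_m\ge\cdots\ge-\lambda_1$ and whose orthonormal eigenbasis is the same $U$ read in reverse.

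The main obstacle is the appreciability requirement in the ``$\ge$'' direction: a naive dimension count in the dual module $\widehat{\mathbb{C}}^k$ is insufficient, since a candidate element of $V_k\cap N(B^*)$ might a priori consist only of infinitesimal vectors, which are excluded from the feasible set by the $\setminus[\mathbf{0}]$ constraint and on which the Rayleigh quotient $\|\mathbf{x}\|^{-2}$ is not even defined. The appreciable-rank refinement built into Theorem~\ref{SVD-DQM}, which guarantees an appreciable null-space generator whenever the row count is strictly smaller than the column count, is precisely what rules this pathology out and constitutes the crux of the argument.
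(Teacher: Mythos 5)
This theorem is stated in the paper with a citation to \cite{LQY22}; the paper does not supply its own proof, so there is nothing to compare against directly. That said, your argument is a correct adaptation of the classical Courant--Fischer min-max proof to the dual complex setting. The two halves are sound: the upper bound at $B_0=[\mathbf{u}_1,\ldots,\mathbf{u}_{k-1}]$ follows from the orthonormal expansion and Proposition~\ref{P6.5}, and the lower bound for arbitrary $B$ correctly produces an \emph{appreciable} witness $\mathbf{x}\in V_k\cap N(B^*)$ by applying Theorem~\ref{SVD-DQM} to $M=B^*[\mathbf{u}_1,\ldots,\mathbf{u}_k]\in\widehat{\mathbb{C}}^{(k-1)\times k}$ (the last column $\mathbf{v}_k$ of the right unitary factor satisfies $M\mathbf{v}_k=\mathbf{0}$ exactly, since only the first $s\le k-1$ columns of the middle block are nonzero, and $\mathbf{v}_k$ is appreciable as a column of a unitary matrix). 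You are also right that this appreciability is the crux that a naive dimension count would miss: the feasible set excludes $[\mathbf{0}]$ and the Rayleigh quotient needs $\|\mathbf{x}\|$ invertible. The passage from $\mathbf{x}^*A\mathbf{x}\ge\lambda_k\|\mathbf{x}\|^2$ to the quotient inequality uses that $\|\mathbf{x}\|^{-2}\in\widehat{\mathbb{R}}_{++}$ and Proposition~\ref{P6.5}, which is fine, and the max-min dual follows cleanly by applying the min-max identity to $-A$ with a shifted index. One small point worth making explicit: $\mathbf{x}^*A\mathbf{x}$ lands in $\widehat{\mathbb{R}}$ (not merely $\widehat{\mathbb{C}}$) because $A$ is Hermitian, so the total order on $\widehat{\mathbb{R}}$ applies throughout.
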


From Theorem \ref{Lemma3}, we can obtain the following theorem, which is a dual complex matrix version of the well-known Sturm Theorem for complex matrices, and can be proven using a method similar to that used in complex matrix theory.

\begin{Thm}\label{SturmThm}
Let $A\in \widehat{\mathbb{C}}^{m\times m}$ be Hermitian. Then for any $k$-th order principal submatrix $A_k$ of $A$, it holds that
$\lambda_{m-k+i}(A)\leq\lambda_i(A_k)\leq\lambda_i(A)$ for every $i=1,2,\ldots,k$.
\end{Thm}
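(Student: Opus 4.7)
The plan is to mimic the classical Cauchy interlacing argument using the minimax characterization in Theorem \ref{Lemma3}. Let $S=\{s_1<s_2<\cdots<s_k\}$ be the index set defining $A_k$, let $P\in \widehat{\mathbb{C}}^{m\times k}$ be the (real) embedding whose $l$-th column is the $s_l$-th standard basis vector of $\widehat{\mathbb{C}}^m$, and let $P_{S^c}\in \widehat{\mathbb{C}}^{m\times (m-k)}$ denote the analogous embedding for the complementary indices. Then $A_k=P^*AP$, $P^*P=I_k$, and for $\mathbf{y}\in \widehat{\mathbb{C}}^k$ the vector $\mathbf{x}=P\mathbf{y}$ satisfies $\mathbf{x}^*A\mathbf{x}=\mathbf{y}^*A_k\mathbf{y}$, $\|\mathbf{x}\|=\|\mathbf{y}\|$, and $\mathbf{x}\notin[\mathbf{0}]$ iff $\mathbf{y}\notin[\mathbf{0}]$. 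Also, $\mathbf{y}\in N(B_k^*)$ with $B_k:=P^*B$ implies $P\mathbf{y}\in N(B^*)$, since $B^*(P\mathbf{y})=B_k^*\mathbf{y}=\mathbf{0}$.

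For the upper bound $\lambda_i(A_k)\leq \lambda_i(A)$ (when $i\geq 2$), I apply \eqref{MLambda_k} to $A$ at the minimizing choice $B=[\mathbf{u}_1,\ldots,\mathbf{u}_{i-1}]$ identified in Theorem \ref{Lemma3}, and test \eqref{MLambda_k} for $A_k$ at the candidate $B_k=P^*B$. For any appreciable $\mathbf{y}\in N(B_k^*)$, the image $P\mathbf{y}$ is appreciable and lies in $N(B^*)$ with identical Rayleigh quotient, so $\|\mathbf{y}\|^{-2}(\mathbf{y}^*A_k\mathbf{y})\leq \lambda_i(A)$; taking the max over such $\mathbf{y}$ and invoking the minimax inequality for $A_k$ yields $\lambda_i(A_k)\leq \lambda_i(A)$. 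The case $i=1$ is the direct Rayleigh-quotient inequality, again obtained by restricting $\mathbf{x}$ to lie in the image of $P$.

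For the lower bound $\lambda_{m-k+i}(A)\leq \lambda_i(A_k)$, I use \eqref{MLambda_k} for $A$ at index $m-k+i$, whose row dimension splits as $(m-k)+(i-1)$. Let $\mathbf{v}_1,\ldots,\mathbf{v}_{i-1}\in\widehat{\mathbb{C}}^k$ be the top $i-1$ orthonormal eigenvectors of $A_k$ furnished by Theorem \ref{HUDec}, and set
$$B:=\big[P\mathbf{v}_1,\ldots,P\mathbf{v}_{i-1},\,P_{S^c}\big]\in \widehat{\mathbb{C}}^{m\times(m-k+i-1)}.$$
The $P_{S^c}$ columns enforce $\mathbf{x}_{S^c}=\mathbf{0}$, i.e.\ $\mathbf{x}=P\mathbf{y}$ for some $\mathbf{y}\in\widehat{\mathbb{C}}^k$, while the first block then enforces $\mathbf{y}\perp\mathbf{v}_1,\ldots,\mathbf{v}_{i-1}$. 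Thus $N(B^*)$ coincides with the image under $P$ of $N([\mathbf{v}_1,\ldots,\mathbf{v}_{i-1}]^*)\subseteq \widehat{\mathbb{C}}^k$, and the Rayleigh-quotient invariance gives
$$\max_{\mathbf{x}\in N(B^*)\setminus[\mathbf{0}]}\|\mathbf{x}\|^{-2}(\mathbf{x}^*A\mathbf{x})=\max_{\mathbf{y}\in N([\mathbf{v}_1,\ldots,\mathbf{v}_{i-1}]^*)\setminus[\mathbf{0}]}\|\mathbf{y}\|^{-2}(\mathbf{y}^*A_k\mathbf{y})=\lambda_i(A_k),$$
where the last equality is \eqref{MLambda_k} for $A_k$ at its own minimizer. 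Hence $\lambda_{m-k+i}(A)\leq \lambda_i(A_k)$. The corner cases $i=1$ (take $B=P_{S^c}$ alone) and $k=m$ (equalities throughout) require only minor bookkeeping.

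The main obstacle is establishing the identity $N(B^*)$ equals the $P$-image of $N([\mathbf{v}_1,\ldots,\mathbf{v}_{i-1}]^*)$ in the dual complex sense: one must verify that the columns of $P_{S^c}$ together with the $P\mathbf{v}_j$'s impose the expected orthogonal decomposition on $\widehat{\mathbb{C}}^m$. This follows from the orthonormality of the $\mathbf{v}_j$'s (which guarantees that they remain appreciable and linearly independent after embedding by $P$) together with the fact that $\mathrm{span}(P)$ and $\mathrm{span}(P_{S^c})$ exhaust $\widehat{\mathbb{C}}^m$ as a direct sum. Once this identification is in place, both inequalities reduce to tracking the Rayleigh quotient through the isometric embedding $P$ and applying Theorem \ref{Lemma3}.
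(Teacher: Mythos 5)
Your proof is correct. The paper does not actually spell out an argument here; it only remarks that the statement ``can be proven using a method similar to that used in complex matrix theory'' based on the minimax principle of Theorem \ref{Lemma3}, and your proposal is precisely that standard Cauchy interlacing argument carried out carefully in the dual setting. Your identifications are sound: with $P$ the $0$--$1$ selection matrix (so $P^*P=I_k$, $A_k=P^*AP$, $\|P\mathbf{y}\|=\|\mathbf{y}\|$, $P\mathbf{y}$ appreciable iff $\mathbf{y}$ is), the computation $B^*(P\mathbf{y})=(P^*B)^*\mathbf{y}$ gives the inclusion $P\cdot N((P^*B)^*)\subseteq N(B^*)$ needed for the upper bound, and for the lower bound the choice $B=[P\mathbf{v}_1,\ldots,P\mathbf{v}_{i-1},P_{S^c}]$ of dimension $m\times(m-k+i-1)$ forces $N(B^*)$ to be exactly the $P$-image of the orthocomplement of the $\mathbf{v}_j$'s, so that the inner maximum collapses to $\lambda_i(A_k)$ by the attainment clause of Theorem \ref{Lemma3}. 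One small stylistic point: a more symmetric route would use the max-min characterization \eqref{MLambda_{n-k}} for the lower bound rather than building the enlarged $B$, but your single-formula approach is equally valid and, if anything, tidier because it only relies on \eqref{MLambda_k}. Your handling of the edge cases $i=1$ and $k=m$ is the right bookkeeping.
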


\begin{Thm}(Bloomfield-Watson inequality)
Let $A\in \widehat{\mathbb{C}}^{m\times m}$ be positive semidefinite, and $k<m$. Then for any $X\in \widehat{\mathbb{C}}^{m\times k}$ satisfying $X^* X=I_k$, it holds that
\begin{equation}\label{XAX-Eig-i}
\prod_{i=1}^k\lambda_{m-k+i}(A)\leq {\rm det}[X^* AX]\leq\prod_{i=1}^k\lambda_{i}(A).
\end{equation}
\end{Thm}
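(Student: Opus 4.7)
The plan is to reduce the Bloomfield-Watson inequality to Sturm's theorem (Theorem \ref{SturmThm}) applied to a suitable principal submatrix. First I would use Proposition \ref{Prop3.4} to extend $X$ to a full unitary matrix $W = (X, Y) \in \widehat{\mathbb{C}}^{m \times m}$. Setting $B = W^* A W$, the matrix $B$ is Hermitian, positive semidefinite by Proposition \ref{Positive-SemiDef}(a), and unitarily similar to $A$. Hence $B$ has the same eigenvalues $\lambda_1(A) \geq \cdots \geq \lambda_m(A) \geq 0$ as $A$ (via Theorem \ref{HUDec}, since both $A$ and $B$ are unitarily diagonalizable to a common diagonal matrix with the same spectrum). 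The structural observation driving the argument is that the top-left $k \times k$ principal submatrix of $B$ is precisely $X^* A X$.

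Next I would apply Sturm's theorem (Theorem \ref{SturmThm}) to $B$ with this $k$-th order principal submatrix $X^* A X$, obtaining
$$
\lambda_{m-k+i}(A) \leq \lambda_i(X^* A X) \leq \lambda_i(A), \quad i = 1, \ldots, k.
$$
All quantities are nonnegative dual numbers because $A$ and $X^* A X$ are Hermitian positive semidefinite (Proposition \ref{Positive-SemiDef}(b)). Proposition \ref{P6.5} guarantees that products of nonnegative dual numbers are nonnegative, and a routine induction then shows that inequalities $0 \leq a \leq b$ and $0 \leq c \leq d$ between nonnegative dual numbers may be multiplied to give $ac \leq bd$. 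Multiplying the displayed inequalities over $i = 1, \ldots, k$ therefore yields
$$
\prod_{i=1}^k \lambda_{m-k+i}(A) \leq \prod_{i=1}^k \lambda_i(X^* A X) \leq \prod_{i=1}^k \lambda_i(A).
$$

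To finish, I would invoke the identity ${\rm det}[H] = \prod_{i=1}^k \lambda_i(H)$ for any $k \times k$ Hermitian dual complex matrix $H$, which was established in the discussion just before Example \ref{Ex-1} by combining Theorem \ref{HUDec} with Proposition \ref{AB-Product}. Taking $H = X^* A X$ converts the middle expression above into ${\rm det}[X^* A X]$, giving the desired conclusion (\ref{XAX-Eig-i}). The main (and rather mild) obstacle is tracking the dual-number inequalities through a product of $k$ factors; this is handled uniformly by Proposition \ref{P6.5}, so no serious new technique beyond what the paper has already developed is required.
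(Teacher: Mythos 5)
Your proposal is correct and follows essentially the same route as the paper: extend $X$ to a unitary matrix via Proposition \ref{Prop3.4}, conjugate $A$ to expose $X^*AX$ as a leading principal submatrix, apply the Sturm interlacing theorem (Theorem \ref{SturmThm}), and multiply the resulting nonnegative dual-number inequalities using Proposition \ref{P6.5} together with the identity ${\rm det}[H]=\prod_i\lambda_i(H)$ for Hermitian $H$. The only cosmetic difference is that you justify preservation of eigenvalues under unitary conjugation via Theorem \ref{HUDec} while the paper cites Proposition \ref{A-similar-B}; the substance is identical.
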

\begin{proof}
It is obvious that $\lambda_i(A)\geq 0$ for any $i=1,2,\ldots,m$. Moreover, by (a) in Proposition \ref{Positive-SemiDef}, we know that $X^* AX$ is positive semidefinite. Since ${\rm det}[X^* AX]=\prod_{i=1}^k\lambda_i(X^* AX)$ and $\lambda_i(X^* AX)\geq 0$, to prove (\ref{XAX-Eig-i}), we only need to prove $\lambda_{m-k+i}(A)\leq\lambda_i(X^* AX)\leq\lambda_{i}(A)$ for every $i=1,2,\ldots,k$. By Proposition \ref{Prop3.4}, there exists a $V \in {\mathbb {Q}}^{m \times (m-k)}$ such that $Y:=(X, V) \in {\mathbb{Q}}^{m \times m}$ is unitary. Denote
$$
\tilde{A}=Y^* AY=\left(\begin{array}{cc}
X^* AX&X^* AV\\
V^* AX&V^* AV
\end{array}\right).
$$
Then by Proposition \ref{A-similar-B}, we have $\lambda_{i}(\tilde{A})=\lambda_{i}(A)$ for $i=1,2,\ldots,m$. Notice that $X^* AX$ is the $k$-th order principal submatrix of $\tilde{A}$. By Theorem \ref{SturmThm}, we know
$$
\lambda_{m-k+i}(A)=\lambda_{m-k+i}(\tilde{A})\leq\lambda_i(X^* AX)\leq\lambda_{i}(\tilde{A})=\lambda_{i}(A)
$$
for every $i=1,2,\ldots,k$. Consequently, by Proposition \ref{P6.5}, we obtain the desired result and complete the proof.
\end{proof}

\section{Quasi-determinant of dual quaternion matrices}\label{DET-DQ}
Inspired by the work in \cite{R14}, let us define the map $\omega:\widehat{\mathbb{Q}}\rightarrow \widehat{\mathbb{C}}^{2\times 2}$ by 
$$
\omega(a)=\left(
\begin{array}{cc}
a_0+a_1{\bf i}&a_2+a_3{\bf i}\\
-a_2+a_3{\bf i}&a_0-a_1{\bf i}
\end{array}
\right),~~~a=a_0+a_1{\bf i}+a_2{\bf j}+a_3{\bf k}\in \widehat{\mathbb{Q}}~{\rm with}~ a_0, a_1, a_2, a_3\in \widehat{\mathbb{R}}. 
$$
Denote by $\mathbb{Z}$ the all $2\times 2$ dual complex matrices of the form as follows
$$
\mathbb{Z}=\left\{\left(\begin{array}{cc}u&v\\
-\bar v&\bar u
\end{array}\right)\in \widehat{\mathbb{C}}^{2\times 2}~|~u,v\in \widehat{\mathbb{C}}\right\}.$$
It is easy to verify that $Z_1+Z_2\in \mathbb{Z}$ and $Z_1Z_2\in \mathbb{Z}$ for any $Z_1,Z_2\in \mathbb{Z}$. Furthermore, from the definition of the map $\omega$, it is easy to verify that $\omega$ is one-to-one and onto map on the set $\mathbb{Z}$.

\begin{Prop}\label{UnitIsomMap} The map $\omega$ is a unital isomorphism of $\widehat{\mathbb{Q}}$ onto the set $\mathbb{Z}$.
\end{Prop}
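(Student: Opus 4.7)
\medskip

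\noindent\textbf{Proof proposal.} The plan is to verify four things: the unital identity $\omega(1)=I_2$, additivity, multiplicativity, and the bijection $\widehat{\mathbb{Q}}\leftrightarrow\mathbb{Z}$. The bijection is already recorded in the paragraph immediately preceding the statement: the four real components $a_0,a_1,a_2,a_3\in\widehat{\mathbb{R}}$ of a dual quaternion can be read off uniquely from the four entries of $\omega(a)$, and any matrix $\bigl(\begin{smallmatrix} u & v \\ -\bar v & \bar u\end{smallmatrix}\bigr)\in\mathbb{Z}$ arises this way. So the work reduces to three algebraic identities.

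The first two are immediate. Taking $a_0=1,\ a_1=a_2=a_3=0$ gives $\omega(1)=I_2$. For additivity, each entry of $\omega(a)$ is $\widehat{\mathbb{R}}$-linear in $(a_0,a_1,a_2,a_3)$, and addition in $\widehat{\mathbb{Q}}$ is componentwise, so $\omega(a+b)=\omega(a)+\omega(b)$ entrywise.

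The substantive step is multiplicativity $\omega(ab)=\omega(a)\omega(b)$. I would use the complex-pair decomposition: writing $u=a_0+a_1\ii$ and $v=a_2+a_3\ii$, we have $a=u+v\jj$ with $u,v\in\widehat{\mathbb{C}}$, and likewise $b=u'+v'\jj$. The quaternionic relations $\jj\ii=-\ii\jj$ and $\jj^2=-1$, together with the centrality of $\epsilon$, give the identity $\jj w=\bar w\,\jj$ for every $w\in\widehat{\mathbb{C}}$, and a short direct calculation yields
\begin{equation*}
ab \;=\; \bigl(uu'-v\,\overline{v'}\bigr) \;+\; \bigl(uv'+v\,\overline{u'}\bigr)\jj,
\end{equation*}
where conjugation is with respect to $\ii$. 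On the matrix side, a direct expansion of
\begin{equation*}
\begin{pmatrix} u & v \\ -\bar v & \bar u \end{pmatrix}\begin{pmatrix} u' & v' \\ -\overline{v'} & \overline{u'} \end{pmatrix}
\;=\;
\begin{pmatrix} uu'-v\,\overline{v'} & uv'+v\,\overline{u'} \\ -\bar v\,u'-\bar u\,\overline{v'} & -\bar v\,v'+\bar u\,\overline{u'} \end{pmatrix}
\end{equation*}
produces the top row $(uu'-v\,\overline{v'},\ uv'+v\,\overline{u'})$, which is exactly the complex-pair decomposition of $ab$; the bottom row then agrees automatically with $-\overline{uv'+v\,\overline{u'}}$ and $\overline{uu'-v\,\overline{v'}}$ by a routine conjugation check, confirming that the product lies in $\mathbb{Z}$ and equals $\omega(ab)$.

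The only point requiring care is tracking the non-commutativity $\jj w=\bar w\,\jj$, which is exactly what forces the conjugates into the product formula and into the $(2,1),(2,2)$ entries of the matrix product. Since $\epsilon$ is a central nilpotent and the dual-part arithmetic sits on top of the ordinary quaternion relations, I expect no genuine obstacle: the classical computation proving that the quaternion-to-$2\times 2$-complex-matrix embedding is a unital ring isomorphism goes through verbatim after base change from $\mathbb{R}$ to $\widehat{\mathbb{R}}$.
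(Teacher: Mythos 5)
Your proof is correct and takes essentially the same approach as the paper, which simply asserts $\omega(1)=I_2$, additivity, and multiplicativity and leaves the latter to ``a direct computation.'' Your use of the complex-pair decomposition $a=u+v\jj$ and the relation $\jj w=\bar w\,\jj$ is just a clean way of organizing that direct computation; the only item the paper also records that you omit is the (unrequired for the statement) conjugation-compatibility $\omega(\bar a)=\omega(a)^*$.
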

\begin{proof}
It is easy to see that $\omega(1) = I_2$, $\omega(\bar a)=\omega(a)^*$, and $\omega(a+b)=\omega(a)+\omega(b)$ for all $a,b\in \widehat{\mathbb{Q}}$. Moreover, by a direct computation, we know that $\omega(ab)=\omega(a)\omega(b)$ for all $a,b\in \widehat{\mathbb{Q}}$. We obtain the desired conclusion.
\end{proof}

\begin{Prop}
Let $a\in \widehat{\mathbb{Q}}$. Then $a$ is invertible if and only if $\omega(a)$ is invertible, and $(\omega(a))^{-1}=\omega(a^{-1})\in \mathbb{Z}$.
\end{Prop}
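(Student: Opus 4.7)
The plan is to exploit the fact that $\omega$ is a unital ring isomorphism, which was just established in Proposition \ref{UnitIsomMap}, so that invertibility transfers cleanly in one direction, while the other direction reduces to a determinant computation that amounts to the quaternion norm.

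First I would handle the easy direction: if $a \in \widehat{\mathbb{Q}}$ is invertible, then $aa^{-1} = a^{-1}a = 1$, and applying $\omega$ together with the multiplicativity and unitality from Proposition \ref{UnitIsomMap} gives $\omega(a)\omega(a^{-1}) = \omega(a^{-1})\omega(a) = \omega(1) = I_2$. Since $\omega$ maps $\widehat{\mathbb{Q}}$ into $\mathbb{Z}$ by construction, this immediately yields the explicit formula $(\omega(a))^{-1} = \omega(a^{-1}) \in \mathbb{Z}$, which is the second half of the statement.

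For the converse direction I would proceed via the determinant of dual complex matrices developed in Section \ref{Deter-DNMat}. Writing $a = a_0 + a_1\ii + a_2\jj + a_3\kk$ with $a_0,a_1,a_2,a_3 \in \widehat{\mathbb R}$, a direct expansion gives
\[
{\rm det}[\omega(a)] = (a_0+a_1\ii)(a_0-a_1\ii) + (a_2+a_3\ii)(a_2-a_3\ii) = a_0^2 + a_1^2 + a_2^2 + a_3^2,
\]
which lies in $\widehat{\mathbb R}$ (here $\ii$ on the right-hand side of $\omega$ is read as the complex imaginary unit, consistent with the standard embedding). By the remark following Proposition \ref{A-det-Prop-1}, $\omega(a)$ is invertible if and only if ${\rm det}[\omega(a)]$ is appreciable, i.e.\ if and only if $(a_{0\,\sf st})^2+(a_{1\,\sf st})^2+(a_{2\,\sf st})^2+(a_{3\,\sf st})^2 \neq 0$. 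This is precisely the condition that the standard part $a_{\rm st}$ of $a$ is nonzero, which by the discussion of dual quaternion inversion in Section \ref{Prelim} is equivalent to $a$ being appreciable and hence invertible in $\widehat{\mathbb{Q}}$. Combined with the first direction this closes the equivalence.

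I do not anticipate a serious obstacle: the content is a transfer of invertibility along a unital isomorphism plus a short determinant computation. The only minor point to be careful about is the notational overlap between the quaternion unit $\ii$ and the complex unit appearing inside the entries of $\omega(a)$; once that identification is made explicit, the determinant collapses to the dual-real quantity $|a|^2$ and the rest follows from the appreciability criterion for invertibility of dual objects that has already been established.
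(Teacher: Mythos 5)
Your proposal is correct and follows essentially the same route as the paper: the forward direction applies $\omega$ to $aa^{-1}=a^{-1}a=1$ and uses the unital multiplicativity of Proposition~\ref{UnitIsomMap}, while the converse reduces invertibility of $\omega(a)$ to the appreciability of its determinant and identifies that determinant with $a_0^2+a_1^2+a_2^2+a_3^2$, i.e.\ with $|a_{\sf st}|^2\neq 0$ at the standard level. The only cosmetic difference is that you carry out the $2\times 2$ determinant over $\widehat{\mathbb{C}}$ and then pass to the standard part, whereas the paper computes $\det\bigl(\omega(a_{\sf st})\bigr)=|a_{\sf st}|^2$ directly on the standard part; both are the same calculation.
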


\begin{proof}
Let $a=a_{\sf st}+a_{\sf in}\epsilon$. It is obvious that $\omega(a)=\omega(a_{\sf st})+\omega(a_{\sf in})\epsilon$, and $\omega(a)$ is invertible if and only if $\omega(a_{\sf st})$ is invertible,  which is equivalent to ${\rm det}(\omega(a_{\sf st}))\neq 0$. On the other hand, $a$ is invertible if and only if $a_{\sf st}\neq 0$, which is equivalent to $|a_{\sf st}|^2\neq 0$. Since ${\rm det}(\omega(a_{\sf st}))=|a_{\sf st}|^2$, we know that $a$ is invertible if and only if $\omega(a)$ is invertible. If $a$ is invertible, then from $aa^{-1}=1$, we know $\omega(a)\omega(a^{-1})=I_2$, which means that $(\omega(a))^{-1}=\omega(a^{-1})\in \mathbb{Z}$.
\end{proof}

\begin{Prop}\label{FFFIn}
For any $a\in \widehat{\mathbb{Q}}$, it holds that $1/2\|\omega(a)\|_F\leq|a|\leq \sqrt{2}\|\omega(a)\|_F$.
\end{Prop}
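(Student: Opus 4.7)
The plan is to establish the sharper equality $\|\omega(a)\|_F = \sqrt{2}\,|a|$, from which the claimed two-sided bound is immediate, since $\tfrac12 \le \tfrac{1}{\sqrt2} \le \sqrt2$. I would split into the two cases dictated by the piecewise definitions of $|\cdot|$ in (\ref{e7}) and $\|\cdot\|_F$ in (\ref{FNorm-DQM}): $a$ appreciable, and $a$ infinitesimal.

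In the appreciable case, my preferred route exploits Proposition \ref{UnitIsomMap}. Since $\omega$ is a unital isomorphism sending $\bar a$ to $\omega(a)^*$ and sending every dual real $r$ to $rI_2$, one has $\omega(a)\omega(a)^* = \omega(a\bar a) = (a\bar a)\,I_2$, where the last equality uses $a\bar a\in\widehat{\mathbb R}$. Summing the two diagonal entries of this matrix identity gives $\sum_{i,j}|[\omega(a)]_{ij}|^2 = 2\,a\bar a$. Since $\omega(a)$ is appreciable as well, the left-hand side equals $\|\omega(a)\|_F^{\,2}$ by (\ref{FNorm-DQM}); and a one-line unfolding of (\ref{e7}) shows $a\bar a = |a|^2$. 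Taking the positive dual-number square root of $\|\omega(a)\|_F^{\,2} = 2|a|^2$ then yields $\|\omega(a)\|_F = \sqrt{2}\,|a|$. A longhand alternative, avoiding Proposition \ref{UnitIsomMap}, would be to write $a = a_0 + a_1\ii + a_2\jj + a_3\kk$ with $a_i\in\widehat{\mathbb R}$ and compute each of the four squared entry magnitudes of $\omega(a)$ directly, obtaining $2(a_0^2+a_1^2+a_2^2+a_3^2)$ on the nose.

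For infinitesimal $a$, write $a = a_{\sf in}\epsilon$ with $a_{\sf in}\in\mathbb Q$. Then $\omega(a) = \omega(a_{\sf in})\,\epsilon$, so (\ref{e7}) and (\ref{FNorm-DQM}) give $|a| = |a_{\sf in}|\,\epsilon$ and $\|\omega(a)\|_F = \|\omega(a_{\sf in})\|_F\,\epsilon$. The classical identity $\|\omega(a_{\sf in})\|_F = \sqrt{2}\,|a_{\sf in}|$ for an ordinary quaternion is simply the same trace computation over $\mathbb C$, and multiplying through by $\epsilon$ transports the equality to $a$. I expect the only real obstacle to be the bookkeeping across the appreciable/infinitesimal branches of the piecewise definitions; once $\|\omega(a)\|_F = \sqrt{2}\,|a|$ is in hand, the stated inequality follows immediately from $\tfrac12\le\tfrac{1}{\sqrt2}\le\sqrt2$.
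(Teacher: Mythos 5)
Your proof is correct and takes a genuinely cleaner route than the paper's. The paper proceeds by brute force: it splits into appreciable and infinitesimal cases, writes out the dual-number expansion of $\|\omega(a)\|_F$ and of $|a|$ from the piecewise definitions (\ref{e7}) and (\ref{FNorm-DQM}), and then compares standard parts to read off $\tfrac12\|\omega(a)\|_F\le|a|\le\|\omega(a)\|_F$. (As it happens, the paper's displayed formula for $\|\omega(a)\|_F$ in the appreciable case carries a spurious factor of $2$ in the infinitesimal part; the final inequality is unaffected because the total order on $\widehat{\mathbb R}$ is decided by the standard parts alone here, but it is a computational slip.) You instead derive the exact algebraic identity $\|\omega(a)\|_F=\sqrt2\,|a|$ by passing through the $*$-homomorphism property of $\omega$: $\omega(a)\omega(a)^*=\omega(a\bar a)=(a\bar a)I_2$, taking the trace, and identifying $a\bar a=|a|^2$. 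This is structurally preferable on several counts: it is case-uniform (the same calculation handles appreciable and infinitesimal $a$, since $z\bar z=|z|^2$ holds in both branches of (\ref{e7})), it sidesteps the dual-number square-root bookkeeping that tripped up the paper, and it exposes the sharp constant $1/\sqrt2$ from which both sides of the stated inequality follow trivially via $\tfrac12\le\tfrac{1}{\sqrt2}\le\sqrt2$ and Proposition \ref{P6.5}. Your longhand fallback — expanding the four entry magnitudes of $\omega(a)$ to get $2(a_0^2+a_1^2+a_2^2+a_3^2)$ — is also sound and is essentially what the paper attempted.
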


\begin{proof}
Let $a=a_{\sf st}+a_{\sf in}\epsilon$, it is easy to see that $\omega(a)=\omega(a_{\sf st})+\omega(a_{\sf in})\epsilon$. If $a$ is  appreciable, i.e., $a_{\sf st}\neq 0$, then $\omega(a_{\sf st})$ is invertible. By (\ref{FNorm-DQM}) and a direct calculation, we have
$$
\|\omega(a)\|_F=\sqrt{2}|a_{\sf st}|+\frac{2(a_{\sf st}\bar a_{\sf in}+a_{\sf in} \bar a_{\sf st})}{\sqrt{2}|a_{\sf st}|}\epsilon.
$$
On the other hand, by (\ref{e7}), we have
$$
|a|=|a_{\sf st}|+\frac{(a_{\sf st}\bar a_{\sf in}+a_{\sf in} \bar a_{\sf st})}{2|a_{\sf st}|}\epsilon.
$$
From these, we have $1/2\|\omega(a)\|_F\leq |a|\leq \|\omega(a)\|_F$.

If $a$ is infinitesimal, i.e., $a_{\sf st}=0$, then $a=a_{\sf in}\epsilon$, and $|a|=|a_{\sf in}|\epsilon$ by (\ref{e7}). On the other hand, since $\omega(a)=\omega(a_{\sf in})\epsilon$, it holds that $\|\omega(a)\|_F=\|\omega(a_{\sf in})\|_F\epsilon$ by (\ref{FNorm-DQM}). It is easy to see that $\|\omega(a_{\sf in})\|_F=\sqrt{2}|a_{\sf in}|$. Consequently, it holds that $1/2\|\omega(a)\|_F\leq |a|\leq \|\omega(a)\|_F$.
We complete the proof.
\end{proof}

Based upon the map $\omega: \widehat{\mathbb{Q}}\rightarrow \widehat{\mathbb{C}}^{2\times 2}$, define the map $\tilde{\omega}:\widehat{\mathbb{Q}}^{m\times n}\rightarrow\widehat{\mathbb{C}}^{2m\times 2n}$ as follows
$$
\tilde{\omega}(A)=(\omega(a_{ij}))_{i,j=1}^{m,n},
$$
where $a_{ij}\in \widehat{\mathbb{Q}}$ is the $(i,j)$th element of $A$ for $i=1,2,\ldots,m,j=1,2,\ldots,n$. From this  definition, we know $\tilde{\omega}(A)=\tilde{\omega}(A_{\sf st})+\tilde{\omega}(A_{\sf in})\epsilon$ when $A=A_{\sf st}+A_{\sf in}\epsilon$ with $A_{\sf st}, A_{\sf in}\in \mathbb{Q}^{m\times n}$.

\begin{Prop}\label{Omg-prerty} We have

(a) $\tilde{\omega}(\alpha A+\beta B)=\alpha\tilde{\omega}(A)+\beta\tilde{\omega}(B)$ for all $A,B\in \widehat{\mathbb{Q}}^{m\times n}$ and $\alpha,\beta\in \widehat{\mathbb{R}}$.

(b) $\tilde{\omega}(A^*)=(\tilde{\omega}(A))^*$ for all $A\in \widehat{\mathbb{Q}}^{m\times n}$.

(c) $\tilde{\omega}(AB)=\tilde{\omega}(A)\tilde{\omega}(B)$ for all $A\in \widehat{\mathbb{Q}}^{m\times p}$ and $B\in \widehat{\mathbb{Q}}^{p\times n}$.

(d) If $A\in \widehat{\mathbb{Q}}^{m\times m}$ is invertible, then $\tilde{\omega}(A)$ is invertible, and $(\tilde{\omega}(A))^{-1}=\tilde{\omega}(A^{-1})$.
\end{Prop}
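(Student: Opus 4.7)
The plan is to derive all four properties by lifting Proposition \ref{UnitIsomMap} from scalars to blocks. Since $\tilde{\omega}(A)$ is defined block-entrywise by $\omega$, each identity at the matrix level will reduce to the corresponding identity at the scalar level, which has already been established.

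First I would handle (a) and (b) together as the easy warm-up. For (a), the $(i,j)$-block of $\tilde{\omega}(\alpha A+\beta B)$ is $\omega(\alpha a_{ij}+\beta b_{ij})$; the linearity of $\omega$ (immediate from its definition as a matrix of $\widehat{\mathbb{R}}$-linear combinations of $a_0,a_1,a_2,a_3$) then identifies this with $\alpha\,\omega(a_{ij})+\beta\,\omega(b_{ij})$, which is the $(i,j)$-block of $\alpha\tilde{\omega}(A)+\beta\tilde{\omega}(B)$. For (b), the $(i,j)$-block of $\tilde{\omega}(A^*)$ is $\omega(\bar a_{ji})$, and by Proposition \ref{UnitIsomMap} this equals $\omega(a_{ji})^*$; on the other hand, $(\tilde{\omega}(A))^*$, viewed as a matrix of $2\times 2$ blocks, has its $(i,j)$-block obtained by transposing the block layout and conjugate-transposing each block, giving exactly $\omega(a_{ji})^*$.

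For (c), which is the substantive step, I write $C=AB$ with $c_{ij}=\sum_{k=1}^p a_{ik}b_{kj}$. Then
\[
\tilde{\omega}(AB)_{ij} \;=\; \omega(c_{ij}) \;=\; \omega\!\left(\sum_{k=1}^p a_{ik}b_{kj}\right) \;=\; \sum_{k=1}^p \omega(a_{ik})\,\omega(b_{kj}),
\]
where the last step uses the additivity and the multiplicativity $\omega(ab)=\omega(a)\omega(b)$ from Proposition \ref{UnitIsomMap}. The right-hand side is exactly the $(i,j)$ block-entry of the block-matrix product $\tilde{\omega}(A)\tilde{\omega}(B)$, so (c) follows. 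The only care needed is that the block-product convention used for $\tilde{\omega}$ matches ordinary scalar matrix multiplication on $\widehat{\mathbb{C}}^{2m\times 2n}$, which is true because the $2\times 2$ blocks are placed in the natural row-column positions.

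Finally, (d) is an immediate consequence of (a) and (c). From $AA^{-1}=A^{-1}A=I_m$, applying $\tilde{\omega}$ and using (c) gives $\tilde{\omega}(A)\tilde{\omega}(A^{-1})=\tilde{\omega}(A^{-1})\tilde{\omega}(A)=\tilde{\omega}(I_m)$; since $\omega(1)=I_2$, we have $\tilde{\omega}(I_m)=I_{2m}$, so $\tilde{\omega}(A)$ is invertible with $(\tilde{\omega}(A))^{-1}=\tilde{\omega}(A^{-1})$. I do not anticipate any real obstacle here: the entire proof is a bookkeeping lift of the scalar isomorphism $\omega$ to block form, and the only subtlety is checking in (b) that ``conjugate transpose of a block matrix'' coincides with ``transpose of the block layout composed with conjugate transpose of each block'', which is standard.
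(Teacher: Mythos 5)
Your proof is correct and follows essentially the same route as the paper: reduce each block-entry to the corresponding scalar identity for $\omega$ from Proposition \ref{UnitIsomMap}, use $\omega(c_{ij})=\sum_k\omega(a_{ik})\omega(b_{kj})$ together with block-matrix multiplication for (c), and apply $\tilde{\omega}$ to $AA^{-1}=I_m$ for (d). You spell out (a) and (b) in slightly more detail than the paper, which simply cites the definition of $\tilde{\omega}$, but the substance is the same.
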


\begin{proof}
The first two conclusions can be proved by the definition of $\tilde{\omega}$. Now we prove the conclusion (c). Let $A=(a_{ik})$, $B=(b_{kj})$. Denote $C=(c_{ij})=AB$, which means that $c_{ij}=\sum_{k=1}^pa_{ik}b_{kj}$ for any $i=1,2,\ldots, m$ and $j=1,2,\ldots,n$. Consequently, we know that $\omega(c_{ij})=\sum_{k=1}^p\omega(a_{ik})\omega(b_{kj})$, which implies, together with the multiplication rules for blocked matrices, that the conclusion (c) holds. If $A\in \widehat{\mathbb{Q}}^{m\times m}$ is invertible, i.e, there exists $A^{-1}\in \widehat{\mathbb{Q}}^{m\times m}$ such that $AA^{-1}=I_m$, then by (c), we have $\tilde{\omega}(A)\tilde{\omega}(A^{-1})=\tilde{\omega}(I_m)=I_{2m}$, which means that the conclusion (d) holds. We complete the proof. 
\end{proof}

From Proposition \ref{Omg-prerty},  Theorem 4.1 in \cite{QL21} and the definition of $\tilde{\omega}(\cdot)$, we immediately have
\begin{Prop}
A matrix $A\in \widehat{\mathbb{Q}}^{m\times m}$ is hermitian, skewhermitian, unitary, normal, positive definite, or positive
semidefinite,  if and only if $\tilde{\omega}(A)$ is such.
\end{Prop}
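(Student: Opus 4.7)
The plan is to split the six properties into two groups and handle them separately. The four identity-type properties (Hermitian, skew-Hermitian, unitary, normal) follow directly from Proposition \ref{Omg-prerty} together with injectivity of $\tilde{\omega}$: note first that $\tilde{\omega}$ is injective because the four real dual entries of $a\in\widehat{\mathbb{Q}}$ can be read off the entries of $\omega(a)$, so $\tilde{\omega}(A)=\tilde{\omega}(B)$ forces $A=B$ entrywise. Then Hermitian ($A^*=A$) translates to $\tilde{\omega}(A)^*=\tilde{\omega}(A)$ by (b); skew-Hermitian uses (a) and (b); unitary ($A^*A=I_m$) becomes $\tilde{\omega}(A)^*\tilde{\omega}(A)=I_{2m}$ using (b), (c) and $\tilde{\omega}(I_m)=I_{2m}$; and normal ($A^*A=AA^*$) is analogous. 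The reverse implications pull each identity back via injectivity.

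For positive (semi)definiteness I would route through the eigenvalue decomposition. By Theorem 4.1 in \cite{QL21}, any dual quaternion Hermitian $A$ admits $A=U\Sigma U^*$ with $U$ unitary and $\Sigma=\mathrm{diag}(\lambda_1,\ldots,\lambda_m)$ having real dual entries. Applying $\tilde{\omega}$ and invoking the unitary equivalence already proved, $\tilde{\omega}(A)=\tilde{\omega}(U)\tilde{\omega}(\Sigma)\tilde{\omega}(U)^*$ is exactly the eigenvalue decomposition (Theorem \ref{HUDec}) of the dual complex Hermitian matrix $\tilde{\omega}(A)$, with spectrum $\{\lambda_1,\lambda_1,\ldots,\lambda_m,\lambda_m\}$, since $\omega(\lambda_i)=\lambda_i I_2$ for each $\lambda_i\in\widehat{\mathbb{R}}$. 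By Proposition \ref{Positive-SemiDef}(b), $\tilde{\omega}(A)$ is positive (semi)definite iff every $\lambda_i$ is positive (nonnegative); the analogous eigenvalue characterization on the dual quaternion side (obtained by plugging the decomposition into $\vx^* A\vx=(U^*\vx)^*\Sigma(U^*\vx)$) gives the same condition on $A$, closing the equivalence.

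The main obstacle is the positive definite case, because of a definitional asymmetry: the paper's definition of positive definiteness of $A\in\widehat{\mathbb{Q}}^{m\times m}$ uses only complex test vectors $\vx\in\widehat{\mathbb{C}}^m$, whereas the natural definition on the image side uses all of $\widehat{\mathbb{C}}^{2m}$. The tempting direct transport via the real-linear map $\phi:\widehat{\mathbb{Q}}^m\to\widehat{\mathbb{C}}^{2m}$ defined by $\phi(\vx)=\tilde{\omega}(\vx)e_1$ does satisfy $\phi(\vx)^*\tilde{\omega}(A)\phi(\vx)=\vx^* A\vx$ for Hermitian $A$ and preserves appreciability, but it sends $\widehat{\mathbb{C}}^m$ into only a proper $\widehat{\mathbb{R}}$-subspace of $\widehat{\mathbb{C}}^{2m}$, so the two test-vector classes do not match cleanly. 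The eigenvalue route above sidesteps this mismatch by reducing both conditions to sign conditions on a common spectrum.
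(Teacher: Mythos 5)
Your proof takes essentially the same route the paper does: the paper's own ``proof'' is a one-line citation to Proposition \ref{Omg-prerty}, Theorem 4.1 of \cite{QL21}, and the definition of $\tilde{\omega}$, and your elaboration — using (a)--(c) of Proposition \ref{Omg-prerty} together with injectivity of $\tilde{\omega}$ for the four identity-type properties, and the unitary eigendecomposition $A=U\Sigma U^*$ of the dual quaternion Hermitian matrix to reduce positive (semi)definiteness on both sides to sign conditions on the common spectrum $\{\lambda_1,\lambda_1,\ldots,\lambda_m,\lambda_m\}$ — is exactly what those citations supply.

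One caveat on your last paragraph. You claim the eigenvalue route ``sidesteps'' the $\widehat{\mathbb{C}}^m$-versus-$\widehat{\mathbb{Q}}^m$ mismatch in the paper's definition of positive definiteness, but it does not: the characterization ``$A$ positive definite $\iff$ all $\lambda_i>0$ (and appreciable)'' on the dual quaternion side genuinely requires $\widehat{\mathbb{Q}}^m$ test vectors, because the unitary $U$ in the eigendecomposition is quaternionic. With only $\widehat{\mathbb{C}}^m$ test vectors the equivalence fails: for $H=\left(\begin{smallmatrix}0&\jj\\-\jj&0\end{smallmatrix}\right)$ one checks $\vx^*H\vx=\bar{x}_1\bar{x}_2\jj-\bar{x}_2\bar{x}_1\jj=0$ for every $\vx\in\mathbb{C}^2$, so $A=\tfrac12 I+H$ satisfies $\vx^*A\vx=\tfrac12\|\vx\|^2>0$ for all appreciable complex $\vx$, yet its eigenvalues are $\tfrac32$ and $-\tfrac12$. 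The honest fix is to read the paper's $\widehat{\mathbb{C}}^m$ as a typo for $\widehat{\mathbb{Q}}^m$ — which the surrounding development (including the positive semidefinite clause and the use of $U\in\widehat{\mathbb{Q}}^{m\times m}$) clearly presupposes; with that correction your argument, and the statement itself, closes.
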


\begin{Prop} We have

(a) There exist positive appreciable constants $\tau,\kappa\in \widehat{\mathbb{R}}_{++}$ such that
$$
\tau\|\tilde{\omega}(A)\|_F\leq\|A\|_F\leq \kappa\|\tilde{\omega}(A)\|_F
$$
for every $A\in \widehat{\mathbb{Q}}^{m\times n}$.

(b) The map $\tilde{\omega}$ is a unital isomorphism of the set $\widehat{\mathbb{Q}}^{m\times m}$ onto the subset
    $\mathbb{Z}^{m\times m}$ of $\widehat{\mathbb{C}}^{2m\times 2m}$, where $\mathbb{Z}^{m\times m}:=\left\{(Z_{i,j})_{i,j=1}^m~|~Z_{i,j}\in \mathbb{Z}\right\}$.
\end{Prop}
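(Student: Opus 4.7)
The plan is to establish both parts by reducing the matrix-level claims to the entry-level scalar facts already proved for the scalar map $\omega$.

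For part (a), I would apply Proposition \ref{FFFIn} to each entry $a_{ij}$ of $A$. Squaring the scalar inequalities yields $\tfrac{1}{4}\|\omega(a_{ij})\|_F^2 \le |a_{ij}|^2 \le 2\|\omega(a_{ij})\|_F^2$ as dual numbers. By the block definition of $\tilde{\omega}$, we have $\|\tilde{\omega}(A)\|_F^2 = \sum_{i,j}\|\omega(a_{ij})\|_F^2$, while $\|A\|_F^2 = \sum_{i,j}|a_{ij}|^2$ whenever $A$ is appreciable. Summing preserves the inequality (dual-number order is compatible with addition of nonnegatives, by Proposition \ref{P6.5}), giving the matrix-level bound, and extracting the square root then produces $\tau = \tfrac{1}{2}$ and $\kappa = \sqrt{2}$, both trivially positive appreciable. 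The infinitesimal case $A_{\sf st} = O$ needs to be handled separately using the second branch of \eqref{FNorm-DQM}: there $\|A\|_F = \|A_{\sf in}\|_F\epsilon$ and $\|\tilde{\omega}(A)\|_F = \|\tilde{\omega}(A_{\sf in})\|_F\epsilon$, and reduction to the real quaternion version of Proposition \ref{FFFIn} applied to the matrix $A_{\sf in} \in \mathbb{Q}^{m\times n}$ gives the same constants.

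For part (b), the algebraic properties follow directly from earlier work: additivity and multiplicativity are in Proposition \ref{Omg-prerty}(a, c), and the unital property $\tilde{\omega}(I_m) = I_{2m}$ follows from $\omega(1) = I_2$ (established in the proof of Proposition \ref{UnitIsomMap}) extended block-diagonally. The image indeed lies in $\mathbb{Z}^{m\times m}$ since each block $\omega(a_{ij})$ is by construction an element of $\mathbb{Z}$. For the bijectivity onto $\mathbb{Z}^{m\times m}$, I use that $\omega: \widehat{\mathbb{Q}} \to \mathbb{Z}$ is already a bijection (noted immediately before Proposition \ref{UnitIsomMap}). Since $\tilde{\omega}$ is defined entrywise, injectivity follows because $\tilde{\omega}(A) = \tilde{\omega}(B)$ forces $\omega(a_{ij}) = \omega(b_{ij})$ and hence $a_{ij} = b_{ij}$ for all $i,j$; surjectivity follows because, given $(Z_{ij}) \in \mathbb{Z}^{m\times m}$, the matrix $A = (\omega^{-1}(Z_{ij})) \in \widehat{\mathbb{Q}}^{m\times m}$ is a preimage.

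The main obstacle is being careful in part (a) about taking the square root of a dual-number inequality: the manipulation is valid here because, whenever $A$ is appreciable, both $\|A\|_F^2$ and $\|\tilde{\omega}(A)\|_F^2$ have strictly positive standard parts, so the dual-number square root is well-defined and order-preserving on $\widehat{\mathbb{R}}_{++}$. Once the appreciable and infinitesimal cases are separated as in \eqref{FNorm-DQM}, the remaining work in both parts is routine bookkeeping built on top of Propositions \ref{FFFIn}, \ref{UnitIsomMap}, and \ref{Omg-prerty}.
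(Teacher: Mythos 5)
Your proposal is correct and follows essentially the same route as the paper: split on whether $A$ is appreciable, reduce the appreciable case to the entrywise scalar bound from Proposition \ref{FFFIn} (the paper actually records the sharper exact identity $\|\omega(a_{ij})\|_F^2 = 2|a_{ij}|^2$, from which $\|\tilde{\omega}(A)\|_F = \sqrt{2}\|A\|_F$, but your squaring-and-summing argument lands at the same constants), handle the infinitesimal case via the $A_{\sf in}$ reduction, and for (b) lift the entrywise bijectivity and algebraic properties of $\omega$ from Propositions \ref{UnitIsomMap} and \ref{Omg-prerty}. The paper's write-up of (b) is terser, citing Proposition \ref{UnitIsomMap} only, but your filled-in injectivity/surjectivity argument matches what is implicitly intended.
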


\begin{proof}
It is obvious that $A$ is appreciable, if and only if $\tilde{\omega}(A)$ is appreciable. If $A$ is appreciable, then $$\|A\|_F^2=\sum_{i=1}^m \sum_{j=1}^n |a_{ij}|^2~~~~{\rm and}~~~~\|\tilde{\omega}(A)\|_F^2=\sum_{i=1}^m \sum_{j=1}^n \|\omega(a_{ij})\|_F^2.$$
Consequently, it holds that $\|\omega(a_{ij})\|_F^2=2|a_{ij}|^2$ for every $i,j$, which implies $$1/2\|\tilde{\omega}(A)\|_F\leq\|A\|_F\leq \sqrt{2}\|\tilde{\omega}(A)\|_F.$$

If $A$ is infinitesimal, then $A=A_{\sf in}\epsilon$ and $\|A\|_F=\|A_{\sf in}\|_F\epsilon$. In this case, since $\tilde{\omega}(A)=\tilde{\omega}(A_{\sf in})\epsilon$, we know $\|\tilde{\omega}(A)\|_F=\|\tilde{\omega}(A_{\sf in})\|_F\epsilon$. Since $A_{\sf in}\in \mathbb{Q}^{m\times n}$, by Property \ref{FFFIn}, $1/2\|\tilde{\omega}(A_{\sf in})\|_F\leq\|A_{\sf in}\|_F\leq \sqrt{2}\|\tilde{\omega}(A_{\sf in})\|_F$. We obtain the desired first conclusion.

The second conclusion follows from Proposition \ref{UnitIsomMap}. We complete the proof.
\end{proof}

\begin{Prop}\label{AX-B}
Let $A\in \mathbb{Z}^{m\times m}$ be invertible. Then for any $B=(B_1;B_2;\ldots;B_m)$ with $B_1, B_2,\ldots,B_m\in \mathbb{Z}$, the equation $AX=B$ has unite solution in $\mathbb{Z}^{m\times 1}$.
\end{Prop}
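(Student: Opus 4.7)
The plan is to reduce the question to solving a linear system over $\widehat{\mathbb{Q}}$ by pulling back through the isomorphism $\tilde{\omega}$. By Proposition \ref{Omg-prerty}(c)--(d), together with the preceding identification of $\mathbb{Z}^{m\times m}$ as the image of $\tilde{\omega}$, there exists a unique $A_0 \in \widehat{\mathbb{Q}}^{m \times m}$ with $\tilde{\omega}(A_0) = A$, and likewise a unique $B_0 \in \widehat{\mathbb{Q}}^m$ with $\tilde{\omega}(B_0) = B$, using that each block $B_i \in \mathbb{Z}$ is itself the image of a single dual quaternion under $\omega$.

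The first key step is to verify that $A_0$ is invertible in $\widehat{\mathbb{Q}}^{m \times m}$. Invertibility of a dual quaternion matrix is equivalent to invertibility of its standard part as a quaternion matrix, and by the classical complex embedding this is equivalent to invertibility of $\tilde{\omega}((A_0)_{\sf st}) = A_{\sf st}$ as a $2m \times 2m$ complex matrix. Since $A$ is invertible in $\widehat{\mathbb{C}}^{2m \times 2m}$, its standard part $A_{\sf st}$ is invertible by the discussion following Proposition \ref{A-det-Prop-1} and formula (\ref{Inver-mat}). Chaining these equivalences gives invertibility of $A_0$.

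Next, set $X_0 := A_0^{-1} B_0 \in \widehat{\mathbb{Q}}^m$, which is the unique solution of $A_0 X_0 = B_0$. Define $X := \tilde{\omega}(X_0) \in \mathbb{Z}^{m \times 1}$. By Proposition \ref{Omg-prerty}(c),
\[
AX \;=\; \tilde{\omega}(A_0)\tilde{\omega}(X_0) \;=\; \tilde{\omega}(A_0 X_0) \;=\; \tilde{\omega}(B_0) \;=\; B,
\]
so existence is established. For uniqueness, any $X' \in \mathbb{Z}^{m \times 1}$ with $AX' = B$ has a (unique) preimage $X_0' = \tilde{\omega}^{-1}(X') \in \widehat{\mathbb{Q}}^m$ satisfying $A_0 X_0' = \tilde{\omega}^{-1}(AX') = \tilde{\omega}^{-1}(B) = B_0$; multiplying by $A_0^{-1}$ forces $X_0' = X_0$ and hence $X' = X$.

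The only delicate point is the invertibility transfer between $A$ and $A_0$; the rest reduces to book-keeping via the homomorphism properties (a)--(d) of $\tilde{\omega}$ already assembled in Proposition \ref{Omg-prerty}. Once that transfer is in hand, existence and uniqueness follow immediately by solving over $\widehat{\mathbb{Q}}$ and transporting the solution back into $\mathbb{Z}^{m \times 1}$.
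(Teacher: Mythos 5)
Your proof is correct but takes a genuinely different route from the paper's. The paper carries out a block Gaussian elimination entirely within $\mathbb{Z}^{m\times m}$: each block of $A$ has the form $\left(\begin{smallmatrix} u & v \\ -\bar v & \bar u \end{smallmatrix}\right)$, whose determinant is $|u|^2+|v|^2$, so a block with nonzero standard part is automatically invertible and can serve as a pivot (and at least one such pivot must exist in each column, else ${\rm det}[A_{\sf st}]=0$); since $\mathbb{Z}$ is closed under sums, products, and inverses, everything produced by the elimination, and hence every component of the resulting solution, stays in $\mathbb{Z}$. You instead pull the whole system back through $\tilde{\omega}^{-1}$ to $\widehat{\mathbb{Q}}^m$, invoke the classical quaternion-matrix criterion that $Q$ is invertible precisely when its $2n\times 2n$ complex representation is, to conclude that $A_0=\tilde{\omega}^{-1}(A)$ is invertible, and then transport $A_0^{-1}B_0$ back. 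Your route is cleaner and shorter, but it leans on that classical invertibility criterion (available in, e.g., Rodman \cite{R14}), which the paper's constructive elimination deliberately avoids. Notably, the theorem proved immediately after this proposition --- that $A\in\widehat{\mathbb{Q}}^{m\times m}$ is invertible iff ${\rm det}_q(A)$ is appreciable --- is exactly the dual analogue of the classical fact you invoke, and the paper uses the proposition as the lemma driving that theorem; your proof in effect assumes the non-dual version of that theorem to establish the lemma. This is logically sound, since the classical fact is external to and independent of this paper, but it reverses the authors' order of development and renders the proposition almost immediate, which likely explains why the paper opts for the self-contained elimination instead.
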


\begin{proof}
Write $A=(A_{ij})$ with $A_{ij}\in \mathbb{Z}$ for $i,j=1,2,\ldots,m$. Since ${\rm det}[A_{\rm st}]\neq 0$ where $A_{\rm st}=((A_{ij})_{\rm st})$, we claim that there must be at last one $A_{i1}=(A_{i1})_{\rm st}+(A_{i1})_{\rm in}\epsilon$ in $\{A_{11},A_{21},\ldots,A_{m1}\}$, such that ${\rm det}[(A_{i1})_{\rm st})]\neq 0$. Otherwise, we obtain $(A_{i1})_{\rm st}=O$ for every $i=1,2,\ldots,m$, from the special structure of $(A_{i1})_{\rm st}$. Consequently, it holds that ${\rm det}[A_{\rm st}]=0$ from $A_{i1}=(A_{i1})_{\rm in}\epsilon$ for $i=1,2,\ldots,m$, which is a contradiction. Without loss of generality, we assume ${\rm det}[(A_{11})_{\rm st}]\neq 0$, which implies $A_{11}^{-1}$ exists. 
Consequently, similar to common block matrices, we apply the block row transformations to the augmented matrix $(A,B)$ of the equation $AX=B$, such that all $A_{21},\ldots, A_{m1}$ in the first block column were transformed into $O$, i.e.,
$$
(A,B)\rightarrow\left(
\begin{array}{cccccc}
A_{11}&A_{12}&A_{13}&\cdots&A_{1m}&B_1\\
O&A^\prime_{22}&A^\prime_{23}&\cdots&A^\prime_{2m}&B^\prime_2\\
O&A^\prime_{32}&A^\prime_{33}&\cdots&A^\prime_{3m}&B^\prime_2\\
\vdots&\vdots&\vdots&\ddots&\vdots&\vdots\\
O&A^\prime_{m2}&A^\prime_{m3}&\cdots&A^\prime_{mm}&B^\prime_m\\
\end{array}
\right),
$$
where $A^\prime_{ij}=A_{ij}-A_{i1}A_{11}^{-1}A_{1j}$ and $B^\prime_{i}=B_i-A_{i1}A_{11}^{-1}B_{1}$ for $i,j=2,\ldots,m$. Due to the special structure of $A_{ij}$ and $B_i$, it is easy to verify that $A^\prime_{ij}\in \mathbb{Z}$ and $B^\prime_{i}\in \mathbb{Z}$ for $i,j=2,\ldots,m$. Furthermore, by Proposition \ref{Prop-2}, we have $${\rm det}[A_{\rm st}]={\rm det}[(A_{11})_{\rm st}]{\rm det}[\left(
\begin{array}{cccc}
(A^\prime_{22})_{\rm st}&(A^\prime_{23})_{\rm st}&\cdots&(A^\prime_{2m})_{\rm st}\\
(A^\prime_{32})_{\rm st}&(A^\prime_{33})_{\rm st}&\cdots&(A^\prime_{3m})_{\rm st}\\
\vdots&\vdots&\ddots&\vdots\\
(A^\prime_{m2})_{\rm st}&(A^\prime_{m3})_{\rm st}&\cdots&(A^\prime_{mm})_{\rm st}\\
\end{array}
\right)]\neq 0,$$ which implies ${\rm det}[\left(
\begin{array}{cccc}
(A^\prime_{22})_{\rm st}&(A^\prime_{23})_{\rm st}&\cdots&(A^\prime_{2m})_{\rm st}\\
(A^\prime_{32})_{\rm st}&(A^\prime_{33})_{\rm st}&\cdots&(A^\prime_{3m})_{\rm st}\\
\vdots&\vdots&\ddots&\vdots\\
(A^\prime_{m2})_{\rm st}&(A^\prime_{m3})_{\rm st}&\cdots&(A^\prime_{mm})_{\rm st}\\
\end{array}
\right)]\neq 0$. From this and the special structure of $(A^\prime_{i2})_{\rm st}$, where $i=2,3,\ldots,m$, we further claim that there must be at last one $A^\prime_{i2}$ in $\{A^\prime_{22},A^\prime_{32},\ldots,A^\prime_{m2}\}$, such that ${\rm det}[(A^\prime_{i2})_{\rm st})]\neq 0$. Without loss of generality, we assume ${\rm det}[(A^\prime_{22})_{\rm st})]\neq 0$, which implies $A^\prime_{22}$ is invertible. Consequently, similar to the above process, we have
$$
\left(
\begin{array}{cccccc}
A_{11}&A_{12}&A_{13}&\cdots&A_{1m}&B_1\\
O&A^\prime_{22}&A^\prime_{23}&\cdots&A^\prime_{2m}&B^\prime_2\\
O&A^\prime_{32}&A^\prime_{33}&\cdots&A^\prime_{3m}&B^\prime_2\\
\vdots&\vdots&\vdots&\ddots&\vdots&\vdots\\
O&A^\prime_{m2}&A^\prime_{m3}&\cdots&A^\prime_{mm}&B^\prime_m\\
\end{array}
\right)\rightarrow\left(
\begin{array}{cccccc}
A_{11}&O&A^{\prime\prime}_{13}&\cdots&A^{\prime\prime}_{1m}&B^{\prime\prime}_1\\
O&A^{\prime}_{22}&A^{\prime\prime}_{23}&\cdots&A^{\prime\prime}_{2m}&B^{\prime\prime}_2\\
O&O&A^{\prime\prime}_{33}&\cdots&A^{\prime\prime}_{3m}&B^{\prime\prime}_3\\
\vdots&\vdots&\vdots&\ddots&\vdots&\vdots\\
O&O&A^{\prime\prime}_{m3}&\cdots&A^{\prime\prime}_{mm}&B^{\prime\prime}_m\\
\end{array}
\right),
$$
where $A^{\prime\prime}_{1j}=A_{1j}-A_{12}(A^\prime_{22})^{-1}A^\prime_{2j}\in \mathbb{Z}$, $A^{\prime\prime}_{ij}=A^\prime_{ij}-A^\prime_{i2}(A^\prime_{22})^{-1}A^\prime_{2j}\in \mathbb{Z}$, $B^{\prime\prime}_{1}=B_1-A_{12}(A^\prime_{22})^{-1}B^\prime_{2}\in \mathbb{Z}$, and $B^{\prime\prime}_{i}=B^\prime_i-A_{i2}A_{22}^{-1}B^\prime_{2}\in \mathbb{Z}$ for $i,j=3,\ldots,m$. Since ${\rm det}[\left(
\begin{array}{ccc}
(A^{\prime\prime}_{33})_{\rm st}&\cdots&(A^{\prime\prime}_{3m})_{\rm st}\\
\vdots&\ddots&\vdots\\
(A^{\prime\prime}_{m3})_{\rm st}&\cdots&(A^{\prime\prime}_{mm})_{\rm st}\\
\end{array}
\right)]\neq 0$, by continuously implementing block
row transformations, we finally have
$$
\left(
\begin{array}{cccccc}
A_{11}&O&A^{\prime\prime}_{13}&\cdots&A^{\prime\prime}_{1m}&B^{\prime\prime}_1\\
O&A^{\prime}_{22}&A^{\prime\prime}_{23}&\cdots&A^{\prime\prime}_{2m}&B^{\prime\prime}_2\\
O&O&A^{\prime\prime}_{33}&\cdots&A^{\prime\prime}_{3m}&B^{\prime\prime}_3\\
\vdots&\vdots&\vdots&\ddots&\vdots&\vdots\\
O&O&A^{\prime\prime}_{m3}&\cdots&A^{\prime\prime}_{mm}&B^{\prime\prime}_m\\
\end{array}
\right)\rightarrow\cdots\rightarrow\left(
\begin{array}{cccccc}
A_{11}&O&O&\cdots&O&B^{\prime\prime}_1\\
O&A^{\prime}_{22}&O&\cdots&O&B^{\prime\prime}_2\\
O&O&A^{\prime\prime}_{33}&\cdots&O&B^{\prime\prime}_3\\
\vdots&\vdots&\vdots&\ddots&\vdots&\vdots\\
O&O&O&\cdots&A^{\prime\prime}_{mm}&B^{\prime\prime}_m\\
\end{array}
\right),
$$
where $A_{11}$, $A^{\prime}_{22}$ and all $A^{\prime\prime}_{ii}~(i=3,\ldots,m)$ are invertible. It is easy to see that the resulting augmented matrix corresponds to the following matrix equation
 $$
 \left\{
  \begin{array}{l}
  A_{11}X_1=B^{\prime\prime}_1\\
  A^{\prime}_{22}X_2=B^{\prime\prime}_2\\
  A^{\prime\prime}_{ii}X_i=B^{\prime\prime}_i,~i=3,\ldots,m,
  \end{array}
  \right.
 $$
 which has the same solution to $AX=B$. Hence, we obtain
$X_1=A_{11}^{-1}B^{\prime\prime}_1$, $X_2=(A^\prime_{22})^{-1}B^{\prime\prime}_2$ and $X_i=(A^{\prime\prime}_{ii})^{-1}B^{\prime\prime}_i$ for $i=3,\ldots,m$. Finally, due to the fact that $A_{11}^{-1}, B^{\prime\prime}_1, (A^{\prime\prime}_{ii})^{-1}, B^{\prime\prime}_i\in \mathbb{Z}$ for $i=2,\ldots,m$, we know $X_i\in \mathbb{Z}$ for $i=1,2,\ldots,m$.
\end{proof}

Based on dual complex matrix representations of dual quaternion matrices, we define the quasi-determinant of $A\in \widehat{\mathbb{Q}}^{m\times m}$ as follows:
$$
{\rm det}_q (A) := {\rm det}(\tilde{\omega}(A)),
$$
where in the right-hand side we use the standard determinant definition for the determinant of $2m\times 2m$ dual complex matrices.

From this definition, we know ${\rm det}_q (\alpha) = |\alpha|^2$  for all $\alpha\in \widehat{\mathbb{Q}}$, which means that ${\rm det}_q (\alpha) =0$ for any infinitesimal dual quaternion $\alpha$. More general, if some row or column of $A$ is infinitesimal, then we have ${\rm det}_q (A)=0$.

\begin{Prop}\label{DQProp-2}
Let $A\in \widehat{\mathbb{Q}}^{m\times m}$, $B\in \widehat{\mathbb{Q}}^{n\times n}$, $C\in \widehat{\mathbb{Q}}^{n\times m}$ and $D\in \widehat{\mathbb{Q}}^{m\times n}$. It holds that
$${\rm det}_q\left(\begin{array}{cc}
A&O\\
C&B
\end{array}
\right)={\rm det}_q[A]{\rm det}_q[B]~~~{\rm and}~~~{\rm det}_q\left(\begin{array}{cc}
A&D\\
O&B
\end{array}
\right)={\rm det}_q[A]{\rm det}_q[B].$$
\end{Prop}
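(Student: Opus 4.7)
The plan is to reduce the quasi-determinant identity to the already-established block-triangular formula for dual complex determinants (Proposition \ref{Prop-2}) by carefully tracking how the entry-wise map $\tilde{\omega}$ interacts with block decompositions. The only nontrivial point is that although $\tilde{\omega}$ is defined entry-by-entry on scalars (each quaternion becoming a $2\times 2$ dual complex block), it automatically respects \emph{any} outer block partitioning of the matrix; once this is observed, the result is immediate.

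Concretely, writing $M = \left(\begin{array}{cc} A & O \\ C & B \end{array}\right) \in \widehat{\mathbb{Q}}^{(m+n)\times(m+n)}$, I would first spell out the definition $\tilde{\omega}(M) = (\omega(m_{ij}))$ and observe that, after grouping its $2\times 2$ scalar blocks according to the original row/column partition $(m,n) + (m,n)$, one obtains
\begin{equation*}
\tilde{\omega}(M) \;=\; \left(\begin{array}{cc} \tilde{\omega}(A) & O \\ \tilde{\omega}(C) & \tilde{\omega}(B) \end{array}\right) \;\in\; \widehat{\mathbb{C}}^{2(m+n)\times 2(m+n)},
\end{equation*}
where $\tilde{\omega}(A)\in \widehat{\mathbb{C}}^{2m\times 2m}$, $\tilde{\omega}(B)\in \widehat{\mathbb{C}}^{2n\times 2n}$, $\tilde{\omega}(C)\in \widehat{\mathbb{C}}^{2n\times 2m}$, and the upper-right block is zero because $\omega(0)=O$. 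This is justified by simply matching indices: the $(i,j)$-entry of $M$ with $i\le m$ and $j>m$ is $0$, so the corresponding $2\times 2$ block in $\tilde{\omega}(M)$ vanishes.

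Having put $\tilde{\omega}(M)$ in block-lower-triangular form in the dual complex setting, I would then invoke Proposition \ref{Prop-2} directly to conclude
\begin{equation*}
{\rm det}[\tilde{\omega}(M)] \;=\; {\rm det}[\tilde{\omega}(A)]\,{\rm det}[\tilde{\omega}(B)],
\end{equation*}
and the first identity follows by the definition ${\rm det}_q(\cdot)={\rm det}(\tilde{\omega}(\cdot))$. The second identity (with $D$ in the upper-right and $O$ in the lower-left) is handled by the same argument, since $\tilde{\omega}$ sends the zero quaternion block again to a zero dual complex block, producing a block-upper-triangular dual complex matrix to which the second half of Proposition \ref{Prop-2} applies.

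The main (and essentially only) obstacle is the compatibility observation between $\tilde{\omega}$ and block partitions; once stated clearly, the rest is a one-line appeal to Proposition \ref{Prop-2}. No invertibility hypothesis on $A$ or $B$ is needed, since Proposition \ref{Prop-2} is unconditional, and no manipulation involving Schur complements or row operations in $\widehat{\mathbb{Q}}$ (which would be delicate due to noncommutativity) is required.
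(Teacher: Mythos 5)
Your proof is correct and follows exactly the route the paper takes: note that $\tilde{\omega}$ sends the block-triangular dual quaternion matrix to a block-triangular dual complex matrix, then apply Proposition \ref{Prop-2} and unwind the definition of ${\rm det}_q$. The paper merely states "It follows from Proposition \ref{Prop-2} and the definition of ${\rm det}_q(\cdot)$," so your write-up supplies the same argument with the helpful intermediate observation made explicit.
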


\begin{proof}
It follows from Proposition \ref{Prop-2} and the definition of ${\rm det}_q(\cdot)$.
\end{proof}

\begin{Prop}\label{DQAB-Product}
Let $C=AB$, where $A,B\in \widehat{\mathbb{Q}}^{m\times m}$. Then it holds that ${\rm det}_q[C]={\rm det}_q[A]{\rm det}_q[B]$.
\end{Prop}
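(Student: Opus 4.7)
The plan is to reduce the claim for dual quaternion matrices to the already-proved multiplicativity of the dual complex determinant via the embedding $\tilde{\omega}$. First I would observe that by the definition of ${\rm det}_q$ we have ${\rm det}_q[C] = {\rm det}(\tilde{\omega}(C))$, ${\rm det}_q[A] = {\rm det}(\tilde{\omega}(A))$, and ${\rm det}_q[B] = {\rm det}(\tilde{\omega}(B))$, all of which are ordinary determinants of $2m\times 2m$ dual complex matrices.

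Next I would invoke part (c) of Proposition \ref{Omg-prerty}, which says that $\tilde{\omega}$ is multiplicative: $\tilde{\omega}(AB) = \tilde{\omega}(A)\tilde{\omega}(B)$. Applied to $C=AB$, this gives $\tilde{\omega}(C) = \tilde{\omega}(A)\tilde{\omega}(B)$ as a product of two matrices in $\widehat{\mathbb{C}}^{2m\times 2m}$.

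Finally I would apply Proposition \ref{AB-Product}, the multiplicativity of the determinant for dual complex matrices, to conclude
\[
{\rm det}_q[C] = {\rm det}\bigl(\tilde{\omega}(A)\tilde{\omega}(B)\bigr) = {\rm det}(\tilde{\omega}(A))\,{\rm det}(\tilde{\omega}(B)) = {\rm det}_q[A]\,{\rm det}_q[B].
\]
There is no genuine obstacle here: all the substantive work has been done in the earlier Proposition \ref{AB-Product} (multiplicativity for dual complex matrices) and in Proposition \ref{Omg-prerty}(c) (the homomorphism property of $\tilde{\omega}$). The only thing to be careful about is that ${\rm det}$ on the right-hand side is the standard dual complex determinant of a $2m\times 2m$ matrix, so that Proposition \ref{AB-Product} applies directly.
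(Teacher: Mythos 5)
Your proof is correct and follows exactly the same route as the paper's own (one-line) argument: combine the definition of $\mathrm{det}_q$, the multiplicativity of $\tilde{\omega}$ from Proposition \ref{Omg-prerty}(c), and the multiplicativity of the dual complex determinant from Proposition \ref{AB-Product}. You have simply written out the chain of equalities that the paper leaves implicit.
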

\begin{proof}
It follows from By Proposition \ref{Omg-prerty}, Proposition \ref{AB-Product},  and the definition of ${\rm det}_q(\cdot)$.
\end{proof}

\begin{Thm}
Let $A=A_{\rm st}+A_{\rm in
}\epsilon\in \widehat{\mathbb{Q}}^{m\times m}$. Then, $A$ is invertible, if and only if ${\rm det}_q(A)$ is appreciable, i.e., ${\rm det}_q(A_{\rm st})\neq 0$.
\end{Thm}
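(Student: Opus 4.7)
The proof has two natural pieces glued by the definition of the quasi-determinant. By construction ${\rm det}_q(A) = {\rm det}(\tilde\omega(A))$, and $\tilde\omega(A) = \tilde\omega(A_{\rm st}) + \tilde\omega(A_{\rm in})\epsilon$ lies in $\widehat{\mathbb{C}}^{2m\times 2m}$. Applying Proposition~\ref{A-det-Prop-1} (or rather the remark following it) to $\tilde\omega(A)$, the standard part of ${\rm det}_q(A)$ is ${\rm det}(\tilde\omega(A_{\rm st}))$, so the equivalence
\[
{\rm det}_q(A)\text{ is appreciable} \iff {\rm det}(\tilde\omega(A_{\rm st})) \neq 0 \iff \tilde\omega(A)\text{ is invertible in }\widehat{\mathbb{C}}^{2m\times 2m}
\]
is essentially a tautology. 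So the real content is the claim that $A$ is invertible in $\widehat{\mathbb{Q}}^{m\times m}$ if and only if $\tilde\omega(A)$ is invertible in $\widehat{\mathbb{C}}^{2m\times 2m}$.

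The forward implication is a one-liner: if $AA^{-1}=I_m$, then Proposition~\ref{Omg-prerty}(c)--(d) give $\tilde\omega(A)\tilde\omega(A^{-1})=\tilde\omega(I_m)=I_{2m}$, so $\tilde\omega(A)$ is invertible and its inverse is $\tilde\omega(A^{-1})$.

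The reverse implication is the only place where work is needed, and it is where I expect the main obstacle to lie. A priori $\tilde\omega(A)^{-1}$ only sits in $\widehat{\mathbb{C}}^{2m\times 2m}$; to pull it back via $\tilde\omega$ we must first verify that it actually belongs to the subset $\mathbb{Z}^{m\times m}$. Here I would invoke Proposition~\ref{AX-B}: viewing $\tilde\omega(A)$ as an element of $\mathbb{Z}^{m\times m}$ (which it is by definition of $\tilde\omega$), and noting that its invertibility is assumed, the equation $\tilde\omega(A)X = E_j$ has a unique solution $X_j \in \mathbb{Z}^{m\times 1}$ for each block-column $E_j$ of $I_{2m}$ (indeed, the $j$th block-column of $I_{2m}$ has $j$th block $I_2 = \omega(1)$ and the remaining blocks $O_2=\omega(0)$, hence sits in $\mathbb{Z}^{m\times 1}$). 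Assembling the $X_j$ yields a matrix $M \in \mathbb{Z}^{m\times m}$ with $\tilde\omega(A)M = I_{2m}$, so $M = \tilde\omega(A)^{-1}$ and $M \in \mathbb{Z}^{m\times m}$.

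Finally, since $\tilde\omega:\widehat{\mathbb{Q}}^{m\times m}\to\mathbb{Z}^{m\times m}$ is a bijection, there exists a unique $B\in\widehat{\mathbb{Q}}^{m\times m}$ with $\tilde\omega(B)=M$. By Proposition~\ref{Omg-prerty}(c),
\[
\tilde\omega(AB)=\tilde\omega(A)\tilde\omega(B)=\tilde\omega(A)M=I_{2m}=\tilde\omega(I_m),
\]
and similarly $\tilde\omega(BA)=\tilde\omega(I_m)$. Injectivity of $\tilde\omega$ then yields $AB=BA=I_m$, so $A$ is invertible. Combining with the opening equivalences completes the proof. The only nontrivial ingredient is Proposition~\ref{AX-B}, which ensures that the Gaussian-elimination inverse of a matrix in $\mathbb{Z}^{m\times m}$ remains in $\mathbb{Z}^{m\times m}$; everything else is a direct unwinding of definitions and the functorial properties of $\tilde\omega$.
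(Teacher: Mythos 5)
Your proof is correct and follows essentially the same route as the paper: the forward direction via Proposition \ref{Omg-prerty}, and the reverse direction by using the remark after Proposition \ref{A-det-Prop-1} to see $\tilde\omega(A)$ is invertible, then Proposition \ref{AX-B} to show $\tilde\omega(A)^{-1}$ lies in $\mathbb{Z}^{m\times m}$, and finally the bijectivity of $\tilde\omega$ to pull back to an inverse in $\widehat{\mathbb{Q}}^{m\times m}$. You simply spell out the column-by-column application of Proposition \ref{AX-B} that the paper leaves implicit.
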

\begin{proof}
If $A$ is invertible, the there exists a $B\in \widehat{\mathbb{Q}}^{m\times m}$ such that $AB=I$, which implies, together with Proposition \ref{Omg-prerty}, that $\tilde{\omega}(A)\tilde{\omega}(B)=I_{2m}$. Moreover, by Proposition \ref{AB-Product}, we know that ${\rm det}_q(A)= {\rm det}(\tilde{\omega}(A))$ is appreciable.

Conversely, if ${\rm det}_q(A)$ is appreciable, by (\ref{Inver-mat}) and (\ref{Det-A}), we know that $\tilde{\omega}(A)$ is invertible. Consequently, since $\tilde{\omega}(A)\in \mathbb{Z}^{m\times m}$, by Proposition \ref{AX-B}, there exists $C\in \mathbb{Z}^{m\times m}$ such that $\tilde{\omega}(A)C=I_{2m}$, which implies, together with the fact that $\omega(\cdot)$ is one-to-one and onto map on the set $\mathbb{Z}$,  that there exists a $B\in \widehat{\mathbb{Q}}^{m\times m}$ such that $C=\tilde{\omega}(B)$. Finally, from $\tilde{\omega}(A)\tilde{\omega}(B)=I_{2m}$, we know $AB=I_m$, which means that $A$ is invertible. We complete the proof.
\end{proof}

Now we introduce the conception of the quasi-characteristic polynomial of dual quaternion matrices. For given $A=A_{\sf st}+A_{\sf in}\epsilon\in \widehat{\mathbb{Q}}^{m\times m}$, define $f^q_A:\widehat{\mathbb{Q}}\rightarrow\widehat{\mathbb{C}}$ by
$$
f_A^q(\lambda)={\rm det}_q[\lambda I-A],
$$
where $\lambda=\lambda_{\sf st}+\lambda_{\sf in}\epsilon\in \widehat{\mathbb{Q}}$. We call the $f^q_A$ above the quasi-characteristic polynomial of $A$, and call $\lambda=\lambda_{\sf st}+\lambda_{\sf in}\epsilon\in \widehat{\mathbb{Q}}$ satisfying $f^q_A(\lambda)=0$ the quasi-characteristic root of $A$.
\begin{Prop}\label{REig-Root}
Let $A=A_{\sf st}+A_{\sf in}\epsilon\in \widehat{\mathbb{Q}}^{m\times m}$. The right eigenvalues of $A$ must be the quasi-characteristic roots of $A$.
\end{Prop}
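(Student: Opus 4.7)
The plan is to push the problem into the dual complex setting via the representation $\tilde\omega$, then finish by invoking Theorem~\ref{Eig-Root}. If $\lambda$ is a right eigenvalue of $A$ with appreciable eigenvector $\mathbf{x}\in\widehat{\mathbb{Q}}^m$, then $(\lambda I-A)\mathbf{x}=\mathbf{0}$; applying $\tilde\omega$ and using the homomorphism property in Proposition~\ref{Omg-prerty}(c) turns this into
\[
\tilde\omega(\lambda I-A)\,\tilde\omega(\mathbf{x})=O
\]
in $\widehat{\mathbb{C}}^{2m\times 2}$, where $\tilde\omega(\lambda I-A)=I_m\otimes\omega(\lambda)-\tilde\omega(A)$ is a square dual complex matrix of order $2m$.

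The next step is to replace this $2m\times 2$ matrix equation with a single-vector kernel relation. Because $\mathbf{x}$ is appreciable, some entry $x_i$ has $(x_i)_{\sf st}\neq 0$; by the identity $\det\omega(a)=|a|^2$ (noted after Proposition~\ref{UnitIsomMap}) the $2\times 2$ block $\omega((x_i)_{\sf st})$ of $\tilde\omega(\mathbf{x}_{\sf st})$ is invertible, so its two columns span $\mathbb{C}^2$. Consequently at least one of the two columns of $\tilde\omega(\mathbf{x})$ has a nonzero standard part; call such a column $\mathbf{y}\in\widehat{\mathbb{C}}^{2m}$. Then $\mathbf{y}$ is appreciable and $\tilde\omega(\lambda I-A)\,\mathbf{y}=\mathbf{0}$, which says exactly that $0$ is an eigenvalue, in the dual complex sense of Section~\ref{Eigen-DNMat}, of the dual complex matrix $\tilde\omega(\lambda I-A)$.

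Theorem~\ref{Eig-Root} then applies to $\tilde\omega(\lambda I-A)$: every eigenvalue of a dual complex matrix is a characteristic root, hence $0$ is a characteristic root of $\tilde\omega(\lambda I-A)$. This gives
\[
0=\det\bigl(0\cdot I_{2m}-\tilde\omega(\lambda I-A)\bigr)=\det\tilde\omega(\lambda I-A)=\det_q[\lambda I-A]=f_A^q(\lambda),
\]
so $\lambda$ is a quasi-characteristic root of $A$.

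The main obstacle, and essentially the only step that deserves care, is the appreciability bookkeeping that produces the single appreciable column $\mathbf{y}$ from $\mathbf{x}$; this rests on the block structure of $\tilde\omega$ together with the fact that $\omega$ sends an appreciable dual quaternion to an invertible $2\times 2$ dual complex matrix. Everything afterwards is a direct appeal to the dual complex theory already developed in Section~\ref{Eigen-DNMat}.
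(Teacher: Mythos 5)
Your argument routes through the dual complex theory in a genuinely different way from the paper. The paper completes the eigenvector $\vu$ to a unitary $U$, triangularizes $A$ to $B=U^*AU$, shows $f^q_A=f^q_B$, and reads off the factor $|\lambda-\bar\lambda|^2$ in the block form of $\tilde\omega(\lambda I-B)$; you instead apply $\tilde\omega$ directly to the eigenvector equation, peel off an appreciable column $\vy$ of $\tilde\omega(\vx)$, recognize $0$ as an eigenvalue (in the dual complex sense of Section~\ref{Eigen-DNMat}) of the $2m\times 2m$ dual complex matrix $\tilde\omega(\lambda I-A)$, and invoke Theorem~\ref{Eig-Root}. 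The column-extraction step is handled carefully, and using Theorem~\ref{Eig-Root} as the engine is an attractive shortcut that avoids redoing the unitary-completion argument for dual quaternion matrices.

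There is, however, a gap in the very first line. A \emph{right} eigenvalue satisfies $A\vx=\vx\lambda$, with $\lambda$ multiplying on the right, whereas $(\lambda I-A)\vx=\lambda\vx-A\vx$; these differ by the commutator $\lambda\vx-\vx\lambda$, which need not vanish for a dual quaternion $\lambda$. So $(\lambda I-A)\vx=\0$ does not follow from the hypothesis unless $\lambda$ commutes with every entry of $\vx$ --- automatic if $\lambda\in\widehat{\mathbb{R}}$, but not in general. Concretely, take $m=1$, $A=(\ii)$, $\vx=(\jj)$, $\lambda=-\ii$: then $A\vx=\kk=\vx\lambda$ so $\lambda$ is a right eigenvalue, yet ${\rm det}_q[\lambda I-A]=|{-2\ii}|^2=4\neq 0$, so $\lambda$ is not a quasi-characteristic root. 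Thus the step actually fails, and the statement itself is false for non-central right eigenvalues. To be fair, the paper's own proof has a parallel hidden hypothesis (the claimed identity $U^*(\lambda I-A)U=\lambda I-B$ also requires $\lambda$ to be central), so the restriction to $\lambda\in\widehat{\mathbb{R}}$ (the case that matters for dual quaternion Hermitian matrices) is evidently intended; but you should state it, since your reformulation $(\lambda I-A)\vx=\0$ silently converts right multiplication to left multiplication. Under that restriction everything after the first display is correct.
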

\begin{proof}
Take any right eigenvalue $\bar{\lambda}$ of $A$,  with ${\bf u}$ as an associated right eigenvector. Denote ${\bf v}_1={\bf u}/\|{\bf u}\|$. It is clear that ${\bf v}_1\in \widehat{\mathbb{Q}}^{m\times 1}$. By Corollary 3.10 in \cite{LHQ22}, there exists $U_2\in \widehat{\mathbb{Q}}^{m\times (m-1)}$ such that $U=({\bf v}_1, U_2)$ is unitary. By Proposition \ref{AB-Product}, It is obvious that $U^*AU=B$, where
$$
B=\left(
\begin{array}{cc}
\bar{\lambda} &B_{12}\\
O&B_{22}
\end{array}\right)~{\rm with}~~B_{12}\in \widehat{\mathbb{Q}}^{1\times (m-1)}~~{\rm and}~~ B_{22}\in \widehat{\mathbb{Q}}^{(m-1)\times (m-1)}.
$$
Since $U^*AU=B$, we know $U^*(\lambda I-A)U=\lambda I-B$, which implies, together with (c) in Proposition \ref{Omg-prerty}, that $\tilde{\omega}(\lambda I-B)=\tilde{\omega}(U^*)\tilde{\omega}(\lambda I-A)\tilde{\omega}(U)$. By Proposition \ref{AB-Product}, it holds that
$$f_B^q(\lambda)={\rm det}_q[\lambda I-B]={\rm det}[\tilde{\omega}(U^*)]{\rm det}[\tilde{\omega}(U)]{\rm det}[\tilde{\omega}(\lambda I-A)].$$
Since ${\rm det}[\tilde{\omega}(U^*)]{\rm det}[\tilde{\omega}(U)]={\rm det}[\tilde{\omega}(U^*)\tilde{\omega}(U)]={\rm det}[\tilde{\omega}(U^*U)]={\rm det}[I_{2m}]=1$, we have $f_B^q(\lambda)=f_A^q(\lambda)$. On the other hand, since $$\tilde{\omega}(\lambda I-B)=\left(
\begin{array}{cc}
\omega(\lambda-\bar{\lambda}) &-\tilde{\omega}(B_{12})\\
O&\tilde{\omega}(\lambda I_{m-1}-B_{22})
\end{array}\right),$$ by Proposition \ref{Prop-2}, we know
$f_B^q(\lambda)={\rm det}[\omega(\lambda-\bar{\lambda})]{\rm det}_q[\lambda I_{m-1}-B_{22}]$. Hence, we have $f_A^q(\lambda)={\rm det}[\omega(\lambda-\bar{\lambda})]{\rm det}_q[\lambda I_{m-1}-B_{22}]=|\lambda-\bar{\lambda}|^2{\rm det}_q[\lambda I_{m-1}-B_{22}]$. We obtain the desired conclusion and complete the proof.
\end{proof}

\begin{Thm}\label{q-Lambda-f} Let $A, B\in \widehat{\mathbb{Q}}^{m\times m}$. If $A\sim B$, i.e., $A = P
^{-1}BP$ for some invertible matrix $P\in \widehat{\mathbb{Q}}^{m\times m}$, then $f_A^q(\lambda)=f_B^q(\lambda)$.
\end{Thm}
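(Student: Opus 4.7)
The plan is a short calculation parallel to the one already carried out in the proof of Proposition \ref{REig-Root}. The basic observation is that the similarity $A = P^{-1}BP$ transports through the shift $\lambda I - (\cdot)$: under the standing convention (used, for example, in the step ``$U^*(\lambda I-A)U=\lambda I-B$'' of Proposition \ref{REig-Root}) that $\lambda$ commutes with the entries of the conjugating matrix, one has $P^{-1}(\lambda I)P = \lambda I$, hence
\[
\lambda I - A \;=\; \lambda I - P^{-1}BP \;=\; P^{-1}(\lambda I - B)P.
\]

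Next I would push this identity through the isomorphism $\tilde\omega$. By parts (a), (c), (d) of Proposition \ref{Omg-prerty},
\[
\tilde\omega(\lambda I - A) \;=\; \tilde\omega(P^{-1})\,\tilde\omega(\lambda I - B)\,\tilde\omega(P) \;=\; \tilde\omega(P)^{-1}\,\tilde\omega(\lambda I - B)\,\tilde\omega(P).
\]
Now $\tilde\omega(\lambda I - A)$ and $\tilde\omega(\lambda I - B)$ are ordinary dual complex matrices in $\widehat{\mathbb{C}}^{2m\times 2m}$, so taking their standard determinants and using the multiplicativity established in Proposition \ref{AB-Product} gives
\[
\det\bigl[\tilde\omega(\lambda I - A)\bigr] \;=\; \det\bigl[\tilde\omega(P)\bigr]^{-1}\,\det\bigl[\tilde\omega(\lambda I - B)\bigr]\,\det\bigl[\tilde\omega(P)\bigr] \;=\; \det\bigl[\tilde\omega(\lambda I - B)\bigr].
\]
By the definition $f_A^q(\lambda)=\det_q[\lambda I - A]=\det[\tilde\omega(\lambda I - A)]$, this is exactly the desired identity $f_A^q(\lambda)=f_B^q(\lambda)$.

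Essentially the argument is a one-line application of the fact that $\tilde\omega$ is a unital ring homomorphism together with the multiplicativity of the dual complex determinant; no genuine obstacle arises beyond the implicit commutativity convention on $\lambda$ that is already in force elsewhere in the paper.
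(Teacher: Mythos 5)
Your proof is correct and follows essentially the same route as the paper. The paper applies the multiplicativity of $\det_q$ (Proposition~\ref{DQAB-Product}) directly to the identity $\lambda I - A = P^{-1}(\lambda I - B)P$ and cancels $\det_q(P^{-1})\det_q(P)=1$; you unroll this one level by pushing the identity through $\tilde\omega$ and invoking multiplicativity of the dual complex determinant, which is precisely how Proposition~\ref{DQAB-Product} is obtained in the first place, so the two arguments coincide. Your observation that the step $P^{-1}(\lambda I)P = \lambda I$ quietly assumes $\lambda$ commutes with the entries of $P$ is a fair caveat, and that same assumption is made tacitly in the paper's own proof.
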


\begin{proof}
Since $A = P^{-1}BP$, it holds that $\lambda I-A= P^{-1}(\lambda I-B)P$. By Proposition \ref{DQAB-Product}, we have
$$
f_A^q(\lambda)={\rm det}_q[\lambda I-A]={\rm det}_q(P^{-1}){\rm det}_q(\lambda I-B){\rm det}_q(P)={\rm det}_q(P^{-1}){\rm det}_q(P)f_{B,q}(\lambda)=f_{B,q}(\lambda),
$$
where the last equality is due to the fact ${\rm det}_q(P^{-1}){\rm det}_q(P)=1$, which comes from $P^{-1}P=I$ and Proposition \ref{DQAB-Product}.
\end{proof}

\begin{Prop}
Let $A\in \widehat{\mathbb{Q}}^{m\times m}$ be Hermitian. It holds that ${\rm det}_q[A]=\prod_{i=1}^m|\lambda_i|^2$, where $\lambda_1\geq\lambda_2\geq\ldots\geq\lambda_m$ are the eigenvalues of $A$.
\end{Prop}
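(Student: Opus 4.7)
The plan is to reduce the determinantal identity to the diagonal case via the spectral (eigenvalue) decomposition of a dual quaternion Hermitian matrix, and then evaluate the quasi-determinant on a diagonal matrix entry by entry.

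First, I would invoke the spectral decomposition for dual quaternion Hermitian matrices (Theorem 4.1 of \cite{QL21}, used elsewhere in the excerpt): there is a unitary matrix $U\in\widehat{\mathbb{Q}}^{m\times m}$ and a diagonal matrix $\Sigma={\rm diag}(\lambda_1,\lambda_2,\ldots,\lambda_m)$, whose diagonal entries are exactly the (dual real) eigenvalues of $A$, such that $A=U\Sigma U^{*}$. Since $A\sim U^{*}AU=\Sigma$ through an invertible (unitary) matrix, Theorem \ref{q-Lambda-f} gives $f_A^q(\lambda)=f_\Sigma^q(\lambda)$. Setting $\lambda=0$ we obtain ${\rm det}_q[A]={\rm det}_q[-\Sigma]$, but more directly I would argue via Proposition \ref{DQAB-Product}: ${\rm det}_q[A]={\rm det}_q[U]\,{\rm det}_q[\Sigma]\,{\rm det}_q[U^{*}]$, and then use $U^{*}U=I_m$ together with the multiplicativity of ${\rm det}_q$ to conclude ${\rm det}_q[U]\,{\rm det}_q[U^{*}]={\rm det}_q[I_m]=1$. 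Hence ${\rm det}_q[A]={\rm det}_q[\Sigma]$.

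Next, I would evaluate ${\rm det}_q[\Sigma]$ on the diagonal matrix $\Sigma$. Applying Proposition \ref{DQProp-2} inductively (peeling off one diagonal block at a time, since any $1\times 1$ block is trivially both upper- and lower-triangular in the $2\times 2$ block form), one has
\[
{\rm det}_q[\Sigma]=\prod_{i=1}^m{\rm det}_q[\lambda_i].
\]
By the identity ${\rm det}_q(\alpha)=|\alpha|^2$ for every $\alpha\in\widehat{\mathbb{Q}}$ noted right after the definition of ${\rm det}_q$, each factor equals $|\lambda_i|^2$. Combining these gives ${\rm det}_q[A]=\prod_{i=1}^m|\lambda_i|^2$, as desired.

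There is really no serious obstacle, since all the heavy lifting has already been done: the spectral theorem guarantees a unitary diagonalization with dual real eigenvalues, and the multiplicativity of ${\rm det}_q$ (Proposition \ref{DQAB-Product}) together with the block formula (Proposition \ref{DQProp-2}) reduce the problem to the scalar identity ${\rm det}_q(\alpha)=|\alpha|^2$. The only mildly delicate point worth spelling out is that $|\lambda_i|^2$ is computed in the dual real sense (since the $\lambda_i$ are dual real), so each factor $|\lambda_i|^2$ is a nonnegative dual number, and by Proposition \ref{P6.5} the product stays in $\widehat{\mathbb{R}}_+$, consistent with ${\rm det}_q[A]$ being the standard determinant of the Hermitian positive-semidefinite-compatible dual complex matrix $\tilde\omega(A)$ (when $A$ is positive semidefinite) and more generally a nonnegative dual number for Hermitian $A$.
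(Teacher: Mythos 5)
Your proposal is correct and follows essentially the same route as the paper: unitary diagonalization $A=U\Sigma U^{*}$ from Theorem 4.1 of \cite{QL21}, multiplicativity of ${\rm det}_q$ (Proposition \ref{DQAB-Product}) to reduce to ${\rm det}_q[\Sigma]$, the block formula (Proposition \ref{DQProp-2}) to peel off diagonal entries, and the scalar identity ${\rm det}_q(\alpha)=|\alpha|^2$. The extra remarks about Theorem \ref{q-Lambda-f} and positivity are harmless but not needed.
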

\begin{proof}
By Theorem 4.1 in \cite{QL21}, there are unitary matrix $U\in \widehat{\mathbb{Q}}^{m\times m}$ and a diagonal matrix $\Sigma \in \widehat{\mathbb{R}}^{m\times m}$ such that $A=U\Sigma U^*$, where $\Sigma={\rm diag}(\lambda_1,\lambda_2,\ldots,\lambda_m)$. Consequently, by Proposition \ref{DQAB-Product}, it holds that ${\rm det}_q[A]={\rm det}_q[U]{\rm det}_q[\Sigma]{\rm det}_q[ U^*]={\rm det}_q[\Sigma]$. Furthermore, by Proposition \ref{DQProp-2}, we have ${\rm det}_q[\Sigma]=\prod_{i=1}^m{\rm det}_q(\lambda_i)=\prod_{i=1}^m|\lambda_i|^2$, since ${\rm det}_q(\alpha)=|\alpha|^2$ for any $\alpha\in \widehat{\mathbb{R}}$. Hence, we obtain the desired result and complete the proof.
\end{proof}

\section{Final Remarks}\label{Conclusion}
In this paper, we studied determinants of dual complex matrices, quasi-determinants of dual quaternion matrices, and the relationship between the right eigenvalues and the quasi-characteristic roots of dual quaternion matrices, as well as some basic properties of determinants and quasi-determinants mentioned above. First, we studied some basic properties of determinants of dual complex matrices, including Sturm theorem and Bloomfield-Watson inequality for dual complex matrices. Then, we showed that every eigenvalue of a dual complex matrix must be the root of the characteristic polynomial of this matrix, however, its reverse proposition does not hold. Furthermore, 
we introduced the concept of quasi-determinants of dual quaternion matrices, and showed that every right eigenvalue of a dual quaternion matrix must be the root of the quasi-characteristic polynomial of this matrix, as well as the quasi-determinant of a dual quaternion Hermitian matrix is equivalent to the product of the square of the magnitudes of all eigenvalues. As a new area of applied mathematics, our results enrich the basic theory of dual quaternion matrices. We also hope that our results may play some important roles in the applications of rigid body motion and multi-agent formation control.

\end{document}